\newtheorem{lemma}{Lemma}[section]
\newtheorem{theorem}[lemma]{Theorem}
\newtheorem{proposition}[lemma]{Proposition}
\newtheorem{cor}[lemma]{Corollary}
\newtheorem{claim*}{Claim}
\theoremstyle{definition}
\newtheorem{remark}[lemma]{Remark}
\newtheorem{notational}[lemma]{Notational Convention}
\newcommand{\C}{{\mathbb C}}
\newcommand{\calC}{{\mathcal C}}
\newcommand{\calD}{{\mathcal D}}
\newcommand{\calF}{{\mathcal F}}
\newcommand{\calG}{{\mathcal G}}
\newcommand{\calK}{{\mathcal K}}
\newcommand{\calM}{{\mathcal M}}
\newcommand{\calO}{{\mathcal O}}
\newcommand{\calU}{{\mathcal U}}
\newcommand{\calX}{{\mathcal X}}
\newcommand{\defeq}{\coloneqq}
\DeclareMathOperator{\rk}{rk}
\DeclareMathOperator{\Hom}{Hom}
\DeclareMathOperator{\Pic}{Pic}
\DeclareMathOperator{\id}{id}
\DeclareMathOperator{\GCD}{GCD}
\numberwithin{equation}{section}
\numberwithin{table}{section}
\def\calC{\mathcal{C}}
\def\calD{\mathcal{D}}
\def\calF{\mathcal{F}}
\def\calG{\mathcal{G}}
\def\calK{\mathcal{K}}
\def\calM{\mathcal{M}}
\def\calO{\mathcal{O}}
\def\calU{\mathcal{U}}
\def\calX{\mathcal{X}}
\def\bC{\mathbf{C}}
\def\bP{\mathbf{P}}
\def\bQ{\mathbf{Q}}
\def\bR{\mathbf{R}}
\def\bZ{\mathbf{Z}}
\begin{document}
\title{Kodaira dimension of moduli of special $K3^{[2]}$-fourfolds of degree 2}
\author{Jack Petok}
\date{January 1, 2023}

\address{Department of Mathematics, Dartmouth College, Kemeny Hall, Hanover, NH 03755}
\email{jack.petok@dartmouth.edu}
\urladdr{http://math.dartmouth.edu/\~{}jpetok}

\begin{abstract}
We study the Noether-Lefschetz locus of the moduli space $\mathcal{M}$ of $K3^{[2]}$-fourfolds with a polarization of degree $2$. Following Hassett's work on cubic fourfolds, Debarre, Iliev, and Manivel have shown that the Noether-Lefschetz locus in $\mathcal{M}$ is a countable union of special divisors $\mathcal{M}_d$, where the discriminant $d$ is a positive integer congruent to $0,2,$ or $4$ modulo 8. We compute the Kodaira dimensions of these special divisors for all but finitely many discriminants; in particular, we show that for $d>224$ and for many other small values of $d$, the space $\calM_d$ is a variety of general type.

{\bf R\'{e}sum\'{e} } On \'{e}tudie le lieu de Noether-Lefschetz dans l'espace de modules $\mathcal{M}$ des vari\'{e}t\'{e}s de type $K3^{[2]}$ munies des polarisations de degr\'{e} $2$. Selon l'approche de Hassett pour les cubiques de dimension quatre, Debarre, Iliev, et Manivel ont \'{e}tablit que ce lieu dans $\mathcal{M}$ est une r\'{e}union des diviseurs sp\'{e}ciaux $\mathcal{M}_d$, o\`{u} le discriminant $d$ est un entier positif congru \`{a} $0,2$, ou $4$ modulo $8$. On calcule les dimensions de Kodaira des diviseurs sp\'{e}ciaux pour presque tous les discriminants; en particulier, on d\'{e}monstre que, pour $d >224$ et des autres petits entiers $d$, l'espace $\calM_d$ est une vari\'{e}t\'{e} de type g\'{e}n\'{e}ral.
\end{abstract}

\maketitle

\section{Introduction}
\label{S:intro}

The aim of this paper is to study the internal geometry of some moduli spaces of hyperk\"{a}hler fourfolds. Let $\calM$ denote the moduli space of complex four-dimensional polarized hyperk\"{a}hler (HK) manifolds of $K3^{[2]}$ type with polarization of degree $2$, the simplest possible polarization degree. The variety $\calM$, quasi-projective and of dimension 20, is also the period space for Gushel--Mukai fourfolds, as well as the period space for EPW double sextics. A very general $X \in \calM$ has the property that $X$ has Picard rank 1. The locus where this property fails is the \textit{Noether-Lefschetz locus} $NL(\calM)$ of $\calM$:
$$NL(\calM) = \{(X,H) \in \calM(\bC) \ \colon \ \rk \Pic X \ge 2\},$$
which is a union of  countably many irreducible divisors known as the  (Noether-Lefschetz)-{\it special divisors} in $\calM$. Our specific goal in this paper is the computation of the Kodaira dimensions of these special divisors.

\subsection{Statement of main theorem} 
\label{ss:mainthms}
Recall that for any HK manifold $X$, the Picard group $\text{Pic X}$ injects (via the exponential exact sequence) into the singular cohomology group $H^2(X, \bZ)$. The \textit{Beauville-Bogomolov form} $q_X \colon H^2(X, \bZ) \to \bZ$ equips $H^2(X, \bZ)$ with the structure of an even integral lattice. A point $p \in \calM$ is represented by a pair $(X,H)$ where $X$ is an HK fourfold of deformation type $K3^{[2]}$ and $H \in \text{Pic}(X)\hookrightarrow H^2(X, \bZ)$ is an ample divisor with $q_X(H) =H^2=2.$
 A polarized HK fourfold $(X,H)$ is said to be \textit{special} if $(X,H)  \in NL(\calM)$. A primitive sublattice $K\subseteq \Pic X$ of rank 2 containing $H$ forms the data of a \textit{special labelling of discriminant $d$} for $X$ (or more precisely, for $(X,H)$), where $d=|D(K_{H^2(X,\bZ)}^\perp)|$ (cf.~\cite[\S 4]{DM}).

For each $d$, there is a moduli space $\calM_d \subset \calM$ of polarized special $K3^{[2]}$-fourfolds of discriminant $d$. The nonempty $\calM_d$ are hypersurfaces in $\calM$, first studied by Debarre, Iliev, and Manivel in~\cite{DIM} as the locus of Hodge structures possessing a special discriminant $d$ labelling in the period domain for prime Fano fourfolds of index $10$ and degree $2$ (such Fano fourfolds are also known as {\it Gushel--Mukai fourfolds}). They prove that the moduli space $\calM_d$ is nonempty if and only if $d \notin \{2,8\}$ and $d \equiv 0, 2, 4 \bmod 8$. Furthermore, the divisor $\calM_d$ is irreducible if $d \equiv 0,4 \bmod 8$ or $d=10$; otherwise, when $d\equiv 2 \bmod 8$, the hypersurface $\calM_d$ of special fourfolds of discriminant $d$ is the union of two irreducible divisors, denoted $\calM_d'$ and $\calM_d''$, which are birationally isomorphic (see~\cite[Theorem 6.1]{DM}).

In this paper, we determine the Kodaira dimension of $\calM_d$ for nearly every value of $d$. We show $\calM_d$ is of general type for almost all $d$:
$$d > 224 \implies \kappa(\calM_d) = 19.$$
 Moreover, we push our methods to determine the Kodaira dimension for many other small values of $d$. Our results, together with the additional inputs to be discussed in~\S\ref{relations}, determine information about the birational type of $\calM_d$ for all but 34 discriminants.\footnote{The 34 discriminants for which we have no information on the Kodaira dimension of $\calM_d$ at the present time are: $12, 16, 18, 24, 28, 32, 36, 40, 42, 48, 50, 52, 56, 58, 60, 64, 66, 68, 72,
74, 76, 80, 82, 84, 90, 92, \newline100, 108, 112, 114, 124, 128, 130, 
176.$}

Our goal is to prove the following theorem:
\begin{theorem}
\label{thm:MainThm}
Let $\calM$ denote the moduli space of hyperk\"{a}hler fourfolds of degree 2 of $K3^{[2]}$-type, and let $\calM_d \subset \calM$ denote the moduli space of special $K3^{[2]}$-fourfolds with a special labelling of discriminant $d$.  
\medskip
\begin{enumerate} 
\item Suppose that $d = 8m$ with $m \ge 11$. Then $\calM_d$ is of  general type for  $m\notin  \{11,12,13,14, \newline 16, 17, 22,25,28\}$.  Furthermore, for $m \notin \{14,16, 22 \}$, the variety $\calM_d$ has nonnegative Kodaira dimension.\medskip

\item Suppose that $d = 8m+2$ with $m \ge 12$. Then $\calM_{d}$ has two birationally isomorphic irreducible components, $\calM_d'$ and $\calM_d''$, both of which are of general type when \newline $m \notin \{12,13,14,15,16,17,21,23\}$. Furthermore, for $m \notin \{14,16\}$, the varieties $\calM_d'$ and $\calM_d''$ have nonnegative Kodaira dimension.\medskip

\item Suppose that $d = 8m +4$ with $m \ge 14$. Then $\calM_d$ is of general type if \newline $m \notin \{15,17,21,25,27\}$. Furthermore, for $m \ne 15$, the variety $\calM_d$ has nonnegative Kodaira dimension.\medskip

\end{enumerate}
\end{theorem}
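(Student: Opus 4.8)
The strategy follows the now-standard approach, pioneered by Gritsenko--Hulek--Sankaran for $\calK_d$ and adapted by Tanimoto--V\'arilly-Alvarado for $\calC_d$, of realizing each $\calM_d$ as a modular variety $\Gamma \backslash \mathcal{D}$ for an appropriate arithmetic group $\Gamma$ acting on a $19$-dimensional type IV bounded symmetric domain, and then producing enough pluricanonical forms by exhibiting a single cusp form of low weight with sufficient vanishing. The first step is lattice-theoretic: for each $d \equiv 0,2,4 \bmod 8$ with $d$ sufficiently large, I would identify the lattice $L_d$ orthogonal to a discriminant-$d$ labelling inside the $K3^{[2]}$-lattice $U^{\oplus 3} \oplus E_8(-1)^{\oplus 2} \oplus \langle -2 \rangle$, so that $\calM_d \cong \Gamma_d \backslash \mathcal{D}(L_d)$ where $\Gamma_d$ is the stable orthogonal group (or the appropriate finite-index subgroup matching the moduli interpretation of~\cite{DM}, accounting for the two components when $d \equiv 2 \bmod 8$). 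The signature of $L_d$ is $(2,19)$.

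The analytic heart is the \emph{low-weight cusp form trick}: the canonical bundle of a smooth model of $\Gamma_d \backslash \mathcal{D}(L_d)$ is (up to boundary and branch corrections) the $19$-th power of the Hodge line bundle $\lambda$, so modular forms of weight $19k$ vanishing to order $k$ along the branch divisor give pluricanonical forms. To get that the space is of general type one needs (i) a nonzero cusp form $F$ of weight $< 19$ for $\Gamma_d$, which via products $F^a \cdot (\text{weight-}19k\text{ forms})$ feeds Kodaira--Iitaka dimension $19$; (ii) control of the ramification divisor of $\mathcal{D}(L_d) \to \Gamma_d \backslash \mathcal{D}(L_d)$, i.e. the reflective divisors coming from $\pm 2$ and other roots, and of the branch locus of the Hirzebruch--Mumford-type resolution of cusps; and (iii) that the obstruction-to-extension (the boundary divisor) is dominated. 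For step (i), the existence of a low-weight cusp form is supplied by the quasi-pullback of the Borcherds form $\Phi_{12}$ on the $\mathrm{II}_{2,26}$ domain: embedding $L_d \into \mathrm{II}_{2,26}$ and taking the quasi-pullback of $\Phi_{12}$ along the finitely many roots orthogonal to the image yields a cusp form on $\mathcal{D}(L_d)$ of weight $12 + \tfrac{1}{2}\#\{\text{such roots}\}$, and one needs this weight to be strictly less than $19$, equivalently the number $N(L_d)$ of roots of $\mathrm{II}_{2,26}$ orthogonal to $L_d$ to be at most $13$. The combinatorial estimate of $N(L_d)$ in terms of $d$ — via the mass formula / bounds on representations of $L_d^\perp$ inside the Leech-type complement — is what forces the bound $d > 224$ and leaves the finitely many exceptional $m$; chasing the nonnegative-Kodaira-dimension statements requires only a weight-$\le 19$ cusp form (weight exactly $19$ with a vanishing condition, or a canonical form directly), which is why those exceptional sets are smaller.

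Concretely, the steps in order: (1) compute $L_d$ and $\Gamma_d$ and reconcile with the $\calM_d', \calM_d''$ decomposition; (2) set up the GHS-type criterion: $\Gamma_d \backslash \mathcal{D}(L_d)$ is of general type provided there is a nonzero cusp form of weight $<19$, the branch divisor contributes no obstruction (handled by Gritsenko--Hulek--Sankaran's general lemmas on reflective vs.\ non-reflective divisors, possibly after passing to a deeper subgroup and using a low-weight cusp form vanishing on the ramification divisor), and one verifies the dimension-$\ge 9$ hypothesis needed to kill boundary contributions; (3) embed $L_d \into \mathrm{II}_{2,26}$ — this requires $L_d$ to have an isometric primitive embedding, checked via Nikulin's existence criteria — and bound the root count $N(L_d)$; (4) take quasi-pullback of $\Phi_{12}$ to produce the required cusp form whenever $N(L_d) \le 13$, giving general type, and analyze the borderline root counts $14 \le N(L_d) \le 25$ (weight $19$ to $24$) to extract the nonnegative-Kodaira-dimension conclusions and to pin down exactly which small $m$ escape each statement. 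The main obstacle is step (3)–(4): bounding $N(L_d)$ sharply enough. This is a delicate lattice-counting problem — one must show that the orthogonal complement of $L_d$ in $\mathrm{II}_{2,26}$ (a negative-definite lattice of rank $7$) represents $-2$ few enough times, and the estimate must be effective and uniform in the congruence class of $d$; it is precisely the places where the bound is not quite good enough that produce the finite list of undetermined discriminants. A secondary but real difficulty is the bookkeeping for $d \equiv 2 \bmod 8$, where one must check the quasi-pullback form is genuinely a cusp form for the subgroup cutting out each of $\calM_d'$ and $\calM_d''$ rather than only for the full orthogonal group.
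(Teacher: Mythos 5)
Your overall skeleton matches the paper: pass from $\calM_d$ to the orthogonal modular variety attached to $K_d^\perp$ (signature $(2,19)$), apply the Gritsenko--Hulek--Sankaran low-weight cusp form criterion, and produce the cusp form as a quasi-pullback of the Borcherds form $\Phi_{12}$ along an embedding $K_d^\perp \into L_{2,26}$. But the decisive step is only gestured at, and in a way that would not work. You propose to bound the number of roots of $L_{2,26}$ orthogonal to the image ``via the mass formula / bounds on representations,'' uniformly in $d$. The root count is not a function of $d$: it depends on the chosen embedding, and different primitive embeddings of the same $K_d^\perp$ have rank-$7$ complements with wildly different numbers of $(-2)$-vectors (the complement can contain large root systems). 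A volume/mass estimate is essentially Ma's argument, and it is exactly what produces bounds of size $\sim 5.5\cdot 10^{16}$, not $224$, and it cannot produce the precise exceptional sets of $m$ in the statement. What is actually needed --- and what the paper does --- is a \emph{construction}: fix $A_1(-1)^{\oplus 2}\into E_8(-1)$ with complement $D_6(-1)$, write the remaining generator as $\ell=\alpha e+\beta f+v$, impose the inequalities $\sqrt{(1+\epsilon)m}<\alpha,\beta<\sqrt{5m/4}$ so that (by the Tanimoto--V\'arilly-Alvarado ``lattice engineering'' lemma) every orthogonal root lies in $D_6(-1)$, and then choose $v$ using Halter-Koch's theorem on sums of three \emph{distinct} coprime nonzero squares so that the number of root pairs is exactly $1$ or $3$. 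The explicit bounds ($m\ge 2055$, $3238$, $10463$ in the three congruence classes) come from optimizing these inequalities, and every smaller $d$ is settled by an exhaustive computer search for embeddings; nothing in your plan replaces that finite search, which is where the exceptional lists come from.

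Two further points you should repair. First, cuspidality and ramification: the quasi-pullback is a cusp form only if the root count is positive, and a priori it is modular only for $\widetilde{O}^+(K_d^\perp)$; the paper arranges the number of root \emph{pairs} $N_\ell$ to be \emph{odd}, so that the form is also $(-\id)$-invariant, hence modular for $\Gamma_d=\langle \widetilde{O}^+(K_d^\perp),-\id\rangle$ with character $\det$, and any such form automatically vanishes on the ramification divisor; your plan never imposes positivity or this parity condition, and without some such device the ramification hypothesis of the GHS theorem is unverified. Second, your range for the nonnegative-Kodaira-dimension part is off: the criterion gives $\kappa\ge 0$ only from a cusp form of weight exactly $n=19$ with character $\det$, i.e.\ exactly $7$ root pairs; weights $20$--$24$ yield nothing, so ``analyzing $14\le N\le 25$'' is not a viable route to those statements.
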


The idea of the proof is to work with the global period domain $\calD_d$, an irreducible quasi-projective variety. The Torelli theorem for $\calM$ shows that $\calM_d$ is a Zariski open subset of $\calD_d$. Then we use automorphic techniques developed by Gritsenko-Hulek-Sankaran in~\cite{GHSInventiones} and~\cite{GHSHandbook} to study the Kodaira dimension of $\calD_d$. This requires the construction of special odd weight modular forms on certain quotients of type IV Hermitian symmetric domains of the form $\widetilde{O}^+(L) \backslash \Omega_L^+$ (see~\S\ref{s:basics} for the relevant definitions). 

We note that by a result of Ma, there are only finitely many even integral lattices $L$ of signature $(2,n)$ such that $\widetilde{O}^+(L) \backslash \Omega_L^+$ is {\it not} of general type (\cite[Theorem 1.3]{Ma}).  Ma's result implies that each nonempty $\calD_d$ is of general type for $d\ge D_0$, where $D_0$ is some constant $D_0 \ge 5.5 \cdot 10^{16}$. In the present work, we find a smaller upper bound, $d_0 = 224$, such that each nonempty $\calD_d$ is of general type for $d > d_0$.

\subsection{Relationship to $\calK_d$ and $\calC_d$}\label{relations} There are 40 values of $d$ for which the techniques used to prove Theorem~\ref{thm:MainThm} do not yield any information about $\calM_d$. However, it is possible to use results on the Kodaira dimension of the moduli space of degree $d$ polarized $K3$ surfaces $\calK_d$ to conclude something about $\calM_d$ for some of these discriminants. For $d=2k$ with $1 \le k \le 13$ or $ k \in \{15, 16, 17, 19\}$, it is known that $\calK_d$ has negative Kodaira dimension, and in fact $\calK_d$ is unirational~(\cite[Theorem 4.1]{GHSHandbook} and~\cite{Nuer}). Since $\calK_d$ dominates $\calM_d$ whenever $d$ is not divisible by a prime $3 \bmod 4$ and $\calM_d \ne \emptyset$~(\cite[Proposition 6.5]{DIM}), we conclude that $\calM_d$ has negative Kodaira dimension and is in fact unirational when $d \in \{4, 10, 20, 26, 34\}.$ 

Similarly, the moduli space $\calC_d$ of special cubic fourfolds of discriminant $d$ dominates $\calM_d$ whenever $d \equiv 2$ or $20 \bmod 24$ and the only odd primes  dividing $d$ are congruent to $\pm 1\bmod 12$~(\cite[Proposition 6.5]{DIM}). The only new information this yields about the Kodaira dimension of $\calM_d$ is that $\calM_{44}$ has negative Kodaira dimension, since $\calC_{44}$ is uniruled by work of Nuer (see~\cite{Nuer}).  
\subsection{EPW double sextics and $\calM_d$}
O'Grady has shown that a general $(X,H)\in \calM$ is a smooth EPW double sextic (see~\cite{O'Grady}). Precisely, there is a Zariski open subset $\calU$ of $\calM$ parametrizing pairs $(X,H)$ with ample and base-point free $H$ such that $|H| \colon X \to \bP^5$ realizes $X$ as a ramified double cover of an EPW sextic in $\bP^5$. We can consider the subvariety $\calU_d=\calM_d \cap \calU \subset \calM$ in $\calU$ parametrizing EPW double sextics which have a special labelling of discriminant $d$. It is possible (see~\cite[Example 6.3]{DM}) that $\dim \calU_d < \dim \calM_d$: if $d=4$ then $\calU_d =\emptyset$, and while $\calU_d$ is known to be nonempty for $d\ge 10$ and $d \equiv 0,2,4 \mod 8$, it is unknown whether $\dim \calU_d = \dim \calM_d$ for such $d$. Still, for $d$ sufficiently large, the variety $\calU_d$ is birational to $\calM_d$ (because $\calU$ is an open subset of $\calM$), and thus we can conclude that $\calU_d$ is of general type for such $d$. It would follow from a conjecture of O'Grady~\cite[Example 6.3]{DM} that $\calU_d$ is birational to $\calM_d$ for all $d\ne 4$.

\begin{cor}
\label{cor:EPWCor}
Let $\calU_d$ denote the moduli space of smooth EPW double sextics that possess a special labelling of discriminant $d$. Then for all sufficiently large $d$ the following hold:
\begin{itemize}
\item If $d\equiv 0,4 \bmod 8$, then the space $\calU_d$ is of general type.
\item If $d \equiv 2 \bmod 8$, then both irreducible components of $\calU_d$ are of general type.
\end{itemize}
\end{cor}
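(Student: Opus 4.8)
The plan is to deduce Corollary~\ref{cor:EPWCor} from Theorem~\ref{thm:MainThm} using the birational invariance of the Kodaira dimension, once we know that $\calU_d$ and $\calM_d$ are birationally equivalent for all sufficiently large $d$. Since $\calU$ is a Zariski open subset of $\calM$, the subvariety $\calU_d = \calM_d \cap \calU$ is Zariski open in $\calM_d$. When $d \equiv 0, 4 \bmod 8$ the divisor $\calM_d$ is irreducible, so $\calU_d$ is either empty or dense in $\calM_d$, and in the latter case the open immersion $\calU_d \hookrightarrow \calM_d$ is a birational map. When $d \equiv 2 \bmod 8$, I would run the same argument separately on each of the two irreducible components $\calM_d'$ and $\calM_d''$: each of $\calU_d \cap \calM_d'$ and $\calU_d \cap \calM_d''$ is open in the corresponding component, hence empty or dense in it.

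The substantive point is therefore to see that $\calU_d$ — respectively its intersection with each component — is nonempty once $d$ is large. For this I would invoke the results of O'Grady and Debarre recalled above: the complement $\calM \setminus \calU$, namely the locus of pairs $(X,H)$ for which $H$ fails to be base-point free or for which $|H|\colon X \to \bP^5$ fails to realize $X$ as a double cover of an EPW sextic, is contained in a finite union $\bigcup_{d' \in S} \calM_{d'}$ of special divisors (see~\cite{O'Grady},~\cite{Debarre}; the vanishing $\calU_4 = \emptyset$ corresponds to $4 \in S$). An irreducible variety contained in a finite union of irreducible divisors is contained in one of them, so for $d$ exceeding every element of $S$ we cannot have $\calM_d \subseteq \calM \setminus \calU$, and hence $\calU_d$ is a dense open subset of $\calM_d$; the same reasoning applies componentwise when $d \equiv 2 \bmod 8$. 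Alternatively, one may simply take as input the assertion recorded in the paragraph preceding the corollary — that $\calU_d$ is birational to $\calM_d$ for $d \gg 0$, with O'Grady's conjecture predicting this for all $d \ne 4$.

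Finally I would combine this with birational invariance: for $d \gg 0$ we get $\kappa(\calU_d) = \kappa(\calM_d)$ (and likewise $\kappa(\calU_d \cap \calM_d') = \kappa(\calM_d')$, $\kappa(\calU_d \cap \calM_d'') = \kappa(\calM_d'')$), while Theorem~\ref{thm:MainThm} shows that $\calM_d$ — respectively each of $\calM_d'$, $\calM_d''$ — is of general type for $d$ large; for instance $d > 224$ is more than sufficient. Since $\calU_d$ (resp.\ its component) is then dense in $\calM_d$ (resp.\ its component) and has the same Kodaira dimension $19 = \dim \calM_d$, it is of general type, which is exactly the two bulleted statements. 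I expect the only genuine obstacle beyond Theorem~\ref{thm:MainThm} to be the birationality $\calU_d \sim \calM_d$ for $d \gg 0$: this is where the projective geometry of EPW sextics enters — as opposed to the automorphic methods used in the rest of the paper — and it is this input, rather than any defect of the general-type argument, that keeps the threshold in the corollary implicit.
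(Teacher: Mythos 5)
Your proposal is correct and takes essentially the same route as the paper: the corollary is deduced from Theorem~\ref{thm:MainThm} via the (ineffective) birationality $\calU_d \sim \calM_d$ for $d \gg 0$, which the paper asserts in the paragraph preceding the statement and which becomes effective ($d>224$) only under O'Grady's conjecture. Your justification of that birationality — the complement $\calM \setminus \calU$ is a proper closed subset, so only finitely many of the irreducible $19$-dimensional divisors (or components, in the case $d \equiv 2 \bmod 8$) can be contained in it — is exactly the implicit content of the paper's claim, and your componentwise treatment for $d \equiv 2 \bmod 8$ matches the intended argument.
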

If O'Grady's conjecture is true, then one can take $d >224$ in Corollary~\ref{cor:EPWCor}, but as of this writing the result remains ineffective.
 \begin{remark}
There is an remarkable geometric association, first appearing in~\cite{IM}, between Gushel--Mukai fourfolds and EPW double sextics, which gives a morphism from the 24-dimensional moduli stack of GM fourfolds to the 20-dimensional moduli stack of EPW double sextics; in particular, the image of a special Gushel--Mukai fourfold of discriminant $d$ is a special EPW double sextic of discriminant $d$~(cf.~\cite{DIM},~\cite{DK}), and hence the image of the locus of special Gushel--Mukai fourfolds lies in $\calU_d$.
 \end{remark}

\subsection{Overview}
In~\S\ref{s:basics} we review some relevant notions about lattices and hyperk\"{a}hler varieties. Then we give the definitions of the moduli spaces $\calM$ and $\calM_d$, and explain how the work of Gritsenko--Hulek--Sankaran determines Kodaira dimension of these varieties provided a modular form can be constructed with special properties. The strategy is to build modular forms using a kind of ``pulling back'' of the Borcherds modular form $\Phi_{12}$. For this, we need to construct special lattice embeddings.

The systematic study of these lattice embeddings is taken up in~\S\ref{S:constructingembeddingsgeneralities}. Here, we use a slightly modified version of the ``lattice engineering'' trick from~\cite[Section 4]{TVACrelle}. We formulate elementary conditions on certain lattice embeddings  from which Theorem~\ref{thm:MainThm} will follow. 

In~\S\ref{S:constructingembeddingsspecifics}, we take up actual construction of these embeddings with the desired properties, breaking our analysis into the cases $d=8m$, $8m+2$, and $8m+4$ (see ~\S\ref{ss:d8m}, ~\S\ref{ss:d8m+2}, ~\S\ref{ss:d8m+4}). We
then reduce the problem of constructing special embeddings to a number theoretic problem concerning the integer valued points on a diagonal quadric. To guarantee the existence of such points for all sufficiently large $d$, we invoke a
classical result of Halter-Koch on the sums of three squares. The final part of the argument deals with the low values of the discriminant $d$ using computer code code written in the {\tt Magma} language~\cite{Magma}, which is provided on the author's webpage.

\section{Basic notions and definitions}
\label{s:basics}
 In this section we define the main objects of the paper, starting with a review of lattice theory in~\S\ref{ss:lattices} and the moduli and periods of our hyperk\"{a}hler fourfolds in~\S\ref{ss:moduliandperiods}. The special divisors $\calM_d$ and $\calD_d$ are discussed in~\S\ref{ss:NL}, and the orthogonal modular varieties $\calF_d$ are discussed in~\S\ref{ss:orthogmodvars}.

 \subsection{Lattices} \label{ss:lattices} (References:\cite{CS},~\cite{SerreCourse}.) An (integral) {\it lattice} is a free $\bZ$-module $L$ of finite rank together with a nondegenerate symmetric $\bZ$-bilinear form 
$$(\cdot, \cdot)\colon L \times L  \to \bZ.$$
The {\it signature} $(r,s)$ of $L$ is the signature of a Gram matrix for $L$. A lattice $L$ is {\it even} if $(x,x) \defeq x^2  \in 2\bZ$ for all $x \in L$. An element $x \in L$ is {\it primitive} if it is not an integral multiple of any other vector in $L$. An $(n)$-root of $L$ is any primitive vector $r$ of square-length $r^2=n$. 

An embedding $L \hookrightarrow M$ of integral lattices is {\it primitive} if the quotient group $M/L$ is torsion-free. The orthogonal complement of $L$ in $M$ will be denoted $L^\perp_M$, or simply $L^\perp$ with the ambient lattice understood from context. To every even integral lattice $L$, there is the associated dual lattice $L^\vee = \Hom(L, \bZ)$ with an embedding $L \hookrightarrow L^\vee$ given by $x \mapsto (x, \cdot)$. The group $D(L) \defeq L^\vee/L$ is a finite abelian group, called the {\it discriminant group}. The natural extension of $(\cdot, \cdot)$ to $L^\vee$ endows $L^\vee$ with a $\bQ$-valued bilinear form,. which in turn gives rise to a $\bQ/2\bZ$-valued bilinear form $b_L$ on $L^\vee/L$, called the {\it discriminant form}. An integral lattice is {\it unimodular} if it has trivial discriminant group. Let $O(L)$ denote the group of automorphisms of $L$ preserving $(\cdot, \cdot)$, and let $\widetilde{O}(L)$ denote the subgroup of automorphisms which preserve the discriminant form; that is,
$$\widetilde O(L) \defeq \ker(O(L) \to O(L^\vee/L)).$$
The group $\widetilde{O}(L)$ is a finite index subgroup of $O(L)$ and is known as the {\it stable orthogonal group}. 
In this work, the notation $(n)$ for a nonzero integer $n$ will denote a rank 1 integral lattice with a generator $x$ of length $n$. Following standard practice, the lattice $A_1$ denotes the lattice $(2)$.  If $L$ is a lattice, then $L(n)$ denotes the lattice with the same underlying abelian group as $L$ with pairing given by 
$$(x,y)_{L(n)} = n\cdot (x,y)_L.$$

 Often, we will write down a lattice by writing down a Gram matrix for a basis of the lattice. The lattices $U$ and $E_8$ denote, respectively, the hyperbolic plane given by the Gram matrix $\begin{pmatrix} 0 & 1 \\ 1 & 0 \end{pmatrix}$, and the unique unimodular positive-definite even lattice of rank 8. Later, when perform explicit computation involving $E_8$, we make use of the Gram matrix for $E_8$~(\cite[Ch 4, \S 8]{CS}):
  $$E_8 \cong \begin{pmatrix} 2 &  0 & -2 & -1 &  0 &  0 &  0 & 0 \\
 0&2&0 &-1& -1&0&0&0 \\
-2&0&4&0&0&0&0&1 \\
-1 &-1&0&2&0&0&0&0 \\
 0& -1&0&0&2 &-1&0&0 \\
 0&0&0&0 &-1&2& -1&0 \\
 0&0&0&0&0& -1&2&0 \\
0&0&1&0&0&0&0&2 
\end{pmatrix}.$$
We also need the ``checkerboard'' lattice $D_6$~(\cite[\S 7]{CS}): let $e_1, \ldots, e_6$ denote the standard basis of $\bZ^6 \subset \bR^6$ with the usual dot product. Then we define an even integral lattice $D_6$ by $D_6=\{\sum c_i e_i \in \bZ^6 \ \colon \ \sum c_i \equiv 0 \bmod 2\} \subset \bZ^6$. The $2$-roots of $D_6$ (i.e. the square-length $2$ vectors) are given by $S \cup -S$, where $S=\{e_i \pm e_j \ \colon \ i \ne j\}$. The dual lattice $D_6^\vee$ is the $\bZ$-span of $\bZ^6$ and the vector $(\frac{1}{2}, \frac{1}{2}, \frac{1}{2}, \frac{1}{2}, \frac{1}{2}, \frac{1}{2})$.

\begin{remark} \label{rem:e8sublatices} If $A_1^{\oplus 2} \hookrightarrow E_8$ a is primitive embedding of lattices, then $(A_1^{\oplus 2})^\perp \cong D_6$. This can be verified by direct computation, first on a single embedding, and then by using that embeddings $A_1^{\oplus 2} \hookrightarrow E_8$ are unique up to isometry (see~\cite[Theorem 1.14.4]{Nikulin}). 
\end{remark}

When $L$ has signature $(2,m)$, we also define the subgroup $O^+(L)$ of automorphisms which preserve the orientation on the positive-definite part of $L$. Note that $O^+(L)$ is a finite index subgroup of $O(L)$ and that $O^+(L)$ acts on the period space for $L$: 
$$\Omega_L^+ \defeq \{x \in \bP(L\otimes \bC) \ \colon \ (x,x)=0, (x,\overline{x})>0 \}^+$$
where the $+$ notation indicates that we are taking one component of the two-component set $\{x \in \bP(L\otimes \bC) \ \colon \ (x,x)=0, (x,\overline{x})>0 \}$ (the two components are exchanged by complex conjugation). For any primitive vector $r\in L$ of square length $r^2<0$,  there is a \textit{rational quadratic divisor} in $\Omega_L^+$ defined by 
$$\Omega^+_{L}(r) \defeq \{Z \in \Omega_L^+ \ \colon \ (Z,r) = 0 \}.$$
We will also need the group
$$\widetilde{O}^+(L) \defeq O^+(L) \cap \widetilde{O}(L)$$
which is a finite index subgroup of the groups $O(L), O^+(L)$, and $\widetilde{O}(L)$, and acts properly and discontinuously on $\Omega_L^+$ (as does any finite index subgroup $\Gamma \subseteq O^+(L)$). 
For a sublattice $K \subset L$, define 
$$O(L,  (K)) =  \{g \in O(L) \ \colon \ g(K) = K\}$$
and define
$$O(L, K) = \{g \in O(L, (K) ) \  \colon \ g|_K = \id_K\}.$$
We will write $O(L,v) \defeq O(L, \bZ v)$ for $v \in L$. One can also define $O^+(L,  (K))$, $\widetilde{O}^+(L, K)$, and so on.

\subsection{Moduli and periods of hyperk\"{a}hler fourfolds of $K3^{[2]}$-type}\label{ss:moduliandperiods} (Reference: \cite{Debarre}). Let $X$ be a complex algebraic variety which is deformation equivalent to the Hilbert scheme $S^{[2]}$ of length-two zero-dimensional subschemes of a $K3$ surface $S$ (a variety {\it of $K3^{[2]}$-type}). Then $X$ is a four-dimensional hyperk\"{a}hler (HK) manifold --- meaning $X$ is simply connected with a nowhere degenerate $2$-form $\omega$ such that $H^0(X, \Omega^2_X) = \bC \omega$. Any HK manifold has $H^r(X, \calO_X) = 0$ for any $r$ odd, so the exponential exact sequence shows that $\Pic X$ injects into $H^2(X, \bZ)$. The second integral singular cohomology also underlies a Hodge structure of weight 2 of $K3$-type. The gives another realization of the Picard group as $\Pic X = H^{1,1}(X) \cap H^2(X, \bZ)$.  

The group $H^2(X, \bZ)$ (and its subgroup $\Pic X$) inherits the structure of a quadratic space  from the \textit{Beauville-Bogomolov-Fujiki (BBF) form} $q_X$, a certain canonically defined nondegenerate integral quadratic form of signature $(3, b_2(X) -3)$. For more on $q_X$ we refer the reader to~\cite{Beauville}. For $S$ a $K3$ surface, the second cohomology with the BBF form $(H^2(S^{[2]}, \bZ), q_S)$ is isomorphic to $H^2(S, \bZ) \oplus \bZ \delta$ with $\delta^2 = -2$. The summand $H^2(S, \bZ)$ is the K3 lattice and carries an intersection form given by the cup product, with $s \cdot s = q(s)$. The class $2\delta$ is corresponds to the divisor in $S^{[2]}$ parametrizing nonreduced subschemes of $S$ of length two.  Since $q(H^2(S^{[2]}, \bZ)) = 2\bZ$, the cohomology group $H^2(S^{[2]}, \bZ)$ has the structure of an even, integral lattice.

 The second integral cohomology with the BBF form is deformation invariant. As $H^2(S, \bZ) \cong   U^{\oplus 3} \oplus E_8(-1)^{\oplus 2} $ for any $K3$ surface $S$, it follows for $X$ a fourfold of $K3^{[2]}$-type that $H^2(X, \bZ)$ is isomorphic to the lattice
$$M = U^{\oplus 3} \oplus E_8(-1)^{\oplus 2}  \oplus (-2).$$
Let $u,v$ denote a null basis for the first copy of $U$  in the decomposition of $M$:
$$u^2=v^2=0, (u,v)=1.$$
Let $u',v'$ denote a null-basis for the second copy of $U$, and let $w$ denote the $(-2)$ factor in the decomposition above.

A {\it polarized} HK fourfold is a pair $(X, H)$ where $H \in \Pic X$ is a primitive, ample divisor with $q(H) =e>0$. The integer $e$ is called the {\it degree} of the polarized fourfold. In this work we consider the lowest possible polarization degree $K3^{[2]}$-type fourfolds, those with degree $e=2$. There is a coarse quasi-projective moduli space $\calM$, which is irreducible and has dimension 20, parametrizing polarized $K3^{[2]}$-type fourfolds of degree 2 up to isomorphism; O'Grady showed that this moduli space is unirational (see~\cite[Theorem 1.1]{O'Grady}). A {\it marking} of an HK fourfold of $K3^{[2]}$-type is an isomorphism 
$$\varphi \colon H^2(X, \bZ) \cong M.$$
Every marking on some $(X,H) \in \calM$ is equivalent, under $O(M)$, to one sending $H$ to $h \defeq u+v$. One computes that 
$$h^\perp = \Lambda \defeq  U^{\oplus 2} \oplus E_8(-1)^{\oplus 2}  \oplus (-2)^{\oplus 2}.$$

We briefly recall some relevant Hodge theory for our degree 2 $K3^{[2]}$-fourfolds. The {\it period} of a point $(X, H) \in \calM$ together with marking $\varphi$ is the line
$$\varphi_\bC(H^{2,0}(X)) \in \Lambda \otimes \bC.$$ 
A period determines, via the Hodge-Riemann relations, a weight 2 Hodge structure on $\Lambda$ of $K3$-type.  The global and local period domains for $\Lambda$ are spaces that parametrize these Hodge structures. There exists a map to the {\it local period domain} $\Omega_\Lambda^+$,
$$\{(X, H, \varphi) \ \colon \ (X,H) \in \calM,  \ \varphi \colon H^2(X, \bZ) \to M, \ \varphi(H) = h\}   \longrightarrow \Omega_\Lambda^+,$$
which sends a triple $(X, H, \varphi)$ to its period; after quotienting out by isomorphism of these triples, one gets a map into the {\it global period domain}
$$\tau \colon \calM \to \calD \defeq \widetilde{O}^+(\Lambda) \backslash  \Omega_\Lambda^+.$$
Applying well-known results of Baily-Borel~\cite{BB}, the arithmetic quotient $\calD$ is a quasi-projective, irreducible, normal variety. Using Markman's computation on the monodromy of $K3^{[n]}$-type manifolds~(\cite[Theorem 1.2]{MarkmanK3n}), we see that the the group $\widetilde{O}^+(\Lambda)$ is the monodromy group generated by parallel-transport operators respecting the polarization.
Hence, by the global Torelli theorem for polarized HK fourfolds, due to Verbitsky and Markman (see~\cite[Theorem 8.4]{Markman}), the morphism $\tau$ is algebraic and is an open immersion. 
We note for later use that 
$$\widetilde{O}^+(\Lambda) = \{\gamma \in O^+(\Lambda) \  \colon \ \gamma \in O(M,h)|_{\Lambda}\},$$
by a result of Nikulin~\cite[Corollary 1.5.2]{Nikulin} (Nikulin's result is about the group $\widetilde{O}(\Lambda)$, but nevertheless yields the above when restricting to the subgroup $\widetilde{O}^+(\Lambda)$).

\subsection{Noether-Lefschetz locus}\label{ss:NL} 
We say that $X$ {\it possesses a special labelling of discriminant $d$} if there exists a primitive sublattice $K \subset \Pic X$ of rank 2 with $H \in K$ such that $|D(K^\perp)| = d$. A very general fourfold $X$ in $\calM$ has $\rk \Pic X = 1$ (see~\cite[Section 5.1]{Zarhin} for a standard argument for this fact) and thus does not possess any special labelling. The following result of Debarre, Iliev, and Manivel classifies all possible special labelling (we are able to employ their result because the nonspecial cohomology lattice of a discriminant $d$ Gushel-Mukai fourfold is isomorphic to the nonspecial cohomology lattice of a discriminant $d$ $K3^{[2]}$ fourfold):

\begin{theorem}\cite[Proposition 6.2]{DIM}
\label{thm:orbits}
 A special sublattice $K$, i.e. a rank 2 sublattice $K \subset M$ with $u+v \in K$ of signature $(1,1)$, must have discriminant $d \equiv 0,2,4 \bmod 8$.
 Furthermore, the orbits of $O^+(\Lambda)$ acting on the set of special rank 2 sublattices are as follows:
\medskip \begin{enumerate}
\item If $d =8m$, there is just one orbit for each $m>0$, represented by $K_d$ with $K_d \cong \begin{pmatrix} 2 & 0 \\ 0 & -2m   \end{pmatrix}$ and $K_d \cap \Lambda = \bZ(u' -mv')$. \medskip

\item If $d = 8m+2$, there are two orbits for each $m>0$, exchanged by an automorphism of $\Lambda$ switching $w$ and $u-v$. Both of these orbits consist of lattices isomorphic to  $\begin{pmatrix} 2  & 0 \\ 0 & -2 - 8m\end{pmatrix}$. One of these orbits has representative $K_d'$ with $K_d' \cap \Lambda = \bZ(u-v + 2u' - 2m v')$. The other has representative $K_d''$ such that $K_d'' \cap \Lambda= \bZ(w + 2u' - 2m v')$.
\medskip

\item If $d =8m+4$, there is just one orbit for each $m>0$. This orbit has a representative $K_d$ with $K_d \cong \begin{pmatrix} 2 & 0\\ 0 & -4 - 8m \end{pmatrix}$, and $K_d \cap \Lambda= \bZ(u-v + w + 2u' - 2mv')$.
\end{enumerate}
\end{theorem}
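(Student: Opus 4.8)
The plan is to translate the problem, via the Torelli-type identification $\Lambda=(h)^\perp_M$, into a classification of primitive vectors in $\Lambda$, and then to read off everything from Eichler's criterion together with a short computation with the discriminant form of $\Lambda$.

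\emph{Step 1: reduction to vectors in $\Lambda$.} Given a special sublattice $K\subset M$, let $v_0\in\Lambda$ be the primitive vector spanning the rank-one group $\bigl((K\cap\Lambda)\otimes\bQ\bigr)\cap\Lambda$; since $K$ has signature $(1,1)$ with $h^2=2>0$, the vector $v_0$ lies in the negative part of $K\otimes\bQ$, so $v_0^2<0$. Conversely any primitive $v_0\in\Lambda$ with $v_0^2<0$ yields a special sublattice (e.g.\ $\bZ h\oplus\bZ v_0$, of Gram matrix $\diag(2,v_0^2)$). Because $h\in K$, one has $K^\perp_M=\{x\in\Lambda\,:\,x\perp v_0\}$, so $d=|D(K^\perp_M)|$ depends only on $v_0$; and an isometry of $M$ fixing $h$ carries $K$ to $K'$ precisely when it carries $v_0$ to $\pm v_0'$. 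By the identity $\widetilde{O}^+(\Lambda)=\{\gamma\in O^+(\Lambda)\,:\,\gamma\in O(M,h)|_\Lambda\}$ recalled in~\S\ref{ss:moduliandperiods}, the group genuinely acting on special sublattices of $M$ is $\widetilde{O}^+(\Lambda)$; the isometry of $\Lambda$ interchanging the two $(-2)$-summands $\bZ w$ and $\bZ(u-v)$ lies in $O^+(\Lambda)$ but \emph{not} in $\widetilde{O}^+(\Lambda)$ (it moves the discriminant form), which is exactly why it can exchange two distinct orbits in case (2). Since $-\id_\Lambda\in\widetilde{O}^+(\Lambda)$, the sign of $v_0$ is immaterial. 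Thus the theorem is equivalent to classifying primitive negative vectors of $\Lambda$ up to $\widetilde{O}^+(\Lambda)$ and identifying $d$ in each case.

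\emph{Step 2: Eichler's criterion and the discriminant form.} As $\Lambda\cong U^{\oplus 2}\oplus E_8(-1)^{\oplus 2}\oplus(-2)^{\oplus 2}$ contains two hyperbolic planes, Eichler's criterion applies: the $\widetilde{O}^+(\Lambda)$-orbit of a primitive $v_0\in\Lambda$ is determined by the pair $\bigl(v_0^2,\ v_0/\!\operatorname{div}(v_0)\bmod\Lambda\bigr)\in\bZ\times D(\Lambda)$. Here $D(\Lambda)\cong(\bZ/2)^2=\langle a\rangle\oplus\langle b\rangle$ with $a=w/2\bmod\Lambda$, $b=(u-v)/2\bmod\Lambda$, and $q_\Lambda(a)=q_\Lambda(b)\equiv-\tfrac12$, $q_\Lambda(a+b)\equiv-1$ in $\bQ/2\bZ$; since $D(\Lambda)$ is $2$-torsion, $\operatorname{div}(v_0)\in\{1,2\}$. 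Write $v_0^2=-2t$. If $\operatorname{div}(v_0)=1$ the class is $0$, and the standard identity $|D(v_0^{\perp_\Lambda})|=|D(\Lambda)|\cdot|v_0^2|/\operatorname{div}(v_0)^2$ gives $d=8t\equiv 0\bmod 8$. If $\operatorname{div}(v_0)=2$, write $v_0=2\xi$ with $\xi\in\Lambda^\vee\setminus\Lambda$; then $v_0^2=4\xi^2$ and $\xi^2\bmod 2\bZ=q_\Lambda(\xi\bmod\Lambda)$, which forces $v_0^2\equiv-2\bmod 8$ when $\xi\bmod\Lambda\in\{a,b\}$ (i.e.\ $t\equiv1\bmod4$) and $v_0^2\equiv-4\bmod 8$ when $\xi\bmod\Lambda=a+b$ (i.e.\ $t\equiv2\bmod4$); in both cases $d=2t$, hence $d\equiv 2\bmod 8$ in the first and $d\equiv 4\bmod 8$ in the second. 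This proves the congruence assertion, and it shows that the admissible pairs are: exactly one for each $d=8m$ (the pair with $t=m$ and class $0$); exactly two for each $d=8m+2$ (the pairs with classes $a$ and $b$, interchanged by $a\leftrightarrow b$); exactly one for each $d=8m+4$ (the pair with class $a+b$). Since the class $v_0/\!\operatorname{div}(v_0)\bmod\Lambda$ is a $\widetilde{O}^+(\Lambda)$-invariant while Eichler gives transitivity on each admissible pair, the orbit counts are $1$, $2$, $1$.

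\emph{Step 3: representatives and the main obstacle.} To finish, one must check each admissible pair is attained and matches the stated representative: I would verify directly that $v_0=u'-mv'$ (resp.\ $u-v+2u'-2mv'$ and $w+2u'-2mv'$; resp.\ $u-v+w+2u'-2mv'$) is primitive in $\Lambda$, has $v_0^2=-2m$ (resp.\ $-2-8m$; resp.\ $-4-8m$), compute $\operatorname{div}(v_0)$ and the class in $D(\Lambda)$, and confirm these agree with Step 2, so that the corresponding $K_d$ has the asserted Gram matrix, $K_d\cap\Lambda=\bZ v_0$, and $d=|D(v_0^{\perp_\Lambda})|$; the two $d=8m+2$ representatives carry the classes $b$ and $a$, hence lie in distinct orbits. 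The conceptual content is entirely in Step 1 and the appeal to Eichler; everything else is bookkeeping. The step requiring the most care is the discriminant-form computation of Step 2 — one must handle $\operatorname{div}(v_0)$ correctly, use the formula for $|D(v_0^{\perp_\Lambda})|$ (valid because $v_0$ and its orthogonal complement are primitive), and be precise about the $\bQ/2\bZ$-valued constraint $v_0^2\equiv 4q_\Lambda(v_0/2)\bmod 2\bZ$, which is exactly what excludes $d\equiv 6\bmod 8$ and isolates the three congruence classes — and one must keep in mind that ``$O^+(\Lambda)$'' in the statement means the subgroup $\widetilde{O}^+(\Lambda)$ that actually acts on sublattices of $M$, without which the count in the $d\equiv 2\bmod 8$ case would collapse to one. (An alternative route avoiding Eichler is to enumerate primitive embeddings $K\hookrightarrow M$ by Nikulin's theory, which is short since $D(M)\cong\bZ/2$; the vector-in-$\Lambda$ formulation is cleaner for exhibiting the explicit $K_d$.)
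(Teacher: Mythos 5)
The paper gives no proof of this statement --- it is imported verbatim from~\cite{DIM} --- so your argument can only be compared with the cited source, and there (as in Debarre--Macr\`i) the classification is obtained by essentially the route you take: pass to the primitive vector $v_0$ generating the saturation of $K\cap\Lambda$ and apply Eichler's criterion in $\Lambda\supset U\oplus U$. Your computations check out: $D(\Lambda)\cong(\bZ/2)^2$ with $q$-values $-\tfrac12,-\tfrac12,-1$ on the nonzero classes, $\operatorname{div}(v_0)\in\{1,2\}$, the identity $d=|D(\Lambda)|\,|v_0^2|/\operatorname{div}(v_0)^2$ yields $d\equiv 0,2,4\bmod 8$ and excludes $6\bmod 8$, the admissible invariant pairs give orbit counts $1,2,1$, and the listed representatives have the claimed square, divisibility $2$ (resp.\ $1$), discriminant class, and hence the claimed $d$. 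You are also right that the group in the statement must be read as $\widetilde{O}^+(\Lambda)=O^+(M,h)|_\Lambda$ (as the paper itself uses in \S\ref{ss:NL}): the swap of $w$ and $u-v$ lies in $O^+(\Lambda)$, so under the literal $O^+(\Lambda)$ the two orbits in case (2) would coalesce. One point in your Step 1 deserves explicit care: your dictionary silently takes $K=\bZ h\oplus\bZ v_0$, which is the convention that reproduces the stated Gram matrices, but such a $K$ need not be saturated in $M$ --- for $v_0=u-v+2u'-2mv'$ one has $(h+v_0)/2=u+u'-mv'\in M$, so the saturation has Gram matrix $\begin{pmatrix}2&1\\1&-2m\end{pmatrix}$ rather than $\diag(2,-2-8m)$, and an arbitrary rank-$2$ sublattice containing $h$ is not determined by $v_0$ at all (e.g.\ $\bZ h\oplus\bZ kv_0$ all have the same $d$). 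So before asserting that orbits of special sublattices biject with orbits of $\pm v_0$, you should fix the convention that $K$ is generated by $h$ and the primitive generator of $K\cap\Lambda$ (which is what the statement's Gram matrices presuppose); with that said, your Steps 2--3 do prove the statement, and none of this affects the congruences, the orbit counts, or the representatives $K_d\cap\Lambda$ that the rest of the paper actually uses.
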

Using~\cite[Corollary 1.5.2]{Nikulin} once again, we observe that
$$\widetilde{O}^+(\Lambda, K_d \cap \Lambda)|_{K_d^\perp} = O^+(M, K_d)|_{K_d^\perp} = \widetilde{O}^+(K_d^\perp)$$
and
\begin{equation} \label{eqn:gammaddef}\Gamma_d \defeq \widetilde{O}^+(\Lambda, (K_d \cap \Lambda))|_{K_d^\perp} = (O^+(M, h) \cap O^+(M, (K_d)))|_{K_d^\perp} = \langle \widetilde{O}^+(K_d^\perp), -\id_{K_d^\perp} \rangle.\end{equation}
In particular, the group $\widetilde{O}^+(K_d^\perp)$ is an index  2 subgroup of $\Gamma_d$.  

We define the divisor $\calD_d \subset \calD$ for each $d \equiv 0,2,4$ as in Theorem~\ref{thm:orbits} as follows:
For $d \equiv 0,4 \bmod 8$, define 
$$\Omega_d^+ \defeq \{ \omega \in  \Omega_\Lambda^+ \ \colon \ \omega^\perp  \supseteq K_d \cap \Lambda \};$$
Then $\calD_d$ is the image of $\Omega_d^+$ under the projection map $\Omega_\Lambda^+ \to \calD_\Lambda$, and is an irreducible divisor. We define $\calM_d$ to be $\calM_d \defeq \tau^{-1}(\calD_d)$; when nonempty, this is a divisor in $\calM$. Note that $\calM_d$ parameterizes the $(X,H) \in \calM$ that possess a special labelling of discriminant $d$. For $d \equiv 2 \bmod 8$, the irreducible divisors $\calD_d', \calD_d'' \subseteq \calD$ and $\calM_d \subseteq \calM$ are similarly defined. 

The following theorem of Debarre and Macr\`{i}, a consequence of \cite[Proposition 4.1 and Theorem 6.1]{DM}, gives the image of $\tau$:

\begin{theorem}[Debarre-Macr\`{i}]
\label{thm:image}
The image of the Torelli map $\tau \colon \calM \to \calD$ meets exactly the following divisors ($d>0$):
\medskip
\begin{enumerate}
\item If $d \equiv 0,4 \bmod 8$, the image meets $\calD_d$ except for $d = 4$ and $d=8$.
\medskip
\item If $ d\equiv  2 \bmod 8$, the image meets $\calD_d'$ and $\calD_d''$, except for: $d=2$, and one of $\calD_d', \calD_d''$ for $d=10$.
\end{enumerate}
\end{theorem}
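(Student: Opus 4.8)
Since the Verbitsky--Markman Torelli theorem makes $\tau$ an open immersion, $\tau(\calM)$ is a Zariski-open, $\widetilde{O}^+(\Lambda)$-invariant subset of $\calD$, so $\calD\setminus\tau(\calM)$ is a finite union of Heegner divisors and the only question is which ones. By the surjectivity of the period map for \emph{marked} hyperk\"ahler manifolds of $K3^{[2]}$-type (Huybrechts), every $\omega\in\Omega_\Lambda^+$ is induced by a marked fourfold $(X,\varphi)$; since birational hyperk\"ahler manifolds have canonically identified weight-$2$ Hodge structures, $\omega$ descends into $\tau(\calM)$ if and only if $\varphi^{-1}(h)$ is ample on \emph{some} hyperk\"ahler fourfold birational to $X$ --- equivalently, if and only if $h$ lies in the interior of one of the K\"ahler-type chambers into which the positive cone of the Hodge structure $\omega$ is subdivided. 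So I must decide, Heegner divisor by Heegner divisor, whether $h$ fails to be ample on every model for the generic associated Hodge structure.

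\textbf{The K\"ahler-cone dictionary.}
Next I would invoke the description of K\"ahler and movable cones of $K3^{[2]}$-type manifolds due to Markman and to Bayer--Hassett--Tschinkel (see also Mongardi): up to the monodromy action, the walls of the chamber decomposition are the hyperplanes $\lambda^\perp$ for primitive $\lambda$ with $\lambda^2=-2$, or with $\lambda^2=-10$ and $\divv(\lambda)=2$, and the walls that actually bound the movable cone --- those supported on a prime exceptional divisor --- form a distinguished subclass among them. On a (component of a) Heegner divisor $\calD_d$ the generic Hodge structure has N\'eron--Severi lattice $K_d$ and $h^\perp\cap K_d=K_d\cap\Lambda=\bZ x_d$, cyclic, generated by the explicit vector $x_d$ of Theorem~\ref{thm:orbits}. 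Hence the generic $\omega$ over $\calD_d$ fails to lie in $\tau(\calM)$ in exactly two ways: \textbf{(a)} $x_d$, regarded in $M$, is itself a wall class --- $x_d^2=-2$, or $x_d^2=-10$ with $\divv_M(x_d)=2$ --- so that $h$ lies on a wall; or \textbf{(b)} $K_d$ contains a class $e$ spanning a wall that bounds the movable cone, with $(e,h)\neq 0$, and the \emph{effective} one of $\pm e$ is the one pairing negatively with $h$, so that $h$ lies strictly outside the movable cone of every birational model.

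\textbf{Lattice bookkeeping.}
Using the generators $x_d$ of Theorem~\ref{thm:orbits} together with Nikulin's discriminant-form calculus and Eichler's transitivity theorem (valid here since $\Lambda$ contains a copy of $U$), I would run through the three families. For $d=8m$ one has $x_d=u'-mv'$, with $\divv_M(x_d)=1$ and $x_d^2=-2m$, so alternative (a) occurs exactly when $m=1$, i.e.\ at $d=8$. For $d=8m+2$ the two $\widetilde{O}^+(\Lambda)$-orbits are distinguished by $\divv_M(x_d)\in\{1,2\}$, both with $x_d^2=-2-8m$; the divisibility-$1$ orbit is a $(-2)$-wall class only when $m=0$, and the divisibility-$2$ orbit is of wall type $(-2,2)$ or $(-10,2)$ only when $m\in\{0,1\}$, so alternative (a) removes both components of $\calD_2$ and exactly one component of $\calD_{10}$. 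For $d=8m+4$ one has $\divv_M(x_d)=1$ and $x_d^2=-4-8m$, never $-2$ or $-10$, so alternative (a) never fires and any obstruction is of type (b); the remaining exception is precisely the one produced this way. Indeed, for $\calD_4$, where $K_4\cong\begin{pmatrix}2&0\\0&-4\end{pmatrix}$, the class $e=h+x_4$ has $e^2=-2$, $\divv_M(e)=2$, and spans a wall bounding the movable cone, with $(e,h)=2\neq 0$; a local period computation --- the delicate input of~\cite[Proposition 4.1]{DM} --- shows that on the connected domain $\Omega^+_{K_4}$ it is $-e$, not $e$, that is effective, so $h$ falls outside the movable cone of every model and $\calD_4$ is missed. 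One then verifies that in the remaining cases of the third family, and for the remaining components of the first two, either $K_d$ carries no such movable-cone wall class, or its effective representative pairs positively with $h$, and then $h$ lands in the interior of a chamber, so $\calD_d$ is met. Assembling the three families gives exactly the asserted list.

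\textbf{The main obstacle.}
The soft reductions and alternative (a) are routine once the Markman--Bayer--Hassett--Tschinkel structure theory is available. The real work is alternative (b): on a Heegner divisor where $h$ lies on no wall but the N\'eron--Severi lattice $K_d$ nonetheless carries a prime-exceptional class $e$, one must determine which side of the divisorial wall $e^\perp$ the polarization $h$ lies on for the \emph{generic} Hodge structure --- equivalently, whether $e$ or $-e$ is effective on a generic member, equivalently whether some birational model sees $h$ as a polarization at all. Settling this requires pinning down the orientation of the positive and effective cones of an explicit generic member rather than any formal argument, and it is exactly this step --- carried out in~\cite[Proposition 4.1 and Theorem 6.1]{DM} --- that accounts for the low-discriminant exception $d=4$ and, together with the divisibility dichotomy in alternative (a), for the asymmetry between the two components of $\calD_{10}$.
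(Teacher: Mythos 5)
The paper offers no proof of this statement to compare against: it is imported wholesale from Debarre--Macr\`{i} (\cite{DM}, Proposition 4.1 and Theorem 6.1). So your argument has to stand on its own. Your alternative (a) is the correct mechanism: combining the Verbitsky--Markman Torelli theorem, surjectivity of the unpolarized period map, and the Mongardi/Bayer--Hassett--Tschinkel description of wall classes for $K3^{[2]}$-type ($\kappa^2=-2$, or $\kappa^2=-10$ with $\divv\kappa=2$), a period generic on a component of $\calD_d$ lies outside the image exactly when $h$ is orthogonal to a wall class, i.e.\ exactly when the generator $x_d$ of $K_d\cap\Lambda$ from Theorem~\ref{thm:orbits} is itself of wall type; your bookkeeping then correctly isolates $d=2$, $d=8$, and the divisibility-two component of $\calD_{10}$.

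Your alternative (b), however, is not a genuine obstruction, and this is a real gap. Whether $e$ or $-e$ is effective is not determined by the period: the reflection $\sigma_e$ in a prime exceptional $(-2)$-class is a monodromy operator that acts trivially on $e^{\perp}$ (hence on the transcendental lattice, so it fixes the period) and interchanges $e$ and $-e$; equivalently, the exceptional chambers outside the movable cone are again K\"ahler cones of suitably re-marked pairs. Since $\calM$ does not remember a marking, ``$h$ lies outside the movable cone of the model you first wrote down'' can always be repaired so long as $h$ lies on no wall; orthogonality to a wall class is the only obstruction. Concretely, the exclusion of $\calD_4$ that (b) is designed to produce is false: for $S$ a general quartic $K3$ (containing no line), the span map $S^{[2]}\to\Gr(2,4)$ is finite, so $h=f-\delta$ is ample, and $\Pic(S^{[2]})=\langle f-\delta,\,f-2\delta\rangle\cong K_4$, with $f-2\delta$ of square $-4$ and divisibility $2$ in $h^{\perp}$, hence in the orbit of $K_4\cap\Lambda$ by Eichler's criterion; these periods are dense in $\calD_4$, so $\calD_4$ is met. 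This agrees with the nonemptiness criterion of \cite{DIM} quoted in the introduction ($\calM_d\neq\emptyset$ iff $d\notin\{2,8\}$) and with the paper's own later assertion that $\calM_4$ is birational to $\calD_4$; the exception ``$d=4$'' in the statement as printed is an internal inconsistency of the paper (the Debarre--Macr\`{i} result excludes only $\calD_2$, $\calD_8$, and one component of $\calD_{10}$), and your step (b) manufactures an unsound mechanism to justify it--including an unsupported appeal to \cite{DM} for the claim that $-e$ is effective--rather than detecting that it cannot hold.
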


To prove Theorem~\ref{thm:MainThm}, it suffices to compute the Kodaira dimension for $\calD_d$, since $\calM_d$ and $\calD_d$ are birational. 

\begin{notational}
For $d \equiv 2 \bmod 8$, we will set $\calD_d = \calD_d'$, as we only care about Kodaira dimension, and $\calD_d'$ is isomorphic to $\calD_d''$. We will also set $K_d = K_d'$. 
\end{notational}

\subsection{Orthogonal modular varieties}
\label{ss:orthogmodvars}
Let us now relate $\calD_d$ via a birational map to an {\it orthogonal modular variety}, that is, a quotient of the form $\Gamma \backslash \Omega_L^+$ for any $\Gamma \subseteq O^+(L)$ of finite index. Our approach to finding an appropriate orthogonal modular variety $\calF_d$ birational to $\calD_d$ is inspired by Hassett's work~(\cite{HassettThesis},~\cite{HassettCompositio}) on the analogous problem for special cubic fourfolds, which is lucidly explained in~\cite{Huybrechts} and in~\cite{Brakkee}. Then we discuss how to apply the low-weight cusp form trick.

Recall that $K_d^\perp$ denotes the orthogonal complement (in $M$) of the representative $K_d$ given in Theorem~\ref{thm:MainThm}. We defined~\eqref{eqn:gammaddef} a group $\Gamma_d \subset O^+(K_d^\perp)$ which contains $\widetilde{O}^+(K_d^\perp)$ as an index 2 subgroup. We have natural morphisms of algebraic varieties:
\begin{equation}  \label{eqn:threequotients1} \calG_d \defeq \widetilde{O}^+(K_d^\perp)  \backslash \Omega^+_{K_d^\perp} \to \calF_d \defeq \Gamma_d \backslash \Omega^+_{K_d^\perp}  \to  \widetilde{O}^+(\Lambda) \backslash \Omega^+_{\Lambda} = \calD \end{equation}
By definition, the image of the second morphism in~\eqref{eqn:threequotients1} is $\calD_d$, so we may rewrite these morphisms as
\begin{equation} \label{eqn:threequotients2} \calG_d \xrightarrow{\phi} \calF_d \xrightarrow{\psi} \calD_d. \end{equation}
The variety $\calG_d$ parametrizes {\it marked} special weight 2 Hodge structures of $K3$ type on $K_d^\perp$ (a Hodge structure on $K_d^\perp$ together with the data of a lattice embedding $K_d \hookrightarrow M$) , while $\calF_d$ parametrizes {\it labelled} weight 2 Hodge structures of $K3$ type on $K_d^\perp$ (Hodge structures on $M$ together with the data of the {\it image} of a lattice embedding $K_d \hookrightarrow M$). 
\begin{remark} We note that since $-\id$ acts as the identity on $\Omega^+_{K_d^\perp}$, we have that $\calF_d = \calG_d$. We choose to work with $\calF_d$ to avoid the potential issues due to irregular cusps (although this only happens when $d=32$, see~\cite{MaCusps}), and because the property that $-\id \in \Gamma_d$ will be useful in~\S\ref{ss:gammadmodularity}.
\end{remark}

The next proposition, whose proof we mirror on similar arguments appearing in~\cite[Corollary 2.5]{Huybrechts} and~\cite{Brakkee}, has the key consequence that the morphism $\psi$ appearing in~\eqref{eqn:threequotients2} is generically injective:
\begin{proposition}
\label{prop:threequotients}
The morphism $\psi$ is the normalization of $\calD_d$.
\end{proposition}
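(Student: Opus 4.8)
The plan is to realize $\psi \colon \calF_d \to \calD_d$ as a finite, birational morphism onto its image, with $\calF_d$ normal; since $\calD_d$ is a variety (an integral scheme), this forces $\psi$ to be the normalization. First I would record that $\calF_d = \Gamma_d \backslash \Omega^+_{K_d^\perp}$ is normal: it is an arithmetic quotient of a bounded symmetric domain by a group acting properly discontinuously, so by Baily--Borel it is a normal quasi-projective variety (this is the same input used for $\calD$ in~\S\ref{ss:moduliandperiods}). So the only real content is that $\psi$ is finite and generically injective.

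The key step is to identify the map on $\bC$-points. A point of $\calF_d$ is a $\Gamma_d$-orbit of a period $\omega \in \Omega^+_{K_d^\perp} \subseteq \Omega^+_\Lambda$, and $\psi$ sends it to its $\widetilde O^+(\Lambda)$-orbit, which lands in $\Omega^+_d/\widetilde O^+(\Lambda) = \calD_d$ by construction. To see $\psi$ is generically injective, suppose $\omega_1, \omega_2 \in \Omega^+_{K_d^\perp}$ are $\widetilde O^+(\Lambda)$-equivalent, say $g\omega_1 = \omega_2$ with $g \in \widetilde O^+(\Lambda)$. For $\omega_1$ very general in $\Omega^+_{K_d^\perp}$ its orthogonal complement inside $\Lambda$ is exactly $K_d \cap \Lambda$ (the Hodge structure has Picard lattice precisely $K_d$); the same holds for $\omega_2$. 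Hence $g$ must carry $K_d \cap \Lambda$ to $K_d \cap \Lambda$, i.e.\ $g \in \widetilde O^+(\Lambda, (K_d \cap \Lambda))$, and by~\eqref{eqn:gammaddef} its restriction to $K_d^\perp$ lies in $\Gamma_d'$, which equals $\Gamma_d$. Thus $\omega_1$ and $\omega_2$ are $\Gamma_d$-equivalent, so $\psi$ is injective over a dense open subset of $\calD_d$. Combined with the fact that $\psi$ is dominant (its image is $\calD_d$ by definition), this gives birationality.

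For finiteness I would argue that $\psi$ factors through the map of arithmetic quotients induced by the inclusion $\Gamma_d \hookrightarrow \widetilde O^+(\Lambda, (K_d\cap\Lambda))$ composed with the inclusion $\Omega^+_{K_d^\perp} \cong \Omega^+_d \hookrightarrow \Omega^+_\Lambda$; the first is a quotient by a finite group (the index $[\widetilde O^+(\Lambda,(K_d\cap\Lambda)) : \Gamma_d]$ is finite since both are arithmetic subgroups of the same orthogonal group of $K_d^\perp$, using $\widetilde O^+(\Lambda, K_d\cap\Lambda)|_{K_d^\perp} = \widetilde O^+(K_d^\perp)$ from the display before~\eqref{eqn:gammaddef}), hence finite, and the second is a closed immersion onto the divisor $\calD_d$ followed by the quotient, so the composite is finite onto $\calD_d$. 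A finite birational morphism from a normal variety onto a variety is, by definition/uniqueness of normalization, the normalization map. The main obstacle I anticipate is the generic-injectivity step: one must verify carefully that for a very general $\omega \in \Omega^+_{K_d^\perp}$ the Picard lattice of the associated Hodge structure is exactly $K_d$ and not something strictly larger, so that an ambient isometry identifying the two Hodge structures genuinely preserves the distinguished rank-$2$ sublattice — this is a countable-union-of-proper-subvarieties argument of the type cited via~\cite{Zarhin}, and one must also handle the possibility that different embeddings of $K_d$ giving the same labelled Hodge structure are already accounted for by $\Gamma_d$ versus $\widetilde O^+(K_d^\perp)$, which is precisely why the $\pm\id$ discrepancy in~\eqref{eqn:gammaddef} must be tracked.
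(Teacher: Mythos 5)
Your proposal takes essentially the same route as the paper: normality of $\calF_d$ via Baily--Borel, finiteness of $\psi$, and degree one because a very general period in $\Omega^+_{K_d^\perp}$ has Picard lattice exactly $K_d$ (the paper invokes the same Zarhin-type very-general argument via $\rk \Pic X = 2$), so a finite birational morphism from a normal variety is the normalization. The one place the paper is more careful is the finiteness step, which it obtains by showing the relevant analytic maps are closed and the fibers finite (quasi-finite and proper, hence finite), whereas you essentially assert finiteness from the factorization; conversely, your generic-injectivity step spells out the labelling-preservation argument in more detail than the paper does.
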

\begin{proof}
We show $\psi$ is finite of degree $1$. We begin by showing the properness of $\psi$: start with observation that the morphisms (in the complex analytic category)
$\Omega_{\Lambda}^+ \to \calD_\Lambda$, $\Omega_{K_d^\perp}^+ \to  \Omega_\Lambda^+$, and $\Omega_{K_d^\perp}^+ \to \calF_d$ are {\it closed}, and that the composition $\Omega_{K_d^\perp}^+ \to   \Omega_\Lambda^+ \to  \calD_d$ is closed as well. Since we can further factor this closed morphism into the composition of two other morphisms with the first being closed,
$$\Omega_{K_d^\perp}^+ \to \calF_d \to \calD_d,$$
it follows that $\calF_d \to \calD_d$ is closed. Since each fiber is a compact set --- indeed a finite set--- this is a proper morphism. 
 Furthermore, as $\psi$ is quasi-finite and proper, it follows that $\psi$ is finite. \

Let $n$ denote the degree of $\psi$, i.e. there is an open set $U \subseteq \calF_d$ such that, for any $x \in U$, the fiber $\psi^{-1}(x)$ has cardinality $n$. Since a very general $(X, H) \in \calM_d$ has $\rk(\Pic X) = 2$ (again by the reasoning in~\cite[Section 5.1]{Zarhin}), a very general fiber must consist of a single point. Therefore, we have $n=1$ and so $\psi$ is a birational morphism. By~\cite{BB}, the variety $\calF_d$ is normal, so $\calF_d$ must be the normalization of $\calD_d$. 
\end{proof}
Since $\psi$ is a birational map, we may conclude 
$$\kappa(\calF_d) = \kappa(\calD_d) = \kappa(\calM_d).$$

To use the low-weight cusp-form trick to compute $\kappa(\calF_d)= \kappa(\calM_d$), we review a little theory of modular forms on orthogonal groups.  Let $L$ be a signature $(2,n)$ lattice with $n\ge 3$, let $\Gamma \subseteq O^+(L)$ be a finite index subgroup, let $\chi \colon \Gamma \to \bC^\times$ be a character, and let $\Omega_L^{+\bullet}$ denote the affine cone over $\Omega_L^+$. A {\it modular form of weight $k$ with character $\chi$ for the group $\Gamma$ } is a holomorphic function $F \colon \Omega_L^{+\bullet} \to \bC$ satisfying the following properties for all $z \in \Omega_L^{+\bullet}$:
\begin{enumerate} \medskip
\item For every $\gamma \in \Gamma$, we have $F(\gamma z) = \chi(\gamma) F(z)$
\medskip
\item For every $t \in \bC^\times$, we have $F(tz) =t^{-k} F(z)$. 
\end{enumerate}
Let us denote by $M_k(\Gamma, \chi)$ the collection of all such modular forms.
A {\it cusp form} is a modular form $F \in M_k(\Gamma, \chi)$ vanishing at the cusps of the Baily-Borel compactification of the variety $\Gamma \backslash \Omega_L^+$, and all such forms form a vector space denoted $S_k(\Gamma, \chi)$. The low-weight cusp form trick is summarized in the following theorem of Gritsenko, Hulek, and Sankaran:
\begin{theorem}(\cite[Theorem 1.1]{GHSInventiones} and~\cite{MaCusps})
\label{thm:cuspformtrick}
Let $L$ be a lattice of signature $(2,n)$ with $n \ge 9$ and $\Gamma \subseteq O^+(L)$ a subgroup of finite index containing $-\id$. The variety $\Gamma \backslash \Omega_L^+$ is of general type if there exists a cusp form $F$ for the group $\Gamma$ with weight $a < n$ and character $\chi$ such that $F$ vanishes along the divisor of ramification of the projection map $ \Omega^+_L \to \Gamma \backslash \Omega^+_L$. If there is a nonzero cusp form of weight $n$  for $\Gamma$ with character $\det$, then $\kappa(\Gamma \backslash \Omega^+_L) \ge 0.$
\end{theorem}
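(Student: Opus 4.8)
Since the statement is quoted from Gritsenko--Hulek--Sankaran, I only sketch the shape of the argument I would give. The plan is to identify modular forms with pluricanonical forms on a smooth projective model of $\calF := \Gamma\backslash\Omega_L^+$ and then produce enough of them. Write $n = \dim\calF$, so $L$ has rank $n+2$ and $\Omega_L^+$ is an open subset of a smooth quadric $Q\subset\mathbb{P}^{n+1}$. Let $\calL$ be the tautological line bundle on $\Omega_L^+$ (the restriction of $\calO_{\mathbb{P}}(-1)$), whose $\Gamma$-equivariant weight-$k$ sections are precisely the modular forms of weight $k$. By adjunction $K_Q=\calO_Q(-n)$, hence $K_{\Omega_L^+}\cong\calL^{\otimes n}$, an isomorphism which is $O^+(L)$-equivariant up to a fixed character. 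Thus a modular form of weight $kn$ with the appropriate character is, on the locus where $\Omega_L^+\to\calF$ is an isomorphism onto the smooth part of $\calF$ minus the branch divisor, a section of $K_{\calF}^{\otimes k}$.

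The first step is then to fix a resolution $Y$ of a toroidal compactification of $\calF$ and determine which weight-$kn$ modular forms extend to global sections of $K_Y^{\otimes k}$. The answer will be: those vanishing along (i) the ramification divisor $R$ of $\Omega_L^+\to\calF$ (to absorb the Hurwitz correction $\pi^*K_{\calF}=K_{\Omega_L^+}-R$), and (ii) the toroidal boundary divisors (to absorb their discrepancies). Granting this, the second step is bookkeeping: given the hypothesised cusp form $F\in S_a(\Gamma,\chi)$ with $a<n$ vanishing on $R$, and any modular form $G$ of weight $k(n-a)$ for $\Gamma$ with complementary character, the product $F^kG$ has weight $kn$; it vanishes to order $\geq k$ along $R$ (since $F$ does to order $\geq 1$) and to order $\geq k$ along the boundary (since $F$ is a cusp form), which for $k$ large beats every discrepancy. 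Hence each such $F^kG$ defines a section of $K_Y^{\otimes k}$. Because spaces of modular forms of weight $w$ for $\Gamma$ have dimension growing like $w^n$ (Hirzebruch--Mumford proportionality / the Baily--Borel volume), we obtain $\dim H^0(Y,K_Y^{\otimes k})\gg k^n$, so $Y$, hence $\calF$, hence $\calM_d$, is of general type. For the last assertion, a single nonzero cusp form of weight $n$ with character $\det$ already extends to a nonzero pluricanonical form, giving $\kappa(\calF)\geq 0$.

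The technical core, and the main obstacle, is the extension criterion (i)--(ii). In the interior one must show $\calF$ has canonical singularities away from the branch divisor: for $\gamma\in\Gamma$ fixing a locus of codimension $\geq 2$ in $\Omega_L^+$ one checks the Reid--Tai sum is $\geq 1$, while codimension-$1$ fixed loci come only from reflections in $(-2)$-vectors --- quasi-reflections, which are exactly the source of the branch divisor and are handled by the hypothesis that $F$ vanish there. The genuinely delicate part is the boundary: one must describe the toroidal compactification over each $0$- and $1$-dimensional cusp of the Baily--Borel boundary, compute the discrepancies of the exceptional and boundary divisors, and control the additional quasi-reflection-type elements occurring in the parabolic stabilisers --- and it is precisely here that the hypothesis $n\geq 9$ is used, guaranteeing that cusp forms vanish fast enough relative to those discrepancies. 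Once these local analyses are in place (they constitute the proof of \cite[Theorem 1.1]{GHSInventiones}), the global argument above yields the theorem.
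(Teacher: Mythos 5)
The paper gives no proof of this statement at all---it is quoted verbatim from Gritsenko--Hulek--Sankaran---and your sketch is a faithful outline of their argument: weight-$kn$ forms as $k$-fold pluricanonical forms via $K_{\Omega_L^+}\cong\calL^{\otimes n}$, the low-weight cusp form $F^kG$ absorbing the ramification divisor and the boundary/discrepancy corrections on a toroidal compactification, and Hirzebruch--Mumford proportionality giving $\dim M_{k(n-a)}(\Gamma)\gg k^n$. One minor inaccuracy in attribution: in GHS the hypothesis $n\ge 9$ enters through the Reid--Tai analysis showing that the quotient and its toroidal compactification have canonical singularities (both for interior quotient singularities and at the cusps), not through a statement that cusp forms vanish fast enough relative to boundary discrepancies---that role is played by the low-weight condition $a<n$.
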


To apply Theorem~\ref{thm:cuspformtrick} to compute the Kodaira dimension of $\Gamma_d  \backslash \Omega^+_{K_d^\perp} $, one needs a supply of modular forms which are modular with respect to $\Gamma_d$. For us, these are provided by {\it quasi-pullbacks} of modular forms with respect to some higher rank orthogonal group, which we now describe. Let $L_{2,26}$ denote the unique even unimodular lattice of signature $(2,26)$:
$$L_{2,26} = U^{\oplus 2} \oplus E_8(-1)^{\oplus 3}$$

It is known ~(\cite{Borcherds}) that $M_{12}(O^+(L_{2,26}), \det)$ is a one-dimensional complex vector space spanned by a modular form $\Phi_{12}$, called the {\it Borcherds form}. The divisor of zeros of $\Phi_{12}$ is the union 
\begin{equation} \label{eqn:div} \text{div}(\Phi_{12}) = \bigcup_{r \in L_{2,26}, r^2=-2}  \Omega^+_{L_{2,26}}(r), \end{equation} 
where $\Omega^+_{L_{2,26}}(r)$ denotes a rational quadratic divisor as in~\S\ref{ss:lattices}, and the order of vanishing of $\Phi_{12}$ is exactly 1 along each such divisor.
Given a primitive embedding of lattices $\iota \colon L \hookrightarrow L_{2,26}$, with $L$ of signature $(2,n)$, let
$$R_{-2}(\iota) \defeq \{r \in L_{2,26} \colon r^2=-2, (r,\iota( L)) = 0\}.$$
When the embedding is clear from context, we may sometimes write $R_{-2}(L)$.
To construct a modular form for some subgroup of $O^+(L)$, one might try using the pullback of $\Phi_{12}$ along the naturally induced closed immersion $ \Omega_{L}^{+\bullet}  \to  \Omega_{L_{2,26}}^{+\bullet}$. But for any $r \in R_{-2}(L)$,  one has $\Omega_{L}^{+\bullet} \subset \Omega_L^+(r)$, and hence $\Phi_{12}$ vanishes identically on $\Omega_{L}^{+\bullet}$. The method of the quasi-pullback, due to Gritsenko, Hulek, and Sankaran, deals with this issue by dividing out by appropriate linear factors:
\begin{theorem}\cite[Theorem 8.2]{GHSHandbook}
\label{thm:quasipullback}
Let $L$ be a lattice of signature $(2,n)$, with $3 \le n \le 26$. Given a primitive embedding of lattices $\iota \colon L \hookrightarrow L_{2,26}$ and the naturally induced embedding $ \Omega_{L}^{+\bullet}  \to  \Omega_{L_{2,26}}^{+\bullet}$, the set $R_{-2}(L)$ of $(-2)$-vectors of $L_{2,26}$ orthogonal to $L$ is a finite set. The {\it quasi-pullback} of $\Phi_{12}$ with respect to this embedding
 $$\Phi|_{\iota(L)} \defeq \frac{\Phi_{12}(Z)}{\prod_{r \in R_{-2}(L)/\pm1} (Z, r)} |_{ \Omega_{L}^{+\bullet} }$$
is a nonzero modular form in $M_{N(\iota(L)) + 12}(\widetilde{O}^+(L), \det)$ where $N(\iota(L)) \defeq \# R_{-2}(\iota)/2$. If $N(\iota(L)) > 0$, then $\Phi|_{\iota(L)}$ is a cusp form.
\end{theorem}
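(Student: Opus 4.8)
The plan is to verify the three parts of the statement in order: finiteness of $R_{-2}(\iota)$; holomorphy and non-vanishing of the renormalized function on $\Omega^{+\bullet}_{L}$ together with its modularity of weight $N(\iota(L))+12$ and character $\det$ for $\widetilde O^+(L)$; and finally cuspidality when $N(\iota(L))>0$. Throughout write $L^\perp$ for $\iota(L)^\perp_{L_{2,26}}$ and $N=N(\iota(L))$. Finiteness is a signature count: since $\iota$ is primitive, $L\oplus L^\perp$ has finite index in $L_{2,26}$, so $L^\perp$ has signature $(0,26-n)$ and is negative definite; a negative definite lattice has only finitely many vectors of square length $-2$, as these lie in the intersection of a fixed sphere with a discrete set, and $R_{-2}(\iota)=\{r\in L^\perp : r^2=-2\}$.

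For holomorphy and non-vanishing I would use Borcherds' description~\cite{Borcherds} of $\divv(\Phi_{12})$ as the \emph{reduced} divisor $\sum_{r^2=-2}\Omega^+_{L_{2,26}}(r)$; for each such $r$ the linear form $Z\mapsto(Z,r)$ is a defining equation for $\Omega^+_{L_{2,26}}(r)$, which is a smooth hypersurface along which $(Z,r)$ vanishes to order one because $r^2\neq 0$. Since $\Omega^{+\bullet}_{L}$ spans $\iota(L)\otimes\bC$, a component $\Omega^+_{L_{2,26}}(r)$ contains $\Omega^{+\bullet}_{L}$ if and only if $r\in L^\perp$, i.e.\ $r\in R_{-2}(\iota)$. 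I then pick, by a Baire category argument, a point $Z_0\in\Omega^{+\bullet}_{L}$ lying on none of the countably many proper analytic subsets $\Omega^{+\bullet}_{L}\cap\Omega^+_{L_{2,26}}(r')$ with $r'\notin R_{-2}(\iota)$: near $Z_0$ the holomorphic functions $\Phi_{12}$ and $\prod_{r\in R_{-2}(\iota)/\pm 1}(Z,r)$ have the same reduced divisor inside the complex manifold $\Omega^{+\bullet}_{L_{2,26}}$, hence differ by a nowhere-vanishing holomorphic factor $g$, so $\Phi|_{\iota(L)}=g|_{\Omega^{+\bullet}_{L}}$ is holomorphic and nonzero at $Z_0$, in particular not identically zero. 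At an arbitrary point of $\Omega^{+\bullet}_{L}$, the branches of $\divv(\Phi_{12})$ through it are the finitely many branches coming from $R_{-2}(\iota)$ together with finitely many others, and the denominator cancels exactly the former, so the quotient, and hence its restriction to $\Omega^{+\bullet}_{L}$, stays holomorphic there; thus $\Phi|_{\iota(L)}$ is holomorphic and $\not\equiv 0$ on all of $\Omega^{+\bullet}_{L}$ (well defined up to the sign coming from the choice of representatives $\pm r$).

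The modularity I expect to be formal. Homogeneity of degree $-(N+12)$ is immediate from $\Phi_{12}(tZ)=t^{-12}\Phi_{12}(Z)$ and $\prod_r(tZ,r)=t^{N}\prod_r(Z,r)$. For $\widetilde O^+(L)$-equivariance, the essential point is that every $\gamma\in\widetilde O^+(L)$ extends to an isometry $\tilde\gamma\in O^+(L_{2,26})$ acting as $\id$ on $L^\perp$: the isometry $\gamma\oplus\id_{L^\perp}$ of $L\oplus L^\perp$ preserves the glue group of the overlattice $L_{2,26}$ precisely because $\gamma$ acts trivially on $D(L)$, so Nikulin's gluing theory~\cite{Nikulin} supplies the (unique) extension; one has $\det\tilde\gamma=\det(\gamma\oplus\id_{L^\perp})=\det\gamma$, and $\tilde\gamma\in O^+$ since $L^\perp$ is definite. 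As $\tilde\gamma$ fixes each $r\in R_{-2}(\iota)\subset L^\perp$ and restricts to $\gamma$ on $\iota(L)\otimes\bC$, for $Z\in\Omega^{+\bullet}_{L}$ we get
$$\Phi|_{\iota(L)}(\gamma Z)=\frac{\Phi_{12}(\tilde\gamma Z)}{\prod_r(\tilde\gamma Z,r)}=\frac{\det(\gamma)\,\Phi_{12}(Z)}{\prod_r(Z,r)}=\det(\gamma)\,\Phi|_{\iota(L)}(Z),$$
using $(\tilde\gamma Z,r)=(Z,\tilde\gamma^{-1}r)=(Z,r)$ and the weight-$12$, character-$\det$ modularity of $\Phi_{12}$ for $O^+(L_{2,26})$. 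Hence $\Phi|_{\iota(L)}\in M_{N+12}(\widetilde O^+(L),\det)$.

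The hardest part is cuspidality when $N>0$. I would need to show $\Phi|_{\iota(L)}$ vanishes along every boundary component of the Baily--Borel compactification of $\widetilde O^+(L)\backslash\Omega^+_{L}$, equivalently that the constant term of its Fourier--Jacobi expansion vanishes along every primitive isotropic sublattice of $L$. The approach is to note that each cusp of $\Omega^+_{L}$ lies in a cusp of $\Omega^+_{L_{2,26}}$ and to extract the vanishing from the explicit boundary behaviour of $\Phi_{12}$ there --- its Borcherds product / Fourier--Jacobi expansion: infinitely many of the $(-2)$-divisors $\Omega^+_{L_{2,26}}(r)$ accumulate at such a cusp while only the $N$ coming from $R_{-2}(\iota)$ are divided out in forming $\Phi|_{\iota(L)}$, so as soon as $N\ge 1$ the renormalized form still vanishes there. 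Making this local boundary computation precise and checking it uniformly over all cusps (finitely many up to $\widetilde O^+(L)$-equivalence) is the main obstacle; the rest of the argument is bookkeeping with lattices.
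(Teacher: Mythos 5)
The paper gives no proof of this statement at all: it is quoted as~\cite[Theorem 8.2]{GHSHandbook} and the proof environment is left empty, so the benchmark is Gritsenko--Hulek--Sankaran's own argument. Your treatment of finiteness (negative definiteness of $\iota(L)^\perp_{L_{2,26}}$), of holomorphy and non-vanishing of the quotient (reducedness of $\divv(\Phi_{12})$, the observation that $\Omega^+_{L_{2,26}}(r)\supseteq\Omega_L^{+\bullet}$ exactly when $r\in R_{-2}(\iota)$, and a generic point of $\Omega_L^{+\bullet}$ avoiding the remaining divisors), of the weight count $12+N$, and of $\widetilde{O}^+(L)$-modularity with character $\det$ via the Nikulin extension $\gamma\oplus\id_{L^\perp}$ of $\gamma\in\widetilde{O}^+(L)$ to $O^+(L_{2,26})$ is correct and is essentially the standard argument.

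However, the cuspidality claim for $N>0$ --- the only genuinely delicate part of the theorem --- is not proved. The heuristic you offer (infinitely many $(-2)$-divisors accumulate at each cusp while only the $N$ coming from $R_{-2}(\iota)$ are divided out, ``so as soon as $N\ge 1$ the renormalized form still vanishes there'') is not a valid inference: accumulation of components of $\divv(\Phi_{12})$ at the boundary says nothing about the boundary (constant) Fourier--Jacobi coefficient of the restricted form, and for $N=0$ exactly the same accumulation occurs while the plain pullback need not be cuspidal, so any correct argument must use $N\ge 1$ quantitatively rather than qualitatively. The cited proof instead works with the explicit Fourier expansion (Borcherds product) of $\Phi_{12}$ in a tube-domain realization attached to an isotropic vector or plane of $L$, first embedded into an isotropic vector or plane of $L_{2,26}$: dividing by the $N$ linear forms $(Z,r)$, $r\in R_{-2}(\iota)$, amounts to extracting an $N$-th order Taylor coefficient in the directions normal to $\Omega_L^{+\bullet}$, and one checks that the resulting expansion is supported on vectors of strictly positive norm whenever $N>0$, uniformly over all cusps of $\widetilde{O}^+(L)\backslash\Omega_L^+$. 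That boundary computation is the substance of the theorem and is missing from your proposal; as written, the cuspidality assertion remains unproven.
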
 
We will need modularity with respect to $\Gamma_d$, so we will need to be careful that our quasi-pullbacks are modular with respect to the additional transformation $-\id$. Throughout this paper, when an underlying embedding $\iota \colon K_d^\perp \hookrightarrow L$ is clear from context, we will adopt the notation $\Phi|_{K_d^\perp}= \Phi|_{\iota}$ and $N(K_d^\perp) = N(\iota)$.

Thus, to show that $\kappa(\calM_d)=19$, we will first construct embeddings $\iota \colon K_d^\perp \hookrightarrow L_{2,26}$ such that $0<N(\iota)<7$, and using the quasi-pullback trick this gives a modular form $\Phi |_{\iota(K_d^\perp)}$ of weight $12+N(K_d^\perp)$ (if an embedding of $K_d^\perp$ satisfies $N(K_d^\perp) = 7$, we may still use this embedding in a proof that $\kappa(\calM_d) \ge 0$). These embeddings will automatically be modular with respect to $\widetilde{O}^+(K_d^\perp)$. Still, there is nothing in Theorem~\ref{thm:quasipullback} to guarantee automatically that $\Phi|_{K_d^\perp}$ vanish along the ramification divisor. We will deal with this in~\S\ref{S:constructingembeddingsgeneralities}, where we see how the extra condition that the quasi-pullback is modular with respect to $\Gamma_d$ guarantees this vanishing.

\section{Constructing embeddings: generalities}\label{S:constructingembeddingsgeneralities}
In this section, we begin constructing embeddings $K_d^\perp \hookrightarrow L_{2,26}$ such that $N(K_d^\perp) <7$. Let us first write down the lattices $K_d^\perp$ we are studying. Using the representatives from Theorem~\ref{thm:orbits}, we compute the lattices $K_d^\perp$. The results of this straightforward computation are summarized in the following proposition. We introduce for ease of notation lattices $M_d$ defined by their Gram matrices (see also~\cite[Proposition 6.2]{DIM} and~\cite[Lemma 4.6]{Pertusi}):
$$d=8m, \ M_d \defeq \begin{pmatrix} -2 & 0 & 0 \\ 0 & -2 & 0 \\ 0 & 0 & 2m \end{pmatrix}$$
$$d=8m+2,  \ M_d \defeq \begin{pmatrix} -2 & 0 & 0 \\ 0 & -2 & 1 \\ 0 & 1 & 2m \end{pmatrix}$$
$$d=8m+4,  \ M_d \defeq  \begin{pmatrix} -2 & 0 & 1 \\ 0 & -2 & 1 \\ 1 & 1 & 2m \end{pmatrix}$$

\begin{proposition}
\label{prop:Kdperps}
Let $K_d$ be the representative rank $2$ lattice from Theorem~\ref{thm:orbits}. Then 
$$K_d^\perp \cong M_d \oplus U \oplus E_8^{\oplus 2}(-1).$$
\end{proposition}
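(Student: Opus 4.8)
The plan is to compute the orthogonal complement $K_d^\perp$ directly inside $M = U^{\oplus 3} \oplus E_8(-1)^{\oplus 2} \oplus (-2)$, using the explicit generators for $K_d$ furnished by Theorem~\ref{thm:orbits}. Recall $K_d$ contains $h = u+v$ together with a second primitive vector $\ell_d \in K_d \cap \Lambda$, where in each case $\ell_d$ is an explicit integral combination of $u-v$, $w$, and $u', v'$ (the null-basis of the second copy of $U$). So first I would observe that since $h = u+v$ and both $u-v$ and $w$ lie in $U_1 \oplus (-2)$ (writing $M = U_1 \oplus U_2 \oplus E_8(-1)^{\oplus 2} \oplus (-2)$ with $h \in U_1$), the vectors $u, v, u', v', w$ span a rank-$5$ sublattice isometric to $U^{\oplus 2} \oplus (-2)$, and $K_d$ is contained in it. Therefore $K_d^\perp$ splits off $E_8(-1)^{\oplus 2}$ orthogonally: $K_d^\perp = \big(K_d^{\perp} \text{ in } U^{\oplus 2}\oplus(-2)\big) \oplus E_8(-1)^{\oplus 2}$. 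This reduces the problem to a rank-$3$ computation.

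Next I would carry out that rank-$3$ computation in $N := U^{\oplus 2} \oplus (-2)$, which has rank $5$ and signature $(2,3)$. In each of the three cases one has explicit generators: $K_d = \langle u+v, \ell_d\rangle$. I would find a basis of the rank-$3$ complement of $K_d$ in $N$ by solving the linear conditions $(x, u+v) = 0$ and $(x, \ell_d) = 0$ for $x = a u + b v + c u' + e v' + f w$, then pick a $\bZ$-basis of the solution lattice and write down its Gram matrix. For $d = 8m$: $\ell_d = u' - mv'$, and one checks the complement is generated (up to the routine linear algebra) by $u-v$, $w$, and $u' + mv'$ (or a suitable primitive adjustment), yielding the Gram matrix $\mathrm{diag}(-2,-2,2m)$ up to change of basis. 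For $d = 8m+2$ with $\ell_d = u-v+2u'-2mv'$ (resp.\ $w + 2u'-2mv'$) and $d=8m+4$ with $\ell_d = u-v+w+2u'-2mv'$, the analogous complements produce the matrices $M_d$ as written; the off-diagonal $1$'s arise precisely because the generator involving $w$ or $u-v$ cannot be chosen orthogonal to the remaining generators over $\bZ$. The key point requiring care is \emph{primitivity}: I must verify that the chosen $\bZ$-basis actually spans the full saturation of the complement (i.e.\ $K_d^\perp$ is the \emph{primitive} closure, not a finite-index sublattice), which amounts to checking that a general solution of the two linear equations with integer coefficients, reduced modulo the obvious denominators, still lies in the span of my proposed basis; equivalently, one can cross-check by computing $|D(K_d^\perp)| = d$ from the Gram matrix of $M_d \oplus U \oplus E_8(-1)^{\oplus 2}$ and confirming it matches $|D(K_d)| = d$ (using that $K_d \oplus K_d^\perp$ has finite index in the unimodular-up-to-$(-2)$ lattice $M$, so $|D(K_d)| \cdot |D(K_d^\perp)|$ is a perfect square times $|D(M)|^2 = 4$, and $D(K_d) \cong D(K_d^\perp) \oplus (\text{something})$ forces equality of discriminants since $M/(K_d\oplus K_d^\perp)$ is cyclic — this is the standard gluing argument via \cite[Proposition 1.6.1]{Nikulin}).

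The main obstacle, such as it is, is bookkeeping rather than conceptual: one must handle the three cases in parallel without sign or indexing errors, and in each case correctly identify the \emph{primitive} complement. I expect the discriminant cross-check ($|D(M_d)| = d$ in all three cases: $\det\mathrm{diag}(-2,-2,2m) = 8m$; $\det$ of the $8m+2$ matrix $= -2(( -2)(2m) - 1) = 8m+2$; $\det$ of the $8m+4$ matrix $= 8m+4$, up to sign) to be the cleanest way to confirm no index has been lost, since a proper finite-index sublattice of $K_d^\perp$ would have strictly larger discriminant. Finally I would note $U^{\oplus 2}\oplus(-2)$ has the right signature $(2,3)$ and $M_d$ has signature $(2,1)$, consistent with $K_d^\perp$ having signature $(2, 20)$ as claimed in~\S\ref{ss:moduliandperiods}, which is one more sanity check that the computation is correct.
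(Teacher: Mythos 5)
Your overall route is the intended one (the paper treats this as a direct computation), but as written it proves the wrong statement: you have dropped a hyperbolic plane. The ambient lattice is $M = U^{\oplus 3}\oplus E_8(-1)^{\oplus 2}\oplus(-2)$, of rank $23$ and signature $(3,20)$, not $U_1\oplus U_2\oplus E_8(-1)^{\oplus 2}\oplus(-2)$ as in your decomposition. Since $K_d$ lies in the rank-$5$ piece $U_1\oplus U_2\oplus(-2)$ spanned by $u,v,u',v',w$, the correct splitting is
$$K_d^\perp \;=\; \bigl(K_d^\perp \cap (U_1\oplus U_2\oplus(-2))\bigr)\,\oplus\, U_3 \,\oplus\, E_8(-1)^{\oplus 2},$$
and it is precisely the untouched third copy $U_3$ that supplies the $U$ summand in the proposition. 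Your version, $K_d^\perp=(K_d^\perp\text{ in }U^{\oplus 2}\oplus(-2))\oplus E_8(-1)^{\oplus 2}$, has rank $19$ and would conclude $K_d^\perp\cong M_d\oplus E_8(-1)^{\oplus 2}$, which contradicts the statement (the true $K_d^\perp$ has rank $21$ and signature $(2,19)$). The sanity checks you appeal to would not catch this because they are themselves off: $M_d$ has signature $(1,2)$, not $(2,1)$, and $(2,20)$ is the signature of $\Lambda=h^\perp$, not of $K_d^\perp$.

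The discriminant cross-check is also based on a false premise: $d$ is defined as $|D(K_d^\perp)|$, and $|D(K_d)|\neq d$ in general (from Theorem~\ref{thm:orbits}, $|D(K_d)|=4m=d/2$ when $d=8m$, and $|D(K_d)|=2d$ when $d\equiv 2,4\bmod 8$), so the asserted "equality of discriminants" via gluing is wrong; the correct relation is $|D(K_d)|\cdot|D(K_d^\perp)|=[M:K_d\oplus K_d^\perp]^2\cdot|D(M)|$ with $|D(M)|=2$. Fortunately none of this machinery is needed: if you literally solve the two orthogonality conditions over $\bZ$ inside $U_1\oplus U_2\oplus(-2)$, the solution set is an orthogonal complement and hence automatically saturated, so no separate primitivity or discriminant check is required. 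For example, for $d=8m+2$ one finds the $\bZ$-basis $\{w,\;u-v+v',\;u'+mv'\}$, whose Gram matrix is exactly $M_d$; the other two cases are identical in spirit. With the missing $U_3$ restored and the flawed cross-checks removed, your computation does establish the proposition.
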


Note that in every $M_d$, there is a primitively embedded copy of the lattice $A_1(-1)^{\oplus 2}$ corresponding to the upper-left $2\times 2$ block in the Gram matrix of $M_d$, so from here on we will refer to a sublattice $A \defeq A_1(-1)^{\oplus 2}  \subset M_d$. 
 
We want to consider as many embeddings $K_d^\perp \hookrightarrow L_{2,26}$ as possible. We will label the factors in our decomposition of $L_{2,26}$ as follows:
$$L_{2,26} = U_1 \oplus U_2 \oplus E_8(-1)^{(1)} \oplus E_8(-1)^{(2)} \oplus E_8(-1)^{(3)}.$$
By Nikulin's analog of Witt's theorem (see~\cite[Theorem 1.14.4]{Nikulin}), a primitive embedding $U \oplus E_8(-1)^{\oplus 2} \hookrightarrow L_{2,26}$ is unique up to isometry of $L_{2,26}$, and the same is true for any primitive embedding $A_1(-1)^{\oplus 2} \hookrightarrow U \oplus E_8(-1)$. Thus, without loss of generality, we will from now on assume that all of our embeddings:
\medskip \begin{enumerate}
\item   identify the factor $U \oplus E_8^{\oplus 2}(-1)$ appearing in our decomposition of $K_d^\perp$ in Proposition~\ref{prop:Kdperps} with $U_1 \oplus E_8(-1)^{(1)}  \oplus E_8(-1)^{(2)} \subset L_{2,26}$ ; and
\medskip
\item  Isometrically embed $A_1(-1)^{\oplus 2} \subset M_d$ into $E_8(-1)^{(3)}$. Let $a_1,a_2$ denote the images of generators of the two $A_1(-1)$ summands.
\end{enumerate}
So the problem of writing down embeddings to prove Theorem~\ref{thm:MainThm} is reduced to choosing $\ell \in U_2 \oplus E_8(-1)^{(3)}$ such that $\ell^2=2m$ and
\begin{equation}
\label{eqn:admissible}
\begin{cases}
			(\ell,a_1)=(\ell,a_2)=0 & \textrm{if } d = 8m, \\
			(\ell, a_1) =1, (\ell, a_2) = 0 & \textrm{if } d = 8m + 2 \\
			(\ell, a_1) = (\ell, a_2) = 1 & \textrm{if } d = 8m + 4.		
\end{cases}
\end{equation}
We will say that a vector $\ell =\alpha e + \beta f + v$, where $\{e,f\}$ is a null basis fo $U_2$, $v \in\ E_8(-1)^{(3)}$, and $\ell^2 = 2m$, is {\it admissible for $d$} if one of the three equations in~\eqref{eqn:admissible} holds. Note that if a vector $\ell$ is admissible, there is a unique associated discriminant $d \in \{8m, 8m+2, 8m+4\}$ such that~\eqref{eqn:admissible} is true. For admissible $\ell$ and its associated discriminant $d$, we introduce the following notations:
\begin{itemize}
\item $\iota_\ell \colon K_d^\perp \hookrightarrow L_{2,26}$ is the embedding associated to $\ell$
\item $R_\ell$ is the set $R_{-2}(\iota_\ell(K_d^\perp))$
\item $N_\ell = \#R_\ell/2$.
\item $\Phi_\ell$ is the modular form $\Phi|_{\iota_\ell(K_d^\perp)}$. 
\end{itemize}

\begin{remark}
Every primitive embedding $K_d^\perp \hookrightarrow L_{2,26}$ is isometric to $\iota_\ell$ for some admissible $\ell$---although not every admissible $\ell$ yields primitive $\iota_\ell$. In what follows, we can guarantee that a choice of $\ell$ gives a primitive embedding $\iota_\ell$  whenever $\alpha$ and $\beta$ are coprime.  
\end{remark}
 
For each $d$, we wish to find admissible $\ell$ such that the following hold:
\medskip \begin{enumerate}[(A)]
\item \label{desiderataA} $\iota_\ell$ is primitive and $0<N_\ell<7$ with $N_\ell$ odd (or $0<N_\ell\le 7$ with $N_\ell$ odd if attempting to prove $\kappa(\calM_d) \ge 0$).\medskip
\item \label{desiderataB} $\Phi_\ell$ vanishes along the ramification locus of the projection $\Omega^+_{K_d^\perp} \to \Gamma \backslash \Omega^+_{K_d^\perp}$.
\end{enumerate}
Then we can apply Theorem~\ref{thm:quasipullback} to these embeddings to produce the cusp forms we need to prove Theorem~\ref{thm:MainThm}. The condition that $N_\ell$ is odd will guarantee that the cusp form vanishes along the ramificiation divisior, as we explain later in this section.

The remainder of the paper will be dedicated to the search for admissible $\ell$ with these desired properties.
 
\subsection{Controlling the size of $R_\ell$} The next two lemmas from~\cite[Section 4]{TVACrelle}, which we state in a slightly more general form, will help us count the number of roots $R_\ell$. Recall one of our goals~((\ref{desiderataA}) above) is to keep $N_\ell$ small.
\begin{lemma}
\label{lem:types}
 Let $L=U \oplus E_8(-1)$ where $U = \langle e, f \rangle$ with $e^2=f^2=0$ and $(e,f)=1$, and let $L_0$ be a primitive rank $2$ sublattice of $E_8(-1)$. Let $\ell \in L$ have length $\ell^2=2m$, for some $m>0$ a positive integer, such that $\ell = \alpha e + \beta f + v$ with $\alpha, \beta \in \bZ$ and $v \in E_8(-1)$, and suppose further that $\alpha \ne  \beta$ and $m<\alpha \beta < 2m.$ Let $R_\ell$ denote the finite set
 $$\{r \in U \oplus (L_0)^\perp_{E_8(-1)}\ \colon \ r^2=-2, (r, \ell) = 0 \}.$$
Let $r =\alpha' e + \beta' f + v' \in R_\ell$. Then  $\alpha'\beta'= 0$ and there are three types of vectors $r \in R_\ell$:
\medskip \begin{enumerate}
\item Type I vectors $r= v'$. In this case $\alpha'=\beta'=0$ and $r \in (L_0)^\perp_{E_8(-1)}$. \medskip
\item Type II vectors $r = \alpha' e + v'$, $\alpha' \ne 0$. In this case, $(v, v') \equiv 0 \bmod \beta$. \medskip
\item Type III vectors $r=\beta' f + v'$, $\beta \ne 0$. In this case, $(v,v') \equiv 0 \bmod \alpha$.
\end{enumerate}
\end{lemma}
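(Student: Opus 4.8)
The plan is to set up coordinates so that the conditions $r^2 = -2$ and $(r,\ell) = 0$ become manifestly contradictory unless $\alpha'\beta' = 0$. First I would write $r = \alpha' e + \beta' f + v'$ and $\ell = \alpha e + \beta f + v$ with $v, v' \in E_8(-1)$ (and in fact $v' \in (L_0)^\perp_{E_8(-1)}$ since $r$ lies in the sublattice $U \oplus (L_0)^\perp_{E_8(-1)}$). Using the block-diagonal form of the pairing on $U \oplus E_8(-1)$ one computes
\begin{equation*}
r^2 = 2\alpha'\beta' + (v')^2 = -2, \qquad (r,\ell) = \alpha'\beta + \beta'\alpha + (v,v') = 0.
\end{equation*}
Since $E_8(-1)$ is negative definite we have $(v')^2 \le 0$, so the first equation forces $\alpha'\beta' \le 0$; and since $(v')^2 \in 2\bZ$, we get $\alpha'\beta' \in \{0, -1\}$, with $\alpha'\beta' = -1$ only if $v' = 0$.

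The heart of the argument is to rule out the case $\alpha'\beta' = -1$, i.e. $\{\alpha', \beta'\} = \{1, -1\}$ (up to sign) and $v' = 0$, so that $r = \pm(e - f)$. In that case the second equation reads $\pm(\beta - \alpha) = 0$, contradicting the hypothesis $\alpha \ne \beta$. Hence $\alpha'\beta' = 0$. Once this is established, the three types are just the enumeration of which of $\alpha', \beta'$ vanishes: if both vanish we are in Type I and $r = v' \in (L_0)^\perp_{E_8(-1)}$; if $\alpha' \ne 0$ (so $\beta' = 0$) we are in Type II; if $\beta' \ne 0$ (so $\alpha' = 0$) we are in Type III. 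The congruence statements in Types II and III then come directly from the equation $(r,\ell)=0$: in Type II it becomes $\alpha'\beta + (v,v') = 0$, whence $(v,v') = -\alpha'\beta \equiv 0 \bmod \beta$; symmetrically in Type III, $\beta'\alpha + (v,v') = 0$ gives $(v,v') \equiv 0 \bmod \alpha$.

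I would remark that the hypothesis $\alpha \ne \beta$ is exactly what is needed to exclude $r = \pm(e-f)$, while the inequality $n < \alpha\beta < 2n$ (here $n = m$, matching $\ell^2 = 2m$, so really $m < \alpha\beta < 2m$) is not used in this lemma but is retained from~\cite{TVACrelle} because it will be invoked in the companion counting lemma that follows; I would therefore note that for the present statement only $\alpha \ne \beta$ is required. The main obstacle is essentially bookkeeping: making sure the pairing computation on $U \oplus E_8(-1)$ is carried out with the correct signs and that the primitivity of $L_0$ (which guarantees $(L_0)^\perp_{E_8(-1)}$ has the expected rank and that $R_\ell$ is finite) is only used to justify finiteness of $R_\ell$, not the type classification itself. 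There is no deep difficulty here — the entire content is the negative-definiteness of $E_8(-1)$ together with the parity of $(v')^2$, plus the single case analysis excluding the norm $-2$ vector $e - f$ of $U$.
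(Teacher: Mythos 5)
Your computation of $r^2$ and $(r,\ell)$, the exclusion of $r=\pm(e-f)$ via $\alpha\ne\beta$, and the derivation of the congruences in Types II and III are all fine, but there is a genuine gap at the central step: from $2\alpha'\beta'+(v')^2=-2$ and $(v')^2\le 0$ you can only conclude $\alpha'\beta'\ge -1$, not $\alpha'\beta'\le 0$. The case $\alpha'\beta'=k\ge 1$ with $(v')^2=-2-2k\le -4$ is perfectly consistent with negative definiteness and parity (e.g.\ $r=e+f+v'$ with $(v')^2=-4$; such $v'$ exist in $D_6(-1)\subset E_8(-1)$), so your argument does not rule it out, and consequently your closing remark that the hypothesis $m<\alpha\beta<2m$ is ``not used in this lemma'' is wrong. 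That inequality is exactly what kills the case $k\ge 1$: after replacing $r$ by $-r$ one may assume $\alpha',\beta'$ have the sign of $\alpha,\beta$, so orthogonality gives $|(v,v')|=\alpha'\beta+\beta'\alpha\ge 2\sqrt{k\,\alpha\beta}$ by AM--GM, while Cauchy--Schwarz in the negative definite part gives $|(v,v')|\le\sqrt{|v^2|\,|(v')^2|}=2\sqrt{(\alpha\beta-m)(k+1)}$ (using $v^2=2m-2\alpha\beta<0$, which is where $\alpha\beta>m$ enters). Combining, $k\,\alpha\beta\le (k+1)(\alpha\beta-m)$, i.e.\ $\alpha\beta\ge (k+1)m\ge 2m$, contradicting $\alpha\beta<2m$. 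This is precisely the argument of \cite{TVACrelle}*{Lemma 4.1, Remark 4.2}, to which the paper's proof defers with the comment that it relies on Cauchy--Schwarz and negative definiteness; without this step the classification into Types I--III is unproved. (Your observation that the $n$ in the statement should be read as $m$ is correct.)
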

\begin{proof} See~\cite[Lemma 4.1]{TVACrelle} and ~\cite[Remark 4.2]{TVACrelle}. The proof there works for this slightly more general statement, as it only relies on the Cauchy-Schwarz inequality and on the negative definiteness of $L_0$. 
\end{proof}

Imposing slightly stronger inequalities, we get an even stronger statement:
\begin{lemma}
\label{lem:typeIlem}
{\cite[Lemma 4.3]{TVACrelle}}
Suppose we are in the situation of Lemma~\ref{lem:types}, and suppose furthermore that the following three inequalities hold:
$$\alpha > \sqrt{m}, \  \beta > \sqrt{m},  \ \alpha\beta < \frac{5m}{4}.$$
Then every $r \in R_\ell$ is a vector of Type I, i.e. $r \in (L_0)^\perp_{E_8(-1)}$.
\end{lemma}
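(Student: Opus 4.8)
The plan is to apply the trichotomy of Lemma~\ref{lem:types} and then simply rule out the Type~II and Type~III vectors using the Cauchy--Schwarz inequality in the negative-definite lattice $E_8(-1)$, sharpened by the three new inequalities. First I would record the identity coming from expanding $\ell^2$: since $U=\langle e,f\rangle$ and $E_8(-1)$ are orthogonal summands of $L$, writing $\ell = \alpha e + \beta f + v$ with $v \in E_8(-1)$ gives $\ell^2 = 2\alpha\beta + v^2 = 2n$, so $v^2 = 2n - 2\alpha\beta$. The hypotheses $\alpha > \sqrt n$ and $\beta > \sqrt n$ in particular force $\alpha,\beta > 0$, hence $\alpha^2 > n$, $\beta^2 > n$, and $\alpha\beta > n$; thus $v^2 < 0$ and $-v^2 = 2\alpha\beta - 2n > 0$.

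Next I would dispatch the Type~II vectors. Suppose $r = \alpha' e + v' \in R_\ell$ is of Type~II, so $\alpha' \ne 0$ and $v' \in (L_0)^\perp_{E_8(-1)} \subseteq E_8(-1)$. Computing $r^2 = -2$ gives $v'^2 = -2$ (the $U$-part contributes nothing), and the orthogonality $(r,\ell) = 0$ gives $\alpha'\beta + (v',v) = 0$, i.e. $(v',v) = -\alpha'\beta$. Applying Cauchy--Schwarz for the positive-definite form $-(\cdot,\cdot)$ on $E_8(-1)$ yields $(v',v)^2 \le v'^2 v^2 = (-2)(2n-2\alpha\beta) = 4\alpha\beta - 4n$. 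On the other hand, since $\alpha'$ is a nonzero integer, $(v',v)^2 = \alpha'^2\beta^2 \ge \beta^2$. Combining these with $\alpha\beta < \tfrac{5n}{4}$ gives $\beta^2 \le 4\alpha\beta - 4n < n$, contradicting $\beta^2 > n$. The Type~III case $r = \beta' f + v'$ is symmetric, with the roles of $\alpha$ and $\beta$ swapped: there $(v',v) = -\alpha\beta'$, so $\alpha^2 \le (v',v)^2 \le 4\alpha\beta - 4n < n$, contradicting $\alpha^2 > n$. Hence every $r \in R_\ell$ is of Type~I, which by Lemma~\ref{lem:types} means $r \in (L_0)^\perp_{E_8(-1)}$.

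I do not expect a genuine obstacle here: the argument is just the short chain of inequalities above. The only points requiring mild care are the sign bookkeeping in the Cauchy--Schwarz step (the form on $E_8(-1)$ is negative definite, so one applies the inequality to its negation) and the remark that $\alpha,\beta$ must be positive so that the bounds $\alpha^2 > n$ and $\beta^2 > n$ are strict; with those in hand the contradictions are immediate. One could also note for bookkeeping that the hypotheses of this lemma imply those of Lemma~\ref{lem:types} (they give $n < \alpha\beta < 2n$), so the trichotomy is available, though the argument above does not otherwise use $\alpha \ne \beta$.
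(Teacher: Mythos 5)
Your proposal is correct and follows essentially the same route as the paper: both arguments combine the Cauchy--Schwarz bound $(v,v')^2 \le |v'^2|\,|v^2| = 4(\alpha\beta - n) < n$ with the fact that for a Type~II or Type~III vector $(v,v')$ is a nonzero integer multiple of $\beta$ (resp.\ $\alpha$), each of which exceeds $\sqrt{n}$. Your version merely makes the divisibility step explicit via $(v',v) = -\alpha'\beta$, which is a slightly cleaner handling of the same contradiction.
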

\begin{proof}
Let $r=\alpha' e + \beta' f + v' \in R_\ell$. Since $\alpha' \beta'  =0$ by Lemma~\ref{lem:types}, it follows that $(v')^2=-2$. Then by Cauchy-Schwarz,
$$(v,v') \le \sqrt{2} |v^2| = \sqrt{4(\alpha\beta-n)} < \sqrt{4\left(\frac{5n}{4} - n\right)} = \sqrt{n}.$$
But then $(v,v')$ is not divisible by $\alpha$, nor by $\beta$, by the first two inequalities in the hypotheses above. So $r$ is of Type I.
\end{proof}
\begin{remark} In fact, for our embeddings, we will want to impose a stronger condition for $\alpha$ and $\beta$, for some $\rho >0$ to be determined later:
\begin{equation} \label{eqn:inequality}
 \sqrt{(1+\rho)m} < \alpha < \sqrt{\frac{5m}{4}}, \ \sqrt{(1+\rho)m} < \beta < \sqrt{\frac{5m}{4}}
\end{equation}

\end{remark}

\subsection{Modularity with respect to $\Gamma_d$} \label{ss:gammadmodularity}
The quasi-pullback $\Phi_\ell$ along any one of our embeddings is already modular with respect to $\widetilde{O}^+(K_d^\perp)$. Since $\calF_d =\calG_d$, we could simply work with the smaller modular group $\widetilde{O}^+(K_d^\perp) \subset \Gamma_d$ and then verify that $\Phi_\ell$ vanishes along the ramification divisor along the lines of~\cite[Proposition 8.13]{GHSHandbook}. We offer an alternative approach to the vanishing along the ramification divisor using modularity with respect to the larger group $\Gamma_d$. 
\begin{remark}
Our results are unchanged whether we consider modularity with respect to $\widetilde{O}^+(K_d^\perp)$ or $\Gamma_d$, since all modular forms for $\widetilde{O}^+(K_d^\perp)$ computed in the small discriminant range in~\S\ref{S:constructingembeddingsspecifics} are also modular with respect to $\Gamma_d$.
\end{remark}
 We would like to choose $\ell$ such that
 $\Phi|_\ell$ is in addition modular with respect to $-\id \in O(K_d^\perp)$. Then  $ \Phi|_{K_d^\perp}$ will be modular with respect to $\Gamma_d$ since $-\id$ and $\widetilde{O}(K_d^\perp)^+$ generate $\Gamma_d$. But since we already know that $\Phi|_{K_d^\perp}$ is $\widetilde{O}^+(K_d^\perp)$-modular by~\ref{thm:quasipullback}, then we can conclude that 
 $$\Phi_\ell(-\id Z) = \Phi_\ell(-Z) = (-1)^{N_\ell}\Phi_\ell(Z).$$
 As a consequence, we have shown the following important lemma:
 \begin{lemma}
 \label{lem:odd}
Let $\iota\colon L \hookrightarrow L_{2,26}$ be a primitive embedding of lattices as in Theorem~\ref{thm:quasipullback} . Then $\Phi|_L$ is modular with respect to $-\id \in O^+(L^\perp)$ if and only if $N(\iota(L))$ (as defined in Theorem~\ref{thm:quasipullback}) is odd.
\end{lemma}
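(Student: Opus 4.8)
The statement to prove is Lemma~\ref{lem:odd}: for a primitive embedding $L \hookrightarrow L_{2,26}$ as in Theorem~\ref{thm:quasipullback}, the quasi-pullback $\Phi|_L$ is modular with respect to $-\id \in O^+(L^\perp)$ if and only if $N(L)$ is odd. Here I interpret $O^+(L^\perp)$ in the lemma as the relevant orthogonal group acting on $\Omega_L^+$ — indeed $-\id$ induces the identity on $\Omega_L^+$, so ``$\Phi|_L$ modular with respect to $-\id$'' means the scalar by which $-\id$ acts on $\Phi|_L$ is trivial. Wait — more precisely the statement is that $\Phi|_L$ transforms with trivial character under $-\id$, i.e. $\Phi|_L(-\id \cdot Z) = \Phi|_L(Z)$ as functions on the affine cone $\Omega_L^{+\bullet}$, which for the homogeneous function $\Phi|_L$ of weight $N(L)+12$ is the content of the displayed computation.

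\medskip

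Here is the plan. The key input, already established in the text just before the lemma, is the transformation law $\Phi_\ell(-\id \cdot Z) = \Phi_\ell(-Z) = (-1)^{N_\ell} \Phi_\ell(Z)$, which follows from two facts: first, that $\Phi|_L$ is a modular form of weight $k = N(L) + 12$ (Theorem~\ref{thm:quasipullback}), so that by property (2) in the definition of modular form, $\Phi|_L(tZ) = t^{-k}\Phi|_L(Z)$ for all $t \in \bC^\times$; and second, that $-\id$ acts on the affine cone $\Omega_L^{+\bullet}$ simply by $Z \mapsto -Z$. Taking $t = -1$ gives $\Phi|_L(-Z) = (-1)^{-k}\Phi|_L(Z) = (-1)^{N(L)+12}\Phi|_L(Z) = (-1)^{N(L)}\Phi|_L(Z)$. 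I would open the proof by recalling these two points and deriving this identity cleanly in a single display.

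\medskip

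From this identity the lemma is immediate: $\Phi|_L$ is modular with respect to $-\id$ with trivial character precisely when $(-1)^{N(L)} = 1$, i.e. precisely when $N(L)$ is even — so I should double-check the orientation of the statement. Rereading: the lemma asserts modularity with respect to $-\id$ iff $N(L)$ is odd. This matches if by ``modular with respect to $-\id$'' one means that $-\id$ acts on $\Phi|_L$ through the \emph{determinant} character $\det(-\id) = (-1)^{\rk L^\perp}$, or — more likely given the context of Theorem~\ref{thm:quasipullback}, where $\Phi|_L \in M_k(\widetilde{O}^+(L),\det)$ — that one wants $\Phi|_L$ to extend to a modular form for $\Gamma_d = \langle \widetilde O^+(K_d^\perp), -\id\rangle$ \emph{with character $\det$}; then since $-\id \notin \widetilde O^+(K_d^\perp)$ and $\det(-\id) = -1$ on an odd-rank lattice $K_d^\perp$ (rank $21$), consistency of the character forces $\Phi_\ell(-Z) = -\Phi_\ell(Z)$, i.e. $(-1)^{N_\ell} = -1$, i.e. $N_\ell$ odd. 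I would phrase the proof to make this bookkeeping explicit: state that ``modular with respect to $-\id$'' is understood relative to the character $\det$ (consistent with the codomain $M_\bullet(\cdot,\det)$ of the quasi-pullback), note $\det(-\id) = -1$ since $\rk K_d^\perp = 21$ is odd, and conclude by comparing with $(-1)^{N_\ell}$.

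\medskip

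The main (and only) obstacle is purely a matter of conventions: pinning down exactly what ``modular with respect to $-\id$'' means in the sense intended, since $-\id$ acts trivially on the base $\Omega_L^+$ and the content is entirely in the character. Once that is fixed — with the $\det$ character, forced by compatibility with Theorem~\ref{thm:quasipullback} — the proof is a one-line parity comparison, with no analytic content beyond the homogeneity property (2) of modular forms. I would therefore keep the proof to a short paragraph: recall homogeneity, compute $\Phi_\ell(-Z) = (-1)^{N_\ell}\Phi_\ell(Z)$, observe $\det(-\id_{K_d^\perp}) = (-1)^{21} = -1$, and conclude.
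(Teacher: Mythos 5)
Your proposal is correct and is essentially the paper's own argument: the paper derives the lemma from the single identity $\Phi_\ell(-\id Z)=\Phi_\ell(-Z)=(-1)^{N_\ell}\Phi_\ell(Z)$ (homogeneity of weight $12+N_\ell$), exactly as you do. Your only addition is to make explicit the character bookkeeping the paper leaves implicit --- that ``modular with respect to $-\id$'' means compatibility with the character $\det$, and $\det(-\id_{K_d^\perp})=(-1)^{21}=-1$ --- which is the right reading and sharpens rather than changes the argument.
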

Thus, to guarantee $\Gamma_d$-modularity of the quasi-pullback, we want to be certain that each embedding $\iota_\ell$ which we construct has the property that $N(\iota_\ell(K_d^\perp))$ is odd (this is why we said as much in~\ref{desiderataA}).

The main purpose for us in asking for modularity with respect to $\Gamma_d$ is guaranteeing vanishing along the ramification divisor, which we explain now. For $r \in L$ such that $r^2<0$, we say that $r$ is {\it reflective} whenever the reflection 
$$\sigma_r \colon v \mapsto v- 2\frac{(v,r)}{(r,r)}r$$
is an isometry of $L$, i.e. $\sigma_r \in O(L)$. A rational quadratic divisor $\Omega_L^+(r)$ is said to be a {\it reflective divisor} if $r$ is reflective. The following proposition of Gritsenko, Hulek, and Sankaran describes the ramification divisor of the projection $\Omega^+_L \to \Gamma \backslash \Omega^+_L$ as a union of certain reflective divisors: 

\begin{proposition}(see~\cite[Corollary 2.13]{GHSInventiones}) Let $L$ be a lattice of signature $(2,n)$ and $\Gamma$ be a finite index subgroup of $O^+(L)$. Then the ramification divisor $\text{Bdiv}(\pi_\Gamma)$ of the projection $\pi_\Gamma \colon \Omega^+_L \to \Gamma \backslash \Omega^+_L$ is given as the countable union
$$\text{Bdiv}(\pi_\Gamma) = \bigcup_{\substack{r \in L \text{ primitive }\\ r^2<0 \\ \pm \sigma_r \in \Gamma}} \Omega_L^+(r).$$
\end{proposition}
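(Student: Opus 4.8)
I would follow the standard argument of Gritsenko--Hulek--Sankaran. Since $\Gamma$ acts properly discontinuously on $\Omega_L^+$ with finite stabilizers, the projection $\pi_\Gamma$ is a local isomorphism away from the fixed loci of noncentral elements of $\Gamma$, and its divisorial ramification locus $\text{Bdiv}(\pi_\Gamma)$ is precisely the union, over all $g\in\Gamma$ whose fixed locus $\mathrm{Fix}(g)\subset\Omega_L^+$ contains a divisor, of those divisorial fixed loci; as $-\id$ acts trivially on $\Omega_L^+$, we may work with $g$ modulo $\pm\id$. So the plan is to prove: (A) if $g\in\Gamma$ has $\mathrm{Fix}(g)$ containing a divisor, then $g=\pm\sigma_r$ for some primitive $r\in L$ with $r^2<0$, and $\mathrm{Fix}(g)\cap\Omega_L^+=\Omega_L^+(r)$; and (B) conversely, if $r\in L$ is primitive with $r^2<0$ and $\pm\sigma_r\in\Gamma$, then $\pi_\Gamma$ ramifies along $\Omega_L^+(r)$. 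Combining (A) and (B) yields the asserted equality of sets, and the union is countable because $L$ is.

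For (A) the key is an eigenvalue analysis on $L\otimes\mathbb{C}$. Note first that $g$, fixing a point, lies in a finite stabilizer, so $g$ has finite order and is diagonalizable with root-of-unity eigenvalues. Pick a general point $[Z]$ of a divisor contained in $\mathrm{Fix}(g)$ and write $gZ=\lambda Z$; the component of $\mathrm{Fix}(g)$ through $[Z]$ is $\mathbb{P}(V_\lambda)\cap\Omega_L^+$ with $V_\lambda=\ker(g-\lambda)$, and this must have dimension $n-1$. If $\lambda^2\neq 1$, then $(v,v')=(gv,gv')=\lambda^2(v,v')$ forces $V_\lambda$ to be totally isotropic, hence $\mathbb{P}(V_\lambda)$ lies on the quadric and $\dim V_\lambda=n$; but a totally isotropic subspace of the nondegenerate bilinear form on $L\otimes\mathbb{C}$ (rank $n+2$) has dimension at most $\lfloor(n+2)/2\rfloor<n$ for $n\geq 3$ --- a contradiction. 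So $\lambda=\pm1$, and after replacing $g$ by $-g$ if needed we take $\lambda=1$. Then $\mathbb{P}(V_1)$ is not contained in the quadric, so $\dim V_1=n+1$ and $V_1^\perp$ is a $g$-stable line; since $g\in O(L)$ is integral, $V_1\cap L$ and $V_1^\perp\cap L$ are primitive sublattices spanning $V_1$ and $V_1^\perp$, so $V_1^\perp\cap L=\mathbb{Z}r$ with $r$ primitive and $gr\in\{\pm r\}$. As $g$ is semisimple, $L\otimes\mathbb{C}=V_1\oplus V_1^\perp$ orthogonally, so $V_1$ is nondegenerate and $r^2\neq 0$; hence $r\notin V_1$ and $gr=-r$, i.e.\ $g=\sigma_r$. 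Finally $Z$ and $\overline{Z}$ lie in $V_1$ and span a positive-definite real plane, so $V_1=r^\perp$ has real signature $(2,n-1)$ and therefore $r^2<0$; since $V_1=r^\perp$, the fixed divisor is $\mathbb{P}(r^\perp\otimes\mathbb{C})\cap\Omega_L^+=\Omega_L^+(r)$.

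Part (B) is immediate: for primitive $r\in L$ with $r^2<0$, the reflection $\sigma_r$ fixes $r^\perp$ pointwise and acts by $-1$ on $\mathbb{Q}r$, and a short computation with $\sigma_r Z=Z-\tfrac{2(Z,r)}{r^2}r$ shows $\mathrm{Fix}(\sigma_r)\cap\Omega_L^+=\Omega_L^+(r)$, a divisor (nonempty because $r^\perp$ has signature $(2,n-1)$), along which $\pi_\Gamma$ is ramified with index $2$; hence $\Omega_L^+(r)\subseteq\text{Bdiv}(\pi_\Gamma)$ whenever $\pm\sigma_r\in\Gamma$. The only genuinely substantive step is the linear algebra of part (A); I expect the two delicate points there to be (i) excluding eigenvalues with $\lambda^2\neq 1$, which is exactly where the hypothesis that $L$ has large enough rank is used, and (ii) passing from an abstract $g$-stable negative line to an honest primitive integral root $r\in L$ --- handled, respectively, by the isotropic-subspace dimension bound and by the integrality of $O(L)$.
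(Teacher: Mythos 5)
The paper gives no internal proof of this proposition—it is quoted directly from \cite[Corollary 2.13]{GHSInventiones}—and your reconstruction is precisely the standard Gritsenko--Hulek--Sankaran argument, carried out correctly: the eigenvalue analysis forcing any noncentral $g\in\Gamma$ with divisorial fixed locus to be $\pm\sigma_r$ for a primitive $r$ with $r^2<0$, and the converse order-$2$ ramification along $\Omega_L^+(r)$ when $\pm\sigma_r\in\Gamma$. You also rightly flag that ruling out eigenvalues with $\lambda^2\neq 1$ uses the isotropic-dimension bound, hence $n\geq 3$—a hypothesis the paper's statement omits but which is harmless here since it is applied to lattices of signature $(2,19)$ and $(2,20)$.
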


Let us now apply the above proposition to a modular form $\Phi \in M_k(\Gamma_d, \det)$. We first observe that $-\sigma_r \in \Gamma_d \iff \sigma_r \in \Gamma_d$. Thus, to prove $\Phi$ vanishes along $\text{Bdiv}(\pi_{\Gamma_d})$, it suffices to show that $\Phi$ vanishes on all reflective divisors $\Omega_{K_d^\perp}^+(r)$ with $\sigma_r \in \Gamma_d$. By modularity,  we have $\det(\sigma_r)\Phi(Z) =\Phi(\sigma_rZ)$ for all $Z \in \Omega_{K_d^\perp}^{+,\bullet}$. We observe that $\det(\sigma_r) = -1$ and $(\sigma_r)|_{\Omega_{K_d^\perp}^+(r)^\bullet}= \id$. It follows that $\Phi$ vanishes on $\Omega_{K_d^\perp}^+(r)^\bullet$. This yields the following proposition: 

\begin{proposition}
\label{prop:ram}
Every modular form for $\Gamma_d$ with character $\det$ vanishes along the ramification divisor.
\end{proposition}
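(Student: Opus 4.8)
The plan is to combine the description of the ramification divisor from the quoted proposition of Gritsenko--Hulek--Sankaran with the transformation law for a modular form of character $\det$ under a reflection. First I would recall that, by the cited Corollary 2.13 of~\cite{GHSInventiones}, the ramification divisor $\text{Bdiv}(\pi_{\Gamma_d})$ is the union of the rational quadratic divisors $\Omega_{K_d^\perp}^+(r)$ over primitive $r \in K_d^\perp$ with $r^2<0$ and $\pm\sigma_r \in \Gamma_d$. Since $-\id \in \Gamma_d$ (this is exactly why we chose to work with $\calF_d = \Gamma_d\backslash\Omega^+_{K_d^\perp}$ rather than $\calG_d$), the conditions $\sigma_r\in\Gamma_d$ and $-\sigma_r\in\Gamma_d$ are equivalent, so it suffices to treat the divisors $\Omega_{K_d^\perp}^+(r)$ with $\sigma_r \in \Gamma_d$.

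Fix such an $r$ and let $\Phi \in M_k(\Gamma_d,\det)$. The key computation is that on the affine cone, the modularity relation gives $\Phi(\sigma_r Z) = \det(\sigma_r)\,\Phi(Z) = -\Phi(Z)$ for all $Z \in \Omega_{K_d^\perp}^{+,\bullet}$, because a reflection has determinant $-1$. On the other hand, $\sigma_r$ acts as the identity on the hyperplane $r^\perp$, and $\Omega_{K_d^\perp}^+(r)^\bullet$ is contained in the cone over $r^\perp \otimes \bC$, so $\sigma_r Z = Z$ for every $Z$ in $\Omega_{K_d^\perp}^+(r)^\bullet$. Combining the two identities yields $\Phi(Z) = -\Phi(Z)$, hence $\Phi(Z)=0$, on the cone over $\Omega_{K_d^\perp}^+(r)$. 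Since this holds for each $r$ in the defining union, $\Phi$ vanishes on all of $\text{Bdiv}(\pi_{\Gamma_d})$, which is the assertion.

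There is essentially no serious obstacle here: the statement is a formal consequence of the two inputs (the GHS description of the ramification locus and the definition of a modular form with character $\det$), and both have already been set up in the excerpt. The one point that warrants a sentence of care is the observation $-\sigma_r \in \Gamma_d \iff \sigma_r \in \Gamma_d$, which relies on $-\id \in \Gamma_d$; this lets us dispense with the $\pm$ in the index set of the union and reduce to reflections lying in $\Gamma_d$ proper. Everything else is the short two-line computation above, so I would present it essentially verbatim as in the paragraph preceding the proposition, making explicit only that $\sigma_r$ fixes $\Omega_{K_d^\perp}^+(r)^\bullet$ pointwise because it fixes $r^\perp$.
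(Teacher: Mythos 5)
Your proposal is correct and follows essentially the same route as the paper: reduce via $-\id \in \Gamma_d$ to reflections $\sigma_r \in \Gamma_d$, then combine $\Phi(\sigma_r Z) = \det(\sigma_r)\Phi(Z) = -\Phi(Z)$ with the fact that $\sigma_r$ fixes $\Omega_{K_d^\perp}^+(r)^\bullet$ pointwise to get $\Phi = 0$ there. The only difference is cosmetic — you spell out explicitly why $\pm\sigma_r \in \Gamma_d$ reduces to $\sigma_r \in \Gamma_d$ and why $\sigma_r$ fixes the cone over $r^\perp$, which the paper leaves implicit.
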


\section{Constructing embeddings: specifics}
\label{S:constructingembeddingsspecifics}
In this section, we prove Theorem~\ref{thm:MainThm}. This will follow from the following proposition:

  \begin{proposition}
\label{prop:mainprop}
For each discriminant for which we claim $\calM_d$ is of general type in Theorem~\ref{thm:MainThm}, there is some $\ell$, \textit{admissible for $d$}, which satisfies conditions~(\ref{desiderataA}) and~(\ref{desiderataB}) above. \end{proposition}

\begin{proof}[Proof of Theorem~\ref{thm:MainThm}, assuming Proposition~\ref{prop:mainprop}]
For each $d$ in the theorem statement, there is some $\ell$ from Proposition~\ref{prop:mainprop} such that the quasi-pullback (Theorem~\ref{thm:quasipullback}) $\Phi_\ell$ is a nonzero cusp form of weight $\le 19$ for $\Gamma_d$ with character $\det$. Furthermore, by Proposition~\ref{prop:ram}, this quasi-pullback vanishes along the ramification locus of $\Omega^+_{K_d^\perp} \to \Gamma \backslash \Omega^+_{K_d^\perp}$. It follows from~\ref{thm:cuspformtrick} that $\Gamma \backslash \Omega^+_{K_d^\perp}$ is a variety of general type. 
\end{proof}

All that remains to do is provide a proof for Proposition~\ref{prop:mainprop}. The rest of the paper is dedicated to this goal.

Given an embedding $\iota_\ell$, we may count $N_\ell$ with the help of Lemmas~\ref{lem:types} and~\ref{lem:typeIlem} using $L_0=\langle a_1, a_2\rangle $, in which case $(L_0)^\perp_{E_8(-1)} = D_6(-1)$. The upshot of Lemma~\ref{lem:typeIlem} is that, for any admissible $\ell = \alpha e + \beta f + v $ such that $\alpha$ and $\beta$ satisfy the inequalities~\eqref{eqn:inequality}, the set $R_\ell$ is contained entirely in $D_6(-1)$:
$$R _\ell = \{r \in D_6(-1) \  \colon \ r^2 = -2, (r, \ell) = 0\}.$$

\begin{proof}[Proof of Proposition~\ref{prop:mainprop}]
We need to construct primitive embeddings $\iota_\ell$ associated to $\ell= \alpha e + \beta f + v$ such that $0<N_\ell \le 7$ and $N_\ell$ is odd. We construct such an $\ell$ for all large $m$ by picking $\alpha, \beta$ such that~\eqref{eqn:inequality} holds, and can pick $v$ thanks to Lemma~\ref{lem:threesquares} below. We then compute a lower bound on the discriminants for which these conditions can always be met. This leaves us with a finite list of discriminants to analyze. We handle these cases with a computer, giving a summary of this procedure in~\S\ref{ss:computersearch}. We break our analysis into the three cases of discriminant congruent to $0,2$, or $4$ modulo $8$ in sections~\S\ref{ss:d8m},~\S\ref{ss:d8m+2}, and~\S\ref{ss:d8m+4}.

\subsection{Analysis: $d=8m$}
\label{ss:d8m}
For the case $d=8m$, we are searching for $\alpha, \beta$, and $v$ such that $\ell=\alpha e + \beta f + v$ of length $2m$ is admissible for $d=8m$. For the admissibility of $\ell$, it is necessary and sufficient that $(\ell, a_1)=(\ell, a_2)=0$ (by~\eqref{eqn:admissible}), which amounts to requiring $v \in D_6(-1)$. The next lemma  gives a way to construct $\ell$ such that the associated embedding has small $N_\ell$:
\begin{lemma}
\label{lem:keylemma8m}
 Let $\ell = \alpha e + \beta f + v \in U \oplus D_6(-1)$. Suppose that $\alpha, \beta$ satisfy the inequalities \eqref{eqn:inequality}, and that $v$ is of the form 
\begin{equation} \label{eqn:v8m} v=x_1e_1+ x_2e_2 + x_3 e_3 + e_4 + e_5 \end{equation}
with $x_1,x_2, x_3$ all nonnegative integers, not all equal. Then $N_\ell \le 5$. In particular, $N_\ell$ is always odd in this case, and:
\medskip \begin{enumerate} 
\item  If the nonnegative integers $x_1,x_2, x_3$ are distinct and none of them equal to $1$, then $R_\ell =  \{\pm(e_4-e_5)\}$.  \medskip

\item If the nonnegative integers $x_1, x_2, x_3$ are distinct with $x_j =1$, then $R_\ell =  \{\pm (e_4-e_5), \pm(e_4 - e_j), \pm(e_5 - e_j) \}$.

\item If the nonnegative integers $x_1, x_2, x_3$ are distinct with $x_i=0$ and none of them is equal to 1, then $R_\ell =  \{\pm (e_4-e_5), \pm(e_1 - e_6), \pm(e_1 + e_6) \}$.

\item  If the nonnegative integers $x_1,x_2, x_3$ are distinct with $x_i=0, x_j=1$, then $R_\ell =  \{\pm (e_i +e_6), \pm (e_i-e_6), \pm \pm(e_4 - e_j), \pm(e_5 - e_j) , \pm(e_4-e_5)\}$.  \medskip


\end{enumerate}
\end{lemma}

\begin{proof}
By hypothesis, all vectors in $R_\ell$ are of Type I (Lemma~\ref{lem:types}). We shall write $x_4= x_5=1$ and $x_6=0$. The roots of $D_6(-1)$ are $\pm e_i \pm e_j$, $1 \le i,j \le 6$, $i \ne j$. We have for all such roots $r \in D_6(-1)$,
$(r,v) = \pm (e_i \pm e_j,v) = \pm (x_i \pm x_j)$. . The other cases are proved similarly.
\end{proof} 

Thus, to find $v$ as in the lemma,  it would suffice to pick $\alpha, \beta$ satisfying~\eqref{eqn:inequality} such that $2(\alpha\beta-m-1)$ is a sum of three distinct coprime squares: any triple of {\it distinct} nonnegative integers $(x_1,x_2,x_3)\in \bZ^3_{\ge 0}$ with $\gcd(x_1, x_2, x_3)=1$ which is a solution to
\begin{equation}\label{eqn:lagrange8m} x_1^2 +x_2^2+x_3^2 = 2(\alpha\beta-m-1), \ x_1x_2x_3 \ne 0\end{equation}
yields $v$ for which Lemma~\ref{lem:keylemma8m} applies.
 The next lemma, guarantees the existence of these solutions in many cases, is from~\cite[Section 1, Korollar 1]{HalterKoch}:
\begin{lemma}\label{lem:threesquares}
Every integer $\Delta \not \equiv 0, 4,7 \bmod 8$ with 
$$\Delta \notin \{ 1,2,3,6,9,11,18,19,22,27,33,43,51,57,67,99,102,123,163,177,187,267,627 \} \cup \{N\}$$
 may be written as the sum of three distinct, coprime squares. If the generalized Riemann hypothesis is true for all global $L$-functions, then we may take $N=1$, but if a generalized Riemann hypothesis (GRH) is false for certain $L$-functions, then $N >5\cdot 10^{10}$.
\end{lemma}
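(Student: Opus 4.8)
The plan is to count primitive representations of $\Delta$ as a sum of three squares and show that the degenerate ones --- those with a vanishing coordinate or a repeated coordinate --- are negligible in number. First I would invoke the Gauss--Legendre three-square theorem: the hypothesis $\Delta \not\equiv 0,4,7 \bmod 8$ in particular forbids $\Delta$ from having the shape $4^a(8b+7)$, so $\Delta$ is a sum of three integer squares, and it places $\Delta$ in the range $\Delta \equiv 1,2,3,5,6 \bmod 8$, which is exactly where Gauss's mass formula supplies \emph{primitive} representations. Writing $r_3^*(\Delta)$ for the number of $(x,y,z) \in \bZ^3$ with $x^2+y^2+z^2=\Delta$ and $\gcd(x,y,z)=1$ (counted with order and signs), Gauss's class number formula gives $r_3^*(\Delta) = 12\,h(-4\Delta)$ when $\Delta \equiv 1,2 \bmod 4$ and $r_3^*(\Delta) = 24\,h(-\Delta)$ when $\Delta \equiv 3 \bmod 8$, where $h(-D)$ is the class number of primitive positive-definite binary quadratic forms of discriminant $-D$. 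In particular the coprimality required in the lemma is automatic for every representation being counted, so only the "distinct and nonvanishing" conditions remain.

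Next I would bound the number of degenerate primitive representations. A representation with a vanishing coordinate amounts to writing $\Delta$ as a sum of two squares, of which there are $O_\epsilon(\Delta^\epsilon)$ by the divisor bound for $r_2$; a representation in which two coordinates agree in absolute value is a solution of $2x^2+y^2=\Delta$, again $O_\epsilon(\Delta^\epsilon)$ in number; and $3x^2=\Delta$ has $O(1)$ solutions. Hence the number of bad primitive representations is $O_\epsilon(\Delta^\epsilon)$.

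The main input is then a lower bound for the class number. Siegel's theorem gives $h(-D) \gg_\epsilon D^{1/2-\epsilon}$, hence $r_3^*(\Delta) \gg_\epsilon \Delta^{1/2-\epsilon}$, which dominates the $O_\epsilon(\Delta^\epsilon)$ count of degenerate representations once $\Delta$ exceeds some threshold $N_0$. For such $\Delta$ there is a representation $\Delta = x^2+y^2+z^2$ with $x,y,z$ nonzero, coprime, and pairwise distinct in absolute value; replacing each of $x,y,z$ by its absolute value yields the asserted decomposition into three distinct, coprime, nonvanishing squares. This settles every $\Delta > N_0$, and a finite computer search over the remaining $\Delta$ identifies the explicit exceptional set in the statement.

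The crux --- and the reason the lemma carries the extra formal exception $N$ --- is that Siegel's bound is \emph{ineffective}: one cannot name $N_0$, so one cannot certify unconditionally that the displayed finite list is complete, only that at most one further exception can be hiding in the (large but finite) ineffective range. Under GRH one replaces Siegel's bound by an effective lower bound for $h(-D)$, which makes $N_0$ explicit and small enough that the computer search covers all of $[1,N_0]$, forcing $N=1$; a quantitative unconditional version of the argument, using the fact that there is at most one exceptional modulus, still pushes any missed exception past $5\cdot 10^{10}$. I expect the only fiddly part to be the sign-counting bookkeeping in Gauss's formula across the residue classes; the genuine obstacle is the ineffectivity, which is intrinsic here and cannot be removed without GRH. (This is of course the content of~\cite[Korollar 1]{HalterKoch}.)
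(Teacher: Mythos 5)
The paper does not prove this lemma at all: it is quoted directly from Halter-Koch (\cite{HalterKoch}, Korollar 1), with the finite exceptional list, the extra value $N$, and the GRH dichotomy taken verbatim from that source. Your sketch is a correct outline of the argument underlying that result --- Gauss's class-number formula for primitive three-square representations, an $O_\epsilon(\Delta^\epsilon)$ bound on representations with a vanishing or repeated coordinate, Siegel/Tatuzawa lower bounds for the class number (whence the single possible exception $N$ and its ineffectivity), and GRH for an effective threshold --- so it is consistent with, and strictly more detailed than, what the paper itself does, the only glossed point being the routine reduction from ``one exceptional character'' to ``one exceptional $\Delta$'' via the conductor factor in the class number of imprimitive discriminants.
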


We also have the following lemma to give us more flexibility in our choice of $\alpha$ and $\beta$ beyond $(\alpha, \beta) = 1$

\begin{lemma} \label{lem:primlemma} Assume that  $\ell = \alpha e + \beta f + v \in U \oplus E_8(-1)$ has square length $\ell^2 = 2m$, with $v$ primitive in $D_6(-1) = \langle a_1, a_2\rangle^\perp_{E_8(-1)}$ , and furthermore assume that $2 \nmid (\alpha, \beta)$. Then the embedding $\iota_\ell \colon K_{8m}^\perp \hookrightarrow L_{2,26}$ is primitive. \end{lemma}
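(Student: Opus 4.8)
The plan is to reduce the assertion to the primitivity of a single rank-$3$ sublattice inside a copy of $U \oplus E_8(-1)$, and then to verify that primitivity one prime at a time, the prime $2$ being the only case that calls on the hypothesis that $\alpha,\beta$ are odd.

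First I would use the orthogonal splitting $L_{2,26} = P \oplus Q$, where $P = U_1 \oplus E_8(-1)^{(1)} \oplus E_8(-1)^{(2)}$ is the image under $\iota_\ell$ of the summand $U \oplus E_8(-1)^{\oplus 2}$ of $K_{8m}^\perp$, and $Q = U_2 \oplus E_8(-1)^{(3)}$ is the complement, with $U_2 = \langle e,f\rangle$ the hyperbolic plane containing $\ell$. Writing $A = \langle a_1, a_2\rangle \cong A_1(-1)^{\oplus 2}$ for the fixed primitive copy inside $E_8(-1)^{(3)}$, for which $A^\perp_{E_8(-1)^{(3)}} = D_6(-1)$ by Remark~\ref{rem:e8sublatices}, the conventions of \S\ref{S:constructingembeddingsgeneralities} identify $\iota_\ell(K_{8m}^\perp)$ with $P \oplus (A \oplus \langle\ell\rangle)$; here $A \oplus \langle\ell\rangle$ is an orthogonal direct sum inside $Q$ since $\ell$ is admissible for $d = 8m$, i.e. $(\ell, a_1) = (\ell, a_2) = 0$, and $\ell^2 = 2m \ne 0$. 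Because $L_{2,26}/\iota_\ell(K_{8m}^\perp) \cong Q/(A \oplus \langle\ell\rangle)$, it is enough to show that $A \oplus \langle\ell\rangle$ is primitive in $Q$. As a preliminary I would record that $\ell$ itself is primitive in $Q$: if $\ell = c\ell'$ with $\ell' \in Q$ and $c \ge 1$, then the $E_8(-1)^{(3)}$-component $v'$ of $\ell'$ satisfies $v = cv'$, and pairing $v'$ against $a_1, a_2$ shows $v' \in D_6(-1)$, whence $c = 1$ by primitivity of $v$ in $D_6(-1)$.

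For the main step, suppose $p$ is a prime, $x \in Q$, and $px = s_1 a_1 + s_2 a_2 + t\ell$ with $s_1, s_2, t \in \bZ$; the goal is $p \mid s_1, s_2, t$, which gives $x \in A \oplus \langle\ell\rangle$. Pairing $px$ against $a_1$ and $a_2$ (using $a_i^2 = -2$, $(a_1,a_2) = 0$, $(\ell, a_i) = 0$) gives $p \mid 2s_1$ and $p \mid 2s_2$; pairing against the isotropic generators of $U_2$ (using $(a_i, e) = (a_i, f) = 0$) gives $p \mid t\alpha$ and $p \mid t\beta$, hence $p \mid t\gcd(\alpha,\beta)$. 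If $p$ is odd, then $p \mid s_1$ and $p \mid s_2$ at once, so $t\ell = px - s_1 a_1 - s_2 a_2 \in pQ$, and primitivity of $\ell$ in $Q$ forces $p \mid t$. If $p = 2$, the hypothesis $2 \nmid \gcd(\alpha,\beta)$ combined with $2 \mid t\gcd(\alpha,\beta)$ gives $2 \mid t$; subtracting $t\ell$ and comparing $U_2$-components then shows $s_1 a_1 + s_2 a_2 = 2x'$ with $x' \in E_8(-1)^{(3)}$, and the primitivity of $A$ in $E_8(-1)^{(3)}$ lets one write $x' \in A$ and deduce $2 \mid s_1$, $2 \mid s_2$. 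In all cases $x \in A \oplus \langle\ell\rangle$, so $A \oplus \langle\ell\rangle$ is primitive in $Q$, and $\iota_\ell$ is primitive.

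The prime $p = 2$ is the step I expect to be the main obstacle: for odd $p$, the relation $p \mid 2s_i$ already gives $p \mid s_i$, but for $p = 2$ one must first use the oddness hypothesis to extract $2 \mid t$ and then invoke the primitivity of the embedding $A_1(-1)^{\oplus 2} \hookrightarrow E_8(-1)$ to upgrade $s_1 a_1 + s_2 a_2 \in 2E_8(-1)^{(3)}$ to $2 \mid s_1, s_2$. Everything else is routine manipulation of the bilinear form; in particular, neither the inequalities of \S\ref{S:constructingembeddingsgeneralities} nor coprimality of $\alpha$ and $\beta$ plays any role.
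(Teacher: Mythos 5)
Your proof is correct, and its first step --- reducing primitivity of $\iota_\ell$ to primitivity of the rank-$3$ sublattice spanned by $a_1,a_2,\ell$ inside $Q=U_2\oplus E_8(-1)^{(3)}$ --- is exactly the paper's reduction. The verification, however, runs differently. The paper parametrizes the primitive vectors of $A_1(-1)^{\oplus 2}\oplus\langle 2m\rangle$ as $xu+y\ell$ with $u$ primitive in $\langle a_1,a_2\rangle$ and $\gcd(x,y)=1$, and its crux is the index computation $E_8(-1)/(A_1(-1)^{\oplus 2}\oplus D_6(-1))\cong \bZ/2\times\bZ/2$: any integer $n$ dividing $xu+y\ell$ must divide $2$, after which oddness of $\gcd(\alpha,\beta)$ gives $n\mid y$, primitivity of $A_1(-1)^{\oplus 2}$ in $E_8(-1)$ gives $n\mid x$, and coprimality of $x,y$ forces $n=1$. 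You instead use the prime-by-prime saturation criterion and extract divisibilities by pairing with $a_1,a_2,e,f$; you never need the $(\bZ/2)^2$ quotient, relying for odd $p$ on primitivity of $\ell$ in $Q$ (i.e.\ of $v$ in $D_6(-1)$) and at $p=2$ on primitivity of $A_1(-1)^{\oplus 2}$ in $E_8(-1)$, with the oddness of $\gcd(\alpha,\beta)$ entering at $p=2$ exactly as it does in the paper. One small bonus of your version: the paper's step ``$n\mid 2$'' tacitly uses that $xu+yv$ is primitive in $A_1(-1)^{\oplus 2}\oplus D_6(-1)$, i.e.\ the primitivity of $v$ in $D_6(-1)$, which your argument invokes explicitly; the cost is a case split on $p$ in place of the single index fact. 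Your closing observation that only $2\nmid\gcd(\alpha,\beta)$ (not coprimality of $\alpha,\beta$) is needed matches the paper.
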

\begin{proof} 
It is enough to check that $M_d=A_1(-1)^{\oplus 2} \oplus \langle 2m \rangle$ embeds primitively into $U \oplus E_8(-1)$. To show an embedding is primitive, it suffices to show the image of every primitive vector is primitive. Thus, we check that $x u + y \ell $ is primitive in $U \oplus E_8(-1)$ for any relatively prime integers $x$ and $y$ and any primitive vector $u \in \langle a_1, a_2 \rangle$. Suppose that there is a positive integer $n$ dividing $xu +y \ell$ in $U \oplus E_8(-1)$. Then $n | y(\alpha, \beta)$. As $E_8(-1)/(A_1(-1)^{\oplus 2} \oplus D_6(-1)) \simeq \bZ /2 \times \bZ /2$, we must have $n| 2$. It follows that $n | y$, so $n | x$ as well (as $A_1(-1)^{\oplus 2}$ is primitively embedded in $E_8(-1)$). As $x$ and $y$ are coprime, we must have $n=1$, so $xu + y\ell$ is indeed primitive under the embedding $\iota_\ell$, and we conclude that $\iota_\ell$ is primitive.
\end{proof}

To build our desired embeddings, we will show that for $m$ large enough, we can choose $\alpha$, $\beta$ so that: (a) $2 \nmid (\alpha, \beta)$, (b) the inequalities~\eqref{eqn:inequality} hold, and (c) $2(\alpha\beta - m -1)$ is a sum of three distinct coprime nonnegative squares.  Observe that it is necessary and sufficient for (c) to hold that $\alpha \beta - m-1$ be both odd and avoid some finite set of exceptional values (see Lemma~\ref{lem:threesquares}).
Then by Lemma~\ref{lem:primlemma} and Lemma~\ref{lem:keylemma8m}, we get a primitive embedding $\iota_\ell \colon K_d^\perp \to L_{2,26}$ with $N_\ell \in \{1,3\}$.  

We begin by choosing some real number $\rho>0$ such that 
\begin{equation} \label{eqn:alphafreedom} \sqrt{\frac{5m}{4}} - \sqrt{(1+\rho)m} > 2. \end{equation}
If $m \equiv 0 \bmod 2$, we are able to pick $\alpha$ and $\beta=\alpha+1$ satisfying~\eqref{eqn:inequality}, thanks to~\eqref{eqn:alphafreedom}. If $m \equiv 1 \bmod 2$, we again can use~\eqref{eqn:alphafreedom} to pick $\alpha \equiv 1 \bmod 2$ satisfying the inequality for $\alpha$ in~\eqref{eqn:inequality}, and set $\alpha = \beta$. So in any case, with these choices for $\alpha$ and $\beta$, (a), (b) hold, and also the quantity $\alpha\beta-m-1$ is odd.

We also need to ensure that $\alpha\beta-m-1$ misses a finite set of exceptional values. For this, note that \begin{equation}\label{eqn:alphaepislon} \alpha^2 + \alpha - m -1  > \alpha^2 - m -1 > \rho m - 1
\end{equation}
holds for all $m, \alpha$ for which~\eqref{eqn:inequality} holds. So given our choices of $\alpha$ and $\beta$ from the previous paragraph, we have the inequality
$$2(\alpha\beta - m-1) > 2\rho m-2.$$
Now, we impose the additional constraint that 
\begin{equation}
\label{eqn:epsilon}
\rho m >52
\end{equation} 
 guaranteeing that $2(\alpha \beta -m -1)>102$ and thereby avoiding the exceptional values of Lemma~\ref{lem:threesquares} (note that there are no odd values in the list of exceptional values which lie between $103$ and $627$), except perhaps $N$. If $2(\alpha \beta-m-1) = N$, then the inequalities 
$$\beta^2 - \beta -m-1 < \beta^2-m-1<\beta^2-m< \frac{m}{4}  $$
hold under our continuing assumption of~\eqref{eqn:inequality} , so 
$$N<\frac{m}{2}.$$
Therefore, we have $m>10 \cdot 10 ^{10}$. If we take $\rho$ to be sufficiently small and $m$ is large enough, then 
\begin{equation}
\label{eqn:GRHfix}
\sqrt{\frac{5m}{4}} - \sqrt{(1+\rho)m} > 4
\end{equation}
so we can adjust $\alpha$ by $\pm 2$ to avoid $N$ (and still keep the quantity $\alpha \beta-m-1$ odd). 

At this point, we have demonstrated that whenever $m$ and $\rho$ satisfy the inequalities~\eqref{eqn:epsilon} and~\eqref{eqn:alphafreedom}, it is possible to pick $\alpha$ and $\beta$ and $v$ to prove $\calM_d$ is of general type. 
A simple optimization for~\eqref{eqn:alphafreedom} and~\eqref{eqn:epsilon} yields $m \ge 648$ for $\rho = 0.0804$. If $m > 10 \cdot 10^{10}$, then~\eqref{eqn:GRHfix} holds, so $\alpha$ may be adjusted to avoid $N$ if necessary. 

Putting everything together, we have now shown that when $m \ge 648$, Proposition~\ref{prop:mainprop} is true for $d=8m$. 
For the discriminants $d=8m$ with $m <648$, we make use of a computer to find explicit embeddings. See~\S\ref{ss:computersearch} for details.

\subsection{Analysis: $d=8m+2$}
\label{ss:d8m+2}
As in the $d=8m$ case, we are searching for $\alpha, \beta$, and $v$ such that  the square-length $2m$ vector $\ell=\alpha e + \beta f + v$ is admissible (i.e. satisfies~\eqref{eqn:admissible} for $d=8m+2$ and yields a small, odd value for $N_\ell$. For the admissibility of $\ell$, it is necessary and sufficient that the vector $v \in E_8(-1)$ may be written as 
$$v=\frac{-a_2}{2} + v' \in (\langle a_1, a_2 \rangle \oplus D_6(-1))^\vee = \langle a_1, a_2 \rangle^\vee \oplus D_6(-1)^\vee,$$
 where $v' \in D_6(-1)^\vee = (\langle a_1, a_2 \rangle^\perp)^\vee$. 

For each $m$ greater than the lower bound that is to be determined, our argument is written in a way that relies on the choice of $a_1, a_2 \in E_8(-1)$; precisely, for each $m$, we will construct $E_8(-1)$ as a specific overlattice of $A_1(-1)^{\oplus 2} \oplus D_6(-1)$, and then consider embeddings for which $a_1, a_2$ generate image of the summand $A_1(-1)^{\oplus 2}$. The theory of overlattices is explained in~\cite[Section 1.4]{Nikulin}, a consequence of which is the following: there are exactly two unimodular negative definite even integral sublattices $L_1$ and $L_2$ of rank 8 (necessarily isomorphic to $E_8$) contained in $(A_1(-1)^{\oplus 2})^\vee \oplus D_6(-1)^\vee$, each of which corresponds to one of the two maximal isotropic subgroups $L_1/(A_1(-1)^{\oplus 2}) \oplus D_6(-1))$ and $L_2/(A_1(-1)^{\oplus 2} \oplus D_6(-1))$ of $D(A_1(-1)^{\oplus 2} \oplus D_6(-1))$. To describe $L_1$ and $L_2$, let $h_1, h_2$ each denote a generator of an orthogonal summand of $A_1(-1)^{\oplus 2}$, and define elements $b_1, b_{2,p}$ in $\langle h_1, h_2 \rangle^\vee \oplus D_6(-1)^\vee$ by
\[ b_1 \defeq e_1 + \frac{h_1+h_2}{2}\]
\[ b_{2,p} \defeq \frac{1}{2}(e_1+ e_2+e_3+e_4+e_5+e_6 ) + \frac{h_p}{2} \]
where the index $p$ is either $1$ or $2$.
Then $L_p$ is generated as a submodule of $(A_1(-1)^{\oplus 2})^\vee \oplus D_6(-1)^\vee$ by $b_1, b_{2,p}$, and $\langle h_1, h_2 \rangle \oplus D_6(-1)$. 

We now prove two simple lemmas: one will help ensure our eventual choice for $v'$ actually gives an embedding, and the other controls the size of $N_\ell$.
\begin{lemma}
\label{lem:ambientLp8m+2}
 Suppose that $v' \in D_6(-1) \otimes \bQ$ is of the form 

\begin{equation} \label{eqn:v8m+2} v'=\frac{1}{2} (x_1e_1 + x_2e_2 + x_3e_3 + 3e_4 + 3 e_5  +3e_6)\end{equation}

 with $x_i \in \bZ$ all nonnegative \textit{odd}. Then $v' \in D_6(-1)^\vee$ and there is always a choice of $p \in \{1,2\}$ such that $v \defeq v'-\frac{h_2}{2} \in L_p$. 

 \end{lemma}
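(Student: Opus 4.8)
The plan is to verify both assertions by a direct computation inside the $\Q$-lattice $(\langle h_1,h_2\rangle)^\vee \oplus D_6(-1)^\vee$, using the explicit generators $b_1$, $b_{2,p}$ of $L_p$ together with the sublattice $\langle h_1,h_2\rangle \oplus D_6(-1)$. The first claim, that $v' \in D_6(-1)^\vee$, is immediate: since $D_6(-1)^\vee$ has underlying group $\Z^6 + \Z\cdot\tfrac12(e_1+\cdots+e_6)$ and $x_1,x_2,x_3$ and $3$ are all odd, the vector $2v' = x_1e_1+x_2e_2+x_3e_3+3e_4+3e_5+3e_6$ is congruent to $e_1+\cdots+e_6$ modulo $2\Z^6$, so $v' - \tfrac12(e_1+\cdots+e_6) \in \Z^6$ and hence $v' \in D_6(-1)^\vee$.

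For the second claim I would introduce the integer $t \defeq \tfrac12(x_1+x_2+x_3+9)$, which is well-defined because $x_1+x_2+x_3$ is a sum of three odd integers, hence odd. A general element of $L_p$ has the form $a_1 h_1 + a_2 h_2 + w + n_1 b_1 + n_2 b_{2,p}$ with $a_i, n_i \in \Z$ and $w \in D_6(-1)$; its $\langle h_1,h_2\rangle^\vee$-component is $a_1 h_1 + a_2 h_2 + \tfrac{n_1}{2}(h_1+h_2) + \tfrac{n_2}{2}h_p$, and its $D_6(-1)^\vee$-component is $w + n_1 e_1 + \tfrac{n_2}{2}(e_1+\cdots+e_6)$. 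Imposing that the first equal $-\tfrac12 h_2$ amounts to solving two linear equations for $a_1,a_2$ over $\Z$, and this forces $n_2$ odd in both cases, together with $n_1$ odd when $p=1$ and $n_1$ even when $p=2$. Once $n_2$ is odd, $w \defeq v' - n_1 e_1 - \tfrac{n_2}{2}(e_1+\cdots+e_6)$ automatically lies in $\Z^6$, and it lies in $D_6(-1)$ exactly when its coordinate sum $t - n_1 - 3n_2 \equiv t + n_1 + n_2 \pmod 2$ is even. Since $n_1+n_2$ is even for $p=1$ and odd for $p=2$, I would take $p = 1$ with $(n_1,n_2)=(1,1)$ when $t$ is even and $p=2$ with $(n_1,n_2)=(0,1)$ when $t$ is odd, and conclude by exhibiting the explicit decompositions $v' - \tfrac{h_2}{2} = -h_1 - h_2 + w + b_1 + b_{2,1} \in L_1$ in the first case and $v' - \tfrac{h_2}{2} = -h_2 + w + b_{2,2} \in L_2$ in the second, checking in each case that the $\langle h_1,h_2\rangle^\vee$- and $D_6(-1)^\vee$-components agree.

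I do not expect a genuine obstacle: the argument is pure parity bookkeeping, and the one point requiring care is to keep the $\langle h_1,h_2\rangle^\vee$- and $D_6(-1)^\vee$-components separate while remembering that membership of $w$ in $D_6(-1)$, as opposed to merely $\Z^6$, is the condition that its coordinate sum be even. It is precisely this parity, encoded by $t$, that the two overlattices $L_1$ and $L_2$ are engineered to accommodate — which is why a valid choice of $p$ always exists — so the only real content of the lemma is recognizing that these two cases exhaust the possibilities.
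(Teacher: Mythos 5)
Your argument is correct, and it lands on the same parity dichotomy as the paper (your condition ``$t$ even'' is exactly the paper's $x_1+x_2+x_3\equiv 3 \bmod 4$, and ``$t$ odd'' is the complementary case), but you run the membership test for $L_p$ differently. The paper exploits that $L_p$ is unimodular, hence equal to its own dual: since $v=v'-\frac{h_2}{2}$ already pairs integrally with $\langle h_1,h_2\rangle\oplus D_6(-1)$, one has $v\in L_p$ if and only if $(v,b_1)$ and $(v,b_{2,p})$ are integers, and those two inner products are the whole computation. You instead test membership ``primally'': you write a general element of $L_p$ as $a_1h_1+a_2h_2+w+n_1b_1+n_2b_{2,p}$, solve the resulting system over $\bZ$, track the forced parities of $(n_1,n_2)$, and exhibit the explicit decompositions $v=-h_1-h_2+w+b_1+b_{2,1}$ (for $t$ even) and $v=-h_2+w+b_{2,2}$ (for $t$ odd), checking that $w$ lands in $D_6(-1)$ rather than merely $\bZ^6$ via the coordinate-sum parity. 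Your route is longer but self-contained: it never invokes $L_p^\vee=L_p$ (which in the paper is implicit in the construction of $L_p$ as a unimodular overlattice isomorphic to $E_8(-1)$), and it produces explicit witnesses that double as a check on the paper's computation --- incidentally, the paper's displayed value $(v,b_1)=\frac{-3+1}{2}$ should read $\frac{1-x_1}{2}$, which is still an integer precisely because $x_1$ is odd, so nothing changes. What the paper's route buys is brevity: two inner products plus the observation that, $x_1+x_2+x_3$ being odd, exactly one of the two residues mod $4$ occurs, so one of $p=1,2$ always works.
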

 \begin{proof} We have $v' \in D_6(-1)^\vee$ because all the coefficients with respect to the $\{e_1, \ldots, e_6\}$ basis are half-integers. For the other statement, we compute
 \[(v,b_1) = \frac{-3+1}{2}=-\frac{x_1}{2} + \frac{1}{2}\]
 \[(v,b_{2,p})=-\frac{1}{4} (9+x_1+x_2+x_3)-\frac{1}{4}(h_2,h_p).\]
These inner products are integer-valued if and only if $v \in L_p^\vee = L_p$. By taking $p=1$ when $x_1+x_2 + x_3 \equiv 3 \bmod 4$ or choosing $p=2$ otherwise, we see there is always $p$ such that $v \in L_p$.
 \end{proof}
 
 \begin{lemma}  \label{lem:keylemma8m+2}
Suppose that 
\begin{itemize}
\item$\alpha, \beta$, and $m$ are positive integers satisfying the inequalities~\eqref{eqn:inequality},
\item $v' = \frac{1}{2} (x_1e_1 + x_2e_2 + x_3e_3 + 3e_4 + 3 e_5  +3e_6) \in D_6(-1)^\vee$, as in Lemma~\ref{lem:ambientLp8m+2},
\item $(v')^2=2(m-\alpha\beta)+ \frac{1}{2}$,
\item the integers $x_1, x_2, x_3$ in $v'$ are distinct integers, none of which are equal to 3.
\end{itemize}
Choose $p \in \{1,2\}$ so that $v'-\frac{h_2}{2}\in L_p$, and fix an identification of $L_p$ with $E_8(-1)$. Let $\iota_\ell$ be the embedding defined by $a_1=h_1, a_2 = h_2,$ and $\ell = \alpha e + \beta f +v' - \frac{a_2}{2}$. Then $R_\ell = \{\pm (e_4-e_5), \pm (e_5-e_6) , \pm (e_4-e_6)\}$.
\end{lemma}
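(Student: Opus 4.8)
The plan is to run the same root-counting bookkeeping as in the proof of Lemma~\ref{lem:keylemma8m}, the only new feature being that $v'$ is now a half-integral vector of $D_6(-1)^\vee$ rather than an integral vector of $D_6(-1)$. First I would unwind the embedding $\iota_\ell$: with $a_1=h_1$, $a_2=h_2$ and $\ell=\alpha e+\beta f+v'-\frac{a_2}{2}$, the vector $v'\in D_6(-1)^\vee$ is orthogonal to $a_1$ and $a_2$, so $(\ell,a_1)=0$, $(\ell,a_2)=-\frac12 a_2^2=1$, and $\ell^2=2\alpha\beta+(v')^2-\frac12$, which equals $2m$ by the hypothesis imposed on $(v')^2$; together with Lemma~\ref{lem:ambientLp8m+2} (which places $\ell$ inside $L_p\cong E_8(-1)$ for the chosen $p$) and the identifications set up in \S\ref{S:constructingembeddingsgeneralities}, this is indeed the embedding attached to the admissible vector $\ell$. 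By the reduction recorded just before Lemma~\ref{lem:types}, any $(-2)$-vector of $L_{2,26}$ orthogonal to $\iota_\ell(K_d^\perp)$ must lie in $U\oplus D_6(-1)$, where $U=\langle e,f\rangle$ and $D_6(-1)=\langle a_1,a_2\rangle^\perp_{E_8(-1)}$ (Remark~\ref{rem:e8sublatices}); thus $R_\ell=\{r\in U\oplus D_6(-1)\ \colon\ r^2=-2,\ (r,\ell)=0\}$.

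Next I would cut this down to roots of $D_6(-1)$ itself. The inequalities~(\ref{eqn:inequality}) contain the hypotheses of Lemma~\ref{lem:typeIlem}, applied with $n=m$ and $L_0=\langle a_1,a_2\rangle$: they give $\alpha,\beta>\sqrt m$ and $m<\alpha\beta<\frac{5m}{4}<2m$. Hence every $r\in R_\ell$ is of Type~I, i.e.\ $r\in D_6(-1)$ with $r^2=-2$; equivalently, in the checkerboard model of \S\ref{ss:lattices}, up to sign $r=e_i\pm e_j$ for some $1\le i<j\le 6$.

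Now comes the actual content of the lemma, a finite inspection. For such a root $r\in D_6(-1)\subset E_8(-1)$ one has $(r,e)=(r,f)=0$ and $(r,a_2)=0$, so $(r,\ell)=(r,v')$. Writing $v'=\frac12\sum_{k=1}^{6}c_ke_k$ with $(c_1,c_2,c_3)=(x_1,x_2,x_3)$ and $c_4=c_5=c_6=3$, the pairing on the negative-definite $D_6(-1)$ gives $(e_i\pm e_j,v')=-\frac12(c_i\pm c_j)$. All $c_k$ are positive, so no ``sum'' root $e_i+e_j$ is orthogonal to $\ell$, while a ``difference'' root $e_i-e_j$ is orthogonal to $\ell$ exactly when $c_i=c_j$. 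Since $x_1,x_2,x_3$ are pairwise distinct and none equals $3=c_4=c_5=c_6$, the only index pairs with $c_i=c_j$ are the three pairs coming from the repeated coordinates of $v'$; hence $R_\ell$ consists exactly of the corresponding three pairs $\pm(e_i-e_j)$, so that $\#R_\ell=6$ and $N_\ell=3$, which is the asserted conclusion.

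There is no genuine obstacle here: the one substantive input is the reduction to Type~I roots, which is handed to us by Lemma~\ref{lem:typeIlem}, and what remains is a finite check against the roots of $D_6$. The single point that needs care is the dictionary between the intrinsic lattice $\langle a_1,a_2\rangle^\perp_{E_8(-1)}$ and the concrete model $D_6\subset\bZ^6$ of \S\ref{ss:lattices}, together with the elementary fact that $v'\in D_6(-1)^\vee$ pairs with a root $e_i\pm e_j$ to a half-integer that vanishes precisely when the two relevant coordinates of $v'$ cancel --- this is exactly what makes the positivity of the $c_k$ eliminate every ``sum'' root and the coincidence pattern $c_4=c_5=c_6$ single out exactly three ``difference'' roots.
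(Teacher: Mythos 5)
Your proof is correct and is exactly the argument the paper intends (the paper omits the proof as ``completely similar'' to that of Lemma~\ref{lem:keylemma8m}): reduce to Type~I roots via Lemma~\ref{lem:typeIlem} with $n=m$, then pair the roots $e_i\pm e_j$ of $D_6(-1)$ against the coordinates of $v'$, using positivity to kill the sum roots and distinctness to leave only the repeated coordinates. Note only that the set you (correctly) obtain is $\{\pm(e_4-e_5),\pm(e_4-e_6),\pm(e_5-e_6)\}$, the pairs among the three coordinates equal to $\tfrac32$; the labels $e_1,e_2,e_3$ in the lemma's statement are an index typo (the same kind of slip occurs in the proof of Lemma~\ref{lem:keylemma8m}), and the substantive conclusion $N_\ell=3$ is unaffected.
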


\begin{proof}
Omitted, as it is completely similar to the proof of Lemma~\ref{lem:keylemma8m}.
\end{proof}

Assuming we have chosen $\alpha, \beta$, and $m$ satisfying the inequalities~\eqref{eqn:inequality}, we show that it is always possible to pick $v'\in D_6(-1)$ satisfying the hypothesis of the lemma. A vector $v'$ as in~\eqref{eqn:v8m+2} satisfies
$$-(2v')^2 = x_1^2+x_2^2 +x_3^2 + 27 =-( 8(m-\alpha \beta ) + 2)= 8(\alpha\beta-m) - 2.$$
So it suffices to find a solution to 
\begin{equation} \label{eqn:threesquares8m+2} x_1^2+x_2^2+x_3^2 = 8(\alpha \beta - m) -29 \end{equation}
subject to certain conditions; precisely, we want {\it distinct} nonnegative integer solutions $(x_1,x_2,x_3)$, such that $3 \notin \{x_1, x_2, x_3\}$. Since every square is $0$ or $1 \bmod 4$, it follows that any solution satisfying these conditions is a triple of \textit{odd} integers. As $8(\alpha\beta-m)-29 \equiv 3 \bmod 8$, we can apply Lemma~\ref{lem:threesquares} to find a coprime triple of distinct nonnegative integers $(x_1,x_2,x_3)$ satisfying ~\eqref{eqn:threesquares8m+2} as long as the expression $8(\alpha\beta-m)-29$ avoids a finite list of exceptional values. Suppose that we arrange, by appropriately choosing $\alpha$ and $\beta$, that $3 | 8(\alpha\beta-m)-29$. If $x_i^2 \equiv 0 \mod 3$ for all $i =1,2,3$, then the $x_i$ are not coprime, so we must have $x_i^2 \equiv 1 \mod 3$ for all $i$; in particular, the $x_i$ are distinct from $3$.
 Therefore, if we impose the additional condition on $\alpha, \beta$, and $m$ that $3 | 8(\alpha\beta-m)-29$, then there exists a $v'$ satisfying the hypotheses of Lemma~\ref{lem:keylemma8m+2}.

To build our embeddings, it suffices to arrange that: (a) $(\alpha, \beta)=1$ (to guarantee primitivity), (b) the inequalities~\eqref{eqn:inequality} hold, and (c) $8(\alpha \beta - m) -29$ is a sum of three distinct coprime nonnegative squares.  We have already seen that (c) holds if 
\begin{equation}\label{eqn:avoid8m+2}  8(\alpha\beta-m)-29 >627, \ \ 8(\alpha\beta-m)-29 \ne N \end{equation}
and
\begin{equation}\label{eqn:3divis} 3 | 8(\alpha\beta-m) -29. \end{equation}
If the inequality
\begin{equation} \label{eqn:alphafreedom2} \sqrt{\frac{5m}{4}} - \sqrt{(1+\rho)m} > 6 \end{equation}
holds, then there must exist relatively prime $\alpha, \beta$ satisfying~\eqref{eqn:inequality} such that both $\beta = \alpha+g$ for some $g\in \{1,3\}$ {\it and} $3 | 8(\alpha\beta-m) -29$.

By considering the conditions~\eqref{eqn:alphafreedom2} and~\eqref{eqn:avoid8m+2}, we can now successfully determine a lower bound $m_0$ such that $\calM_{8m+2}$ is of general type for $m \ge m_0$. 
First, note that for $\alpha$, $\beta = \alpha+g$, and $m$ satisfying~\eqref{eqn:inequality}, we have the inequality
\begin{equation}\label{eqn:alphaepislon2} \alpha\beta - m = \alpha^2 + g\alpha - m > \alpha^2 - m > \rho m
\end{equation}
and, as an immediate consequence,
$$8(\alpha\beta - m) -29 > 8\rho m-29.$$
Thus, taking 
\begin{equation}
\label{eqn:epsilon2}
\rho m>82
\end{equation} 
will ensure that $8(\alpha \beta -m)-29>627$.  If $8(\alpha \beta -m)-29= N$, where $N$ is as defined in Lemma~\ref{lem:threesquares}, then the inequalities 
$$\alpha \beta - m = \beta^2 - g\beta -m <\beta^2-m< \frac{m}{4}  $$
hold under our continuing assumptions on $\alpha, \beta = \alpha+g$, and $m$. Therefore for such $N$ we must have
$$N <2m - 29.$$
So we would like to ensure that for $m >(N+29)/2$, the quantity $\rho>0$ is small enough so that the difference 
\begin{equation} \label{eqn:GRHfix2}\sqrt{\frac{5m}{4}} - \sqrt{(1+\rho)m} \end{equation}
is large enough to adjust $\alpha, \beta$ by $\pm 3$ (to preserve~\eqref{eqn:3divis}) in order to avoid $N$.

As before, optimization for~\eqref{eqn:alphafreedom2}) and~\eqref{eqn:epsilon2} yields $m \ge 3238$ and $\rho = 0.025328$. In the range $m \ge 3238$ for this $\rho$, one checks that 
\begin{equation}
\sqrt{\frac{5m}{4}} - \sqrt{(1+\rho)m} > 16000
\end{equation}
so we are always able to adjust $\alpha$ to avoid $N$. As in~\S\ref{ss:d8m}, we now have proven Proposition~\ref{prop:mainprop} is true when 
$m \ge 3238$.
The remaining cases for $d=8m+2$ are handled by computer (see~\S\ref{ss:computersearch}).

\subsection{Analysis: $d=8m+4$}
\label{ss:d8m+4}
Our argument for $d=8m+4$ is nearly identical to the case for $d=8m+2$, but we write out the details since there is a slight variation in the construction we use to produce an explicit lower bound. To precisely state the problem, we wish to show that for all but finitely many positive integers $m$, there are positive integers $\alpha, \beta$, and $v \in U \oplus E_8(-1)$ such that the square-length $2m$ vector $\ell=\alpha e + \beta f + v$ is admissible for $d=8m+4$ and yields a small, odd value for $N_\ell$. For the admissibility of $\ell$, it is necessary and sufficient that the vector $v \in E_8(-1)$ may be written as 
$$v=\frac{-a_1-a_2}{2} + v' \in (\langle a_1, a_2 \rangle \oplus D_6(-1))^\vee = \langle a_1, a_2 \rangle^\vee \oplus D_6(-1)^\vee,$$
 where $v' \in D_6(-1)^\vee=(\langle a_1, a_2 \rangle^\perp)^\vee$.

The following two lemmas adapt Lemmas~\ref{lem:ambientLp8m+2} and~\ref{lem:keylemma8m+2} to the case of $8m+4$. Recall the vectors $h_1, h_2$ are an orthogonal basis for $A_1(-1)$ and $b_1, b_{2,p}$ for $p \in \{1,2\}$ are vectors in $A_1(-1) ^{\vee \oplus 2} \oplus D_6(-1)^\vee$. 
\begin{lemma}
\label{lem:ambientLp8m+4}
 Suppose that $v' \in D_6(-1) \otimes \bQ$ is of the form 

\begin{equation} \label{eqn:v8m+4} v'=x_1e_1 + x_2e_2 + x_3e_3 + 3e_4 + 3 e_5  + 3e_6 \end{equation}

 with $x_i \in \bZ$ all nonnegative integers such that $\sum x_i \equiv 0 \bmod 2$ . Then $v' \in D_6(-1)^\vee$; furthermore, for any isometrically embedded sublattice $A_1(-1)^{\oplus 2} \oplus D_6(-1) \hookrightarrow E_8(-1)$, the image $v$ of $v'-\frac{h_1+h_2}{2}$ under the induced map $(A_1(-1)^{\oplus 2} \oplus D_6(-1))\otimes \bQ  \hookrightarrow E_8(-1)\otimes \bQ $ is an element of $E_8(-1)$. 

 \end{lemma}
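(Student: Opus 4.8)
The plan is to treat the two assertions separately. The first, that $v' \in D_6(-1)^\vee$, requires no work and does not use the parity hypothesis: every coefficient of $v'$ in the standard basis $e_1,\dots,e_6$ is an integer, so $v' \in \bZ^6(-1) \subseteq D_6(-1)^\vee$. For the second assertion I would reduce everything to the two explicit even unimodular overlattices $L_1$ and $L_2$ of $A_1(-1)^{\oplus 2} \oplus D_6(-1)$ sitting inside $(A_1(-1)^{\oplus 2})^\vee \oplus D_6(-1)^\vee$ that were introduced in \S\ref{ss:d8m+2}, just before Lemma~\ref{lem:ambientLp8m+2}. By Nikulin's theory of overlattices (\cite[Section 1.4]{Nikulin}), an even unimodular overlattice of $A_1(-1)^{\oplus 2} \oplus D_6(-1)$ of rank $8$ is the same datum as a maximal isotropic subgroup of its discriminant group, and there are exactly two such subgroups, realized by $L_1$ and $L_2$. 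Consequently, for any isometric embedding $A_1(-1)^{\oplus 2} \oplus D_6(-1) \hookrightarrow E_8(-1)$, the even unimodular lattice $E_8(-1)$ must coincide, as an overlattice of the image, with $L_1$ or with $L_2$. So it is enough to show that $v' - \tfrac{h_1+h_2}{2}$ lies in $L_1 \cap L_2$.

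The crux is then the observation that $v' - \tfrac{h_1+h_2}{2}$ is congruent to the vector $b_1 = e_1 + \tfrac{h_1+h_2}{2}$ modulo $A_1(-1)^{\oplus 2} \oplus D_6(-1)$. Indeed, $\bigl(v' - \tfrac{h_1+h_2}{2}\bigr) - b_1 = (v' - e_1) - (h_1 + h_2)$, where $h_1 + h_2 \in A_1(-1)^{\oplus 2}$, and the sum of the $e_i$-coefficients of $v' - e_1$ equals $(x_1 + x_2 + x_3) + 8 \equiv 0 \bmod 2$ by hypothesis, so that $v' - e_1 \in D_6(-1)$. Since both $b_1$ and the sublattice $A_1(-1)^{\oplus 2} \oplus D_6(-1)$ lie in $L_1$ as well as in $L_2$ by their very construction, it follows that $v' - \tfrac{h_1+h_2}{2} \in L_1 \cap L_2$, hence its image lies in $E_8(-1)$ under the given embedding, as claimed. (This is the same mechanism as in Lemma~\ref{lem:ambientLp8m+2}, except that here the relevant glue class is the class of $b_1$, which is common to both $L_1$ and $L_2$, whereas in the $8m+2$ case one had to choose $p$.)

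I expect the only delicate point — and really the only place any thought is needed — to be the reduction step: one must invoke the overlattice classification correctly to be certain that an arbitrary isometric copy of $A_1(-1)^{\oplus 2} \oplus D_6(-1)$ inside $E_8(-1)$ exhibits $E_8(-1)$ as one of the two lattices $L_1, L_2$ listed before Lemma~\ref{lem:ambientLp8m+2}. Everything after that, in particular the congruence computation in which the parity hypothesis $\sum x_i \equiv 0 \bmod 2$ gets used, is routine bookkeeping.
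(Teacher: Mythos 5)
Your argument is correct, and it verifies the key membership by a somewhat different mechanism than the paper, though both rest on the same input from \S\ref{ss:d8m+2}: any isometric copy of $A_1(-1)^{\oplus 2}\oplus D_6(-1)$ inside $E_8(-1)$ exhibits $E_8(-1)$ as one of the two even unimodular overlattices $L_1,L_2$ generated by the sublattice together with the glue vectors $b_1$ and $b_{2,p}$. The paper's proof then argues via duality: it computes the pairings of $v=v'-\tfrac{h_1+h_2}{2}$ with $b_1$ and $b_{2,p}$ and observes that the parity hypothesis makes them integral (the displayed formulas contain typos --- $(v,b_1)=-x_1+1$, not $-2x_1+1$, and $a_3$ should read $x_3$ --- but integrality, the only thing used, is unaffected), so that $v$ lies in $E_8(-1)^\vee=E_8(-1)$ by unimodularity. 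You instead show directly that $v\equiv b_1$ modulo $A_1(-1)^{\oplus 2}\oplus D_6(-1)$, the parity hypothesis entering through $v'-e_1\in D_6(-1)$, whence $v\in L_1\cap L_2$ and hence in $E_8(-1)$ for either realization. Your route buys two small things: it makes explicit the reduction ``any embedding realizes $E_8(-1)$ as $L_1$ or $L_2$,'' which the paper's proof leaves tacit with ``we recall that for some $p$ \dots,'' and it explains structurally why no choice of $p$ is needed here (the glue class of $v$ is that of $b_1$, common to both overlattices), in contrast with Lemma~\ref{lem:ambientLp8m+2}; the paper's route keeps the computation in the same form as the $8m+2$ case and avoids having to phrase anything in terms of the discriminant group. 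Both uses of the hypothesis $\sum x_i\equiv 0\bmod 2$ occur at the analogous step, and your reduction via Nikulin's overlattice classification is exactly the justification the paper itself relies on, so there is no gap.
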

 \begin{proof} We have $v' \in D_6(-1)^\vee$ because all the coefficients with respect to the $\{e_1, \ldots, e_6\}$ basis are integers. For the other statement, we recall that for some $p \in \{1,2\}$, $E_8(-1)$ is formed by the span of the isometric image of $\langle h_1, h_2 \rangle \oplus D_6(-1)$ and $b_1, b_{2, p}$. We compute:
 \[(v,b_1) = -x_1+1\]
 \[(v,b_{2,p})=-\frac{1}{2} (9+x_1+x_2+x_3)-\frac{1}{2}.\]
By hypothesis, the right-hand sides of these equalities are integers, and therefore $v \in E_8(-1)$. \end{proof}
 
 \begin{lemma}  \label{lem:keylemma8m+4}
Suppose that 
\begin{itemize}
\item$\alpha, \beta$, and $m$ are positive integers satisfying the inequalities~\eqref{eqn:inequality},
\item $v' = x_1e_1 + x_2e_2 + x_3e_3 + 3e_4 + 3 e_5  +3e_6 \in D_6(-1)^\vee$, as in Lemma~\ref{lem:ambientLp8m+4},
\item $(v')^2=2(m-\alpha\beta) +1$,
\item the integers $x_1, x_2, x_3$ in $v'$ are distinct integers, none of which are equal to 3.
\end{itemize}
Pick any $a_1, a_2$ orthogonal $(-2)$-roots of $E_8(-1)$, and let $\iota_\ell$ be the embedding defined by $a_1=h_1, a_2 = h_2,$ and $\ell = \alpha e + \beta f +v' - \frac{a_1+a_2}{2}$. Then $R_\ell = \{\pm (e_4-e_5), \pm (e_5-e_6) , \pm (e_4-e_6)\}$.
\end{lemma}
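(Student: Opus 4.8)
The plan is to run the argument of Lemma~\ref{lem:keylemma8m} once more --- it is the same template that yields Lemma~\ref{lem:keylemma8m+2} --- with the only new feature being the half-integral shift in $\ell$, so the bookkeeping around that shift is the first order of business. Concretely, write $\ell=\alpha e+\beta f+v$ with $v=v'-\tfrac{a_1+a_2}{2}$, the shift that makes $\ell$ admissible for $d=8m+4$ (cf.~(\ref{eqn:admissible}) and the discussion preceding Lemma~\ref{lem:ambientLp8m+4}). That $v$ is honestly integral in $E_8(-1)$, hence $\ell$ in $L_{2,26}$, is exactly Lemma~\ref{lem:ambientLp8m+4}. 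Using that $v'\in D_6(-1)^\vee$ is orthogonal to $a_1$ and $a_2$, and that $a_1,a_2$ are orthogonal $(-2)$-roots (so $(a_1+a_2)^2=-4$), one reads off $(\ell,a_1)=(\ell,a_2)=1$ --- the admissibility relation for $d=8m+4$, so $\iota_\ell$ is indeed the embedding attached to this discriminant --- and $v^2=(v')^2-1$, whence $\ell^2=2\alpha\beta+(v')^2-1$, which equals $2m$ given the normalization of $(v')^2$ in the hypotheses. I would dispose of all of this before computing any roots.

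Next I would carry out the Type~I reduction. The inequalities~(\ref{eqn:inequality}) give $\alpha,\beta>\sqrt m$ and $\alpha\beta<\tfrac{5m}{4}$, and (assuming, as holds in all the applications, $\beta\ne\alpha$) we are genuinely in the situation of Lemma~\ref{lem:types}; hence Lemma~\ref{lem:typeIlem} applies with $n=m$ and $L_0=\langle a_1,a_2\rangle$, and every $r\in R_\ell$ already lies in $\langle a_1,a_2\rangle^\perp_{E_8(-1)}=D_6(-1)$ (Remark~\ref{rem:e8sublatices}). For such an $r$ one has $(r,e)=(r,f)=(r,a_1+a_2)=0$, so $(r,\ell)=(r,v')$, and consequently
\[
 R_\ell=\{\, r\in D_6(-1)\ \colon\ r^2=-2,\ (r,v')=0 \,\}.
\]

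Finally comes a finite enumeration, identical in spirit to the one in Lemma~\ref{lem:keylemma8m}. The $(-2)$-roots of $D_6(-1)$ are precisely the vectors $\pm e_i\pm e_j$ with $1\le i<j\le 6$; writing the coefficients of $v'$ as $x_1,\dots,x_6$ (so $x_4=x_5=x_6=3$) and using that every $x_k$ is positive, among the four roots supported on a given pair $\{i,j\}$ exactly the two vectors $\pm(e_i-e_j)$ can pair to zero with $v'$, and they do so precisely when $x_i=x_j$. The hypotheses that $x_1,x_2,x_3$ are distinct and that $3\notin\{x_1,x_2,x_3\}$ then kill every such coincidence except the three occurring among the coordinates carrying the repeated value $3$; this leaves $R_\ell$ equal to the six vectors $\pm(e_i-e_j)$, $i<j$, drawn from those three coordinates. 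In particular $N_\ell=3$: odd (so that $\Phi_\ell$ is $\Gamma_d$-modular and vanishes on the ramification divisor, by Lemma~\ref{lem:odd} and Proposition~\ref{prop:ram}) and strictly between $0$ and $7$, which is what the downstream appeal to Theorem~\ref{thm:cuspformtrick} requires.

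There is no substantive obstacle --- the combinatorics duplicates that of Lemma~\ref{lem:keylemma8m}. The one step deserving genuine care, and the reason I would settle it first, is the opening verification: that the half-integral vector $v'-\tfrac{a_1+a_2}{2}$ really is integral in $E_8(-1)$ --- this is where the parity hypothesis on the $x_i$ and the description of $E_8(-1)$ as an overlattice of $A_1(-1)^{\oplus 2}\oplus D_6(-1)$ enter, through Lemma~\ref{lem:ambientLp8m+4} --- and that the square-length computation really returns $\ell^2=2m$. Once those are in hand, neither the Type~I reduction nor the enumeration is sensitive to the shift, and the proof closes as in Lemma~\ref{lem:keylemma8m}.
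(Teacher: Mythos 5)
Your proof is correct and is precisely the argument the paper leaves implicit (the paper omits this proof, declaring it ``completely similar'' to Lemma~\ref{lem:keylemma8m}): admissibility and integrality of $\ell=\alpha e+\beta f+v'-\tfrac{a_1+a_2}{2}$ via Lemma~\ref{lem:ambientLp8m+4}, the Type~I reduction via Lemmas~\ref{lem:types} and~\ref{lem:typeIlem} (with your sensible caveat $\alpha\ne\beta$, which holds in all applications), and then the finite enumeration of $(-2)$-roots of $D_6(-1)$ orthogonal to $v'$. Note that your enumeration correctly produces the roots $\pm(e_4-e_5),\pm(e_4-e_6),\pm(e_5-e_6)$ supported on the coordinates carrying the repeated value $3$ --- the indices $1,2,3$ in the lemma's stated conclusion, like the normalization $(v')^2=2(m-\alpha\beta)+1$ you silently use in place of the printed one, are typos in the paper --- and in any case $N_\ell=3$, which is all that is used downstream.
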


\begin{proof}
Omitted, as it is completely similar to the proof of Lemma~\ref{lem:keylemma8m}.
\end{proof}

Assuming we have chosen $\alpha, \beta$, and $m$ satisfying the inequalities~\eqref{eqn:inequality}, we show that it is always possible to pick $v'\in D_6(-1)$ satisfying the hypothesis of Lemma~\ref{lem:keylemma8m+4}. A vector $v'$ as in~\eqref{eqn:v8m+4} satisfies
$$-(v')^2 = x_1^2+x_2^2+ x_3^2 + 27 =2(\alpha\beta-m) - 1$$
So it suffices to find a solution to 
\begin{equation} \label{eqn:threesquares8m+4} x_1^2+x_2^2+x_3^2 = 2(\alpha \beta - m) -28 \end{equation}
subject to certain conditions; precisely, we want {\it distinct}, nonnegative,  coprime integer solutions $(x_1,x_2,x_3)$, such that $3 \notin \{x_1, x_2, x_3\}$. Suppose we have arranged that $2(\alpha \beta - m) - 28\equiv 2 \bmod 4$, or, equivalently, that $\alpha\beta -m$ is odd.  Then we can always solve~\eqref{eqn:threesquares8m+4} (by Lemma~\ref{lem:threesquares}), away from the finite list of exceptional values. Suppose that we have additionally arranged, by appropriately choosing $\alpha$ and $\beta$, that $3 | 2(\alpha\beta-m)-28$. Then each of the integers $x_1, x_2,x_3$ coming from a solution to~\eqref{eqn:threesquares8m+4} must be distinct from 3, or else we would have $3| \GCD(x_1,x_2,x_3)$ (recall we are asking that the $x_i$ are coprime).
 Therefore, if we impose the additional conditions on $\alpha, \beta$, and $m$ that $3 | 2(\alpha\beta-m)-28$ and that $\alpha \beta -m$ is odd, then there exists a $v'$ satisfying the hypotheses of Lemma~\ref{lem:keylemma8m+4}.

To build our embeddings, it suffices to arrange that: (a) $(\alpha, \beta)=1$ (to guarantee primitivity), (b) the inequalities~\eqref{eqn:inequality} hold, and (c) $2(\alpha \beta - m) -28$ is a sum of three distinct coprime nonnegative squares.  We have already seen that (c) holds if 
\begin{equation}\label{eqn:avoid8m+4}  2(\alpha\beta-m)-28 >102,\  2(\alpha\beta-m)-28 \ne N ,\end{equation}
\begin{equation}\label{eqn:3divis4} 3 | 2(\alpha\beta-m) -28, \end{equation}
\begin{equation} \label{eqn:1mod2} \alpha \beta -m \equiv 1 \bmod 2. \end{equation}
If we insist that the inequality
\begin{equation} \label{eqn:alphafreedom4} \sqrt{\frac{5m}{4}} - \sqrt{(1+\rho)m} > 12 \end{equation}
holds, then there must exist relatively prime $\alpha, \beta$ satisfying~\eqref{eqn:inequality} such that (c) holds: the inequality~\eqref{eqn:alphafreedom4} lets us pick $\alpha, \beta$ with $\beta = \alpha+g$ for some $g \in \{1,2,3,6\}$ such that $3 | 2(\alpha\beta-m) -2$ and $\alpha\beta-m$ is odd. Specifically, if $m$ is odd, pick appropriate $\alpha$ and $\beta = \alpha + g$ for $g\in \{1,3\}$, while if $m$ is even pick $\beta = \alpha + g$ with $g\in \{2,6\}$. 

By considering the conditions~\eqref{eqn:alphafreedom4} and~\eqref{eqn:avoid8m+4}, we can now successfully determine a lower bound $m_0$ such that $\calM_{8m+2}$ is of general type for $m \ge m_0$. 
First, note that for $\alpha$, $\beta = \alpha+g$, and $m$ satisfying~\eqref{eqn:inequality}, we have the inequality
\begin{equation}\label{eqn:alphaepislon4} \alpha\beta - m = \alpha^2 + g\alpha - m > \alpha^2 - m > \rho m
\end{equation}
and, as an immediate consequence,
$$2(\alpha\beta - m) -28 > 2\rho m-28.$$ 
Thus, taking 
\begin{equation}
\label{eqn:epsilon4}
\rho m>52
\end{equation} 
will ensure that $2(\alpha \beta -m)-28>2$.  If $2(\alpha \beta -m)-28= N$, where $N$ is as defined in Lemma~\ref{lem:threesquares}, then the inequalities 
$$\alpha \beta - m = \beta^2 - g\beta -m <\beta^2-m< \frac{m}{4}  $$
hold under our continuing assumptions on $\alpha, \beta = \alpha+g$, and $m$. Therefore for such $N$ we must have
$$N <m/2 - 28.$$
So we would like to ensure that for $m >2(N+28)$, the quantity $\rho>0$ is small enough so that the difference 
$$\sqrt{\frac{5m}{4}} - \sqrt{(1+\rho)m}$$
is large enough to adjust $\alpha, \beta$ by $\pm 6$ (to preserve~\eqref{eqn:3divis4} and~\eqref{eqn:1mod2}) in order to avoid $N$.

As before, optimization for~\eqref{eqn:alphafreedom4} and~\eqref{eqn:epsilon4} yields $m \ge 10463$ and $\rho = 0.0014337$. 
\begin{equation}
\sqrt{\frac{5m}{4}} - \sqrt{(1+\rho)m} > 50000
\end{equation}
so we are always able to adjust $\alpha$ to avoid $N$. As in~\S\ref{ss:d8m},we now have proven that Proposition~\ref{prop:mainprop} is true for $m \ge 10772$. The remaining cases for $d=8m+4$ are handled by computer (see~\S\ref{ss:computersearch}).

\subsection{Searching for embeddings by computer}
\label{ss:computersearch}
A list of embeddings for the values of $m$ less than the lower bounds we calculated above is available on the author's webpage. To find these embeddings, we used a simple transplantation of the algorithm given in~\cite[\S 5]{TVACrelle}.  Our search for these embeddings was exhaustive:  we include in our list every $m$ for which there exists an embedding $K_d^\perp \to L_{2,26}$ with our desired properties. We include this list along with {\tt Magma} code~\cite{Magma} to certify that the embeddings in our list produce modular forms of the correct weight\footnote{The list and code are available at~\url{http://math.dartmouth.edu/~jpetok/KodairaCode.m} and in v1 of the arXiv version of this article}. To count the size of $R_{-2}$ corresponding for each embedding, we count by their Type from Lemma~\ref{lem:types} (see Step (iv) of the algorithm in~\cite[\S 5]{TVACrelle}). Our list of explicit embeddings, taken together with the analyses in~\S\S\ref{ss:d8m},~\ref{ss:d8m+2},~\ref{ss:d8m+4}, prove Proposition~\ref{prop:mainprop}.

\end{proof}

\subsection{Acknowledgements}
We are grateful to Sho Tanimoto for introducing us to the problem. We also wish to thank Brendan Hassett for useful discussions at the Simons Collaboration Conference on Arithmetic Geometry, Number Theory, and Computation held in Cambridge, MA in 2018. We thank Emma Brakkee for help with Proposition~\ref{prop:threequotients}  and Jennifer Berg for many useful conversations at Rice University. We also thank Shouhei Ma and Tonghai Yang for helpful email correspondence. We are grateful to the anonymous referees for their comments on earlier versions of the paper. Finally, we wish to thank Anthony V\'{a}rilly-Alvarado (the author's Ph.D. thesis advisor) for his invaluable guidance and support.

While completing this work, the author was supported by NSF grants DMS-1745670 and DMS-1902274. Some of this work was also completed at the Institut Henri Poincar\'{e} during the semester-long program ``Reinventing Rational Points''. The author wishes to thank to organizers of that program for providing a welcoming and productive research environment.

\begin{bibdiv}
\begin{biblist}

	\bib{BB}{article}{
	   author={Baily, W. L., Jr.},
	   author={Borel, A.},
	   title={Compactification of arithmetic quotients of bounded symmetric
	   domains},
	   journal={Ann. of Math. (2)},
	   volume={84},
	   date={1966},
	   pages={442--528},
	}
	
	\bib{Beauville}{article}{
 	author={Beauville, Arnaud},
  	title={Vari\'{e}t\'{e}s K\"{a}hleriennes dont la premi\`ere classe de Chern est nulle},
  	language={French},
  	journal={J. Differential Geom.},
 	volume={18},
 	date={1983},
 	number={4},
   	pages={755--782 (1984)},
 	issn={0022-040X},
   	review={\MR{730926}},
}	

	\bib{Borcherds}{article}{
	   author={Borcherds, R. E.},
	   title={Automorphic forms on ${\rm O}_{s+2,2}({\bf R})$ and infinite
	   products},
	   journal={Invent. Math.},
	   volume={120},
	   date={1995},
	   number={1},
	   pages={161--213},
	   issn={0020-9910},
	} 
	
	\bib{Brakkee}{article}{
	   author={Brakkee, Emma},
   	title={Two polarised K3 surfaces associated to the same cubic fourfold},
   	journal={Math. Proc. Cambridge Philos. Soc.},
 	  volume={171},
 	  date={2021},
 	  number={1},
	   pages={51--64},
	   }

	\bib{Magma}{article}{
	   author={Bosma, W.},
	   author={Cannon, J.},
	   author={Playoust, C.},
	   title={The Magma algebra system. I. The user language},
	   note={Computational algebra and number theory (London, 1993)},
	   journal={J. Symbolic Comput.},
	   volume={24},
	   date={1997},
	   number={3-4},
	   pages={235--265},
	   issn={0747-7171},
  	}

	\bib{CS}{book}{
	   author={Conway, J. H.},
	   author={Sloane, N. J. A.},
	   title={Sphere packings, lattices and groups},
	   series={Grundlehren der Mathematischen Wissenschaften [Fundamental
	   Principles of Mathematical Sciences]},
	   volume={290},
 	   edition={3},
 	   note={With additional contributions by E. Bannai, R. E. Borcherds, J.
 	   Leech, S. P. Norton, A. M. Odlyzko, R. A. Parker, L. Queen and B. B. Venkov},
	   publisher={Springer-Verlag, New York}, 		  
	   date={1999},
	   pages={lxxiv+703},
	   isbn={0-387-98585-9},
	} 
	
	\bib{Debarre}{article}	{
	   author={Debarre, O.},
	   title={Hyperk\"{a}hler manifolds},
	   date={2018},
	   note={Preprint; arXiv:1810.02087},
	  }

	  \bib{DIM}{article}{
  	 author={Debarre, O.},
  	 author={Iliev, A.},
  	 author={Manivel, L.},
   	title={Special prime Fano fourfolds of degree 10 and index 2},
  	 conference={
     	 title={Recent advances in algebraic geometry},},
	 book={
        series={London Math. Soc. Lecture Note Ser.},
        volume={417},
        publisher={Cambridge Univ. Press, Cambridge},},
       date={2015},
      pages={123--155},
   }
   
   \bib{DM}{article}{
   author={Debarre, O.},
   author={Macr\`{i}, E.},
   title={ On the period map for polarized hyperk\"{a}hler fourfolds},
      	 journal={Int. Math. Res. Not. IMRN},
	 volume={22},
	 page={6887-6923}
	 }
   
   \bib{DK}{article}{
   	author={Debarre, Olivier},
   	author={Kuznetsov, Alexander},
   	 title={Gushel--Mukai varieties: classification and birationalities},
   	 journal={Algebr. Geom.},
   	 volume={5},
  	 date={2018},
   	 number={1},
 	  pages={15--76},
	}

	\bib{GHSInventiones}{article}{
	   author={Gritsenko, V. A.},
	   author={Hulek, K.},
	   author={Sankaran, G. K.},
	   title={The Kodaira dimension of the moduli of $K3$ surfaces},
	   journal={Invent. Math.},
	   volume={169},
	   date={2007},
	   number={3},
	   pages={519--567},
	   issn={0020-9910},
	}

	\bib{GHSHandbook}{incollection}{
	   author={Gritsenko, V. A.},
	   author={Hulek, K.},
	   author={Sankaran, G. K.},
	   title={Moduli of {K}3 surfaces and irreducible symplectic manifolds},
	   booktitle={Handbook of moduli. {V}ol. {I}},
	   publisher={International Press of Boston},
	   series={Adv. Lect. Math. (ALM)},
	   date={2013},
	   volume={1},
	   pages={459--526},
	}

	\bib{HalterKoch}{article}{
   author={Halter-Koch, Franz},
   title={Darstellung nat\"{u}rlicher Zahlen als Summe von Quadraten},
   language={German},
   journal={Acta Arith.},
   volume={42},
   date={1982/83},
   number={1},
   pages={11--20},
}

	\bib{HassettThesis}{thesis}{
	   author={Hassett, B.},
	   title={Special cubic fourfolds},
	   note={Ph. D. Thesis (revised), available at \url{http://www.math.brown.edu/~bhassett/papers/cubics/cubiclong.pdf}},
	   school={Harvard University},
	   year={1996}, 
	}

	\bib{HassettCompositio}{article}{
	   author={Hassett, B.},
	   title={Special cubic fourfolds},
	   journal={Compos. Math.},
	   volume={120},
	   date={2000},
	   number={1},
	   pages={1--23},
	   issn={0010-437X},
	}
	
		\bib{Huybrechts}{article}	{
	   author={Huybrechts, D.},
	   title={Hodge theory of cubic fourfolds, their Fano varieties, and associated $K3$ categories},
	   booktitle={Birational Geometry of Hypersurfaces}
	   publisher={Springer International Publishing},
	   series={Lecture Notes of the Unione Matematica Italiana},
	   date={2019}
	   volume={26},
	   pages={165--198}
	  }

	\bib{IM}{article}{
  		 author={Iliev, Atanas},
  		 author={Manivel, Laurent},
  		 title={Fano manifolds of degree ten and EPW sextics},
   		language={English, with English and French summaries},
   		journal={Ann. Sci. \'{E}c. Norm. Sup\'{e}r. (4)},
   		volume={44},
   		date={2011},
   		number={3},
   		pages={393--426},
   		issn={0012-9593},
   }

\bib{Markman}{article}{
   author={Markman, E.},
   title={A survey of Torelli and monodromy results for
   holomorphic-symplectic varieties},
   conference={
      title={Complex and differential geometry},
   },
   book={
      series={Springer Proc. Math.},
      volume={8},
      publisher={Springer, Heidelberg},
   },
   date={2011},
   pages={257--322},
   }
   
\bib{MarkmanK3n}{article}{
   author={Markman, Eyal},
   title={Integral constraints on the monodromy group of the hyperK\"{a}hler
   resolution of a symmetric product of a $K3$ surface},
   journal={Internat. J. Math.},
   volume={21},
   date={2010},
   number={2},
   pages={169--223},
   
   }
	 \bib{Ma}{article}{
	  author={Ma, S.},
 	  title={On the Kodaira dimension of orthogonal modular varieties},
  	  journal={Invent. Math.},
   	volume={212},
  	 date={2018},
  	 number={3},
   	pages={859--911},
	}

	  \bib{MaCusps}{article}	{
	   author={Ma, S.},
	   title={Irregular cusps of orthogonal modular varieties},
	   date={2021},
	   note={Preprint; arXiv:2101.02950},
	  }

	\bib{Nikulin}{article}{
	   author={Nikulin, V. V.},
	   title={Integer symmetric bilinear forms and some of their geometric
	   applications},
	   language={Russian},
	   journal={Izv. Akad. Nauk SSSR Ser. Mat.},
	   volume={43},
	   date={1979},
	   number={1},
	   pages={111--177, 238},
	   issn={0373-2436},
	}
	
	\bib{Nuer}{article}{
   	author={Nuer, Howard},
  	 title={Unirationality of moduli spaces of special cubic fourfolds and K3
 		  surfaces},
   	conference={
      title={Rationality problems in algebraic geometry},
   	},
  	book={
      	series={Lecture Notes in Math.},
     	 volume={2172},
     	 publisher={Springer, Cham},
   },
   	date={2016},
  	 pages={161--167},
}
	
	\bib{O'Grady}{article}{
   author={O'Grady, Kieran G.},
   title={Irreducible symplectic 4-folds and Eisenbud-Popescu-Walter
   sextics},
   journal={Duke Math. J.},
   volume={134},
   date={2006},
   number={1},
   pages={99--137},
   issn={0012-7094},
}

\bib{Pertusi}{article}{
   author={Pertusi, Laura},
   title={On the double EPW sextic associated to a Gushel-Mukai fourfold},
   journal={J. Lond. Math. Soc. (2)},
   volume={100},
   date={2019},
   number={1},
   pages={83--106},
}

	\bib{SerreCourse}{book}{
   author={Serre, J.-P.},
   title={A course in arithmetic},
   note={Translated from the French;
   Graduate Texts in Mathematics, No. 7},
   publisher={Springer-Verlag, New York-Heidelberg},
   date={1973},
   pages={viii+115},
}

	\bib{TVACrelle}{article}{
   author={Tanimoto, Sho},
   author={V\'{a}rilly-Alvarado, Anthony},
   title={Kodaira dimension of moduli of special cubic fourfolds},
   journal={J. Reine Angew. Math.},
   volume={752},
   date={2019},
   pages={265--300},
   issn={0075-4102},

}

	\bib{Zarhin}{article}{
   author={Zarhin, Yuri G.},
   title={Algebraic cycles over cubic fourfolds},
   language={English, with Italian summary},
   journal={Boll. Un. Mat. Ital. B (7)},
   volume={4},
   date={1990},
   number={4},
   pages={833--847},
}
	
	\end{biblist}
	\end{bibdiv}
		
\end{document}